\documentclass[12pt, reqno]{amsart}

\usepackage{tikz, pgf-umlsd} 
\usetikzlibrary{shapes,arrows}

\usepackage[margin=0.9in]{geometry}

\addtolength{\topmargin}{0.2in}
\addtolength{\textheight}{-0.2in}

\usepackage[pagebackref]{hyperref}
\usepackage[english]{babel}
\usepackage{amsrefs}
\usepackage{amsmath,amssymb,amsfonts,amsthm,enumerate}
\usepackage{url}
\usepackage{graphicx,epstopdf,color}

\numberwithin{equation}{section}

\newtheorem{theorem}{Theorem}[section]
\newtheorem{lemma}[theorem]{Lemma}
\newtheorem{definition}[theorem]{Definition}
\newtheorem{remark}[theorem]{Remark}
\newtheorem{proposition}[theorem]{Proposition}
\newtheorem{corollary}[theorem]{Corollary}
\newtheorem{openpb}[theorem]{Open problem}

\newcommand{\R}{\mathbb{R}}

\newcommand{\N}{\mathbb{N}}

\renewcommand{\leq}{\leqslant}
\renewcommand{\le}{\leqslant}
\renewcommand{\geq}{\geqslant}
\renewcommand{\ge}{\geqslant}

\renewcommand{\epsilon}{\varepsilon}

\newcommand{\e}{\varepsilon}

\newcommand{\CUNO}{C_3}
\newcommand{\CDUE}{C_2}
\newcommand{\CDUEDUE}{C_1}

\newcommand{\CALERRE}{E}

\usepackage{mathtools}
\mathtoolsset{showonlyrefs}


\author[X. Cabr\'e]{Xavier Cabr\'e}
\address{Xavier Cabr\'e \textsuperscript{1,2}
\newline
\textsuperscript{1} ICREA, Pg. Lluis Companys 23, 08010 Barcelona, Spain
\newline
\textsuperscript{2} Universitat Polit\`ecnica de Catalunya,  IMTech and Departament de Matem\`{a}tiques, Diagonal 647, 08028 Barcelona, Spain}
\email{xavier.cabre@upc.edu}

\author[S. Dipierro]{Serena Dipierro}
\address{Serena Dipierro \textsuperscript{3}
\newline
\textsuperscript{3}
University of Western Australia,
Department of Mathematics and Statistics,
35 Stirling Highway,
Crawley, Perth,
WA 6009, Australia}
\email{serena.dipierro@uwa.edu.au}

\author[E. Valdinoci]{Enrico Valdinoci}
\address{Enrico Valdinoci \textsuperscript{3}
\newline
\textsuperscript{3}
University of Western Australia,
Department of Mathematics and Statistics,
35 Stirling Highway,
Crawley, Perth WA 6009, Australia}
\email{enrico.valdinoci@uwa.edu.au}

\title[The Bernstein technique for integro-differential equations]
{The Bernstein technique \\ for integro-differential equations}

\thanks{The first author is supported by grants MTM2017-84214-C2-1-P and RED2018-102650-T funded by MCIN/AEI/10.13039/501100011033 and by ``ERDF A way of making Europe''. He is member of the research group 2017SGR1392 and of the Barcelona Graduate School of Mathematics. 
The second author is supported by
the Australian Research Council DECRA DE180100957
``PDEs, free boundaries and applications''. 
The third author is supported by
the Australian Laureate Fellowship
FL190100081
``Minimal surfaces, free boundaries and partial differential equations''.
The second and third authors are members of INdAM and AustMS}

\begin{document}

\begin{abstract}
We extend the classical Bernstein technique to the setting of integro-differential operators. As a consequence, we provide first and one-sided second derivative estimates for solutions to fractional equations, including some convex fully nonlinear equations of order smaller than two ---for which we prove uniform estimates as their order approaches two. Our method is robust enough to be
applied to some Pucci-type extremal equations and to obstacle problems for fractional operators, although several of the results are new even in the linear case.
We also raise some intriguing open questions, one of them concerning the ``pure'' linear fractional Laplacian, another one
being the validity of one-sided second derivative estimates for Pucci-type convex equations associated to linear operators with general kernels.
\end{abstract}

\keywords{Bernstein's technique, fully nonlinear
nonlocal operators, fractional obstacle problems, first and second derivative estimates}
\subjclass[2010]{35R11, 47G20, 35B65, 35B50}

\maketitle

\section{Introduction}
 
The Bernstein technique is a powerful tool to establish derivative estimates, through the use of auxiliary functions and the maximum principle, for solutions of elliptic equations. The goal of this paper is to extend this method to the setting of fractional equations. Up to our knowledge, this is done in the current work for the first time in a systematic way,
even for the fractional Laplacian.
The technique will allow us to establish first and one-sided second derivative estimates for a large class of integro-differential equations, including some fully nonlinear equations of order smaller than two. 

Fractional diffusions arise in classical models, notably in the description of water waves, of atom dislocations in crystals, and in the displacement of
an elastic membrane on a thin obstacle. These problems can be efficiently attacked
by transforming them into a fractional setting on a lower dimensional (or boundary) object.
More recent models include energy transfer in nanotubes,
plasma physics, price oscillations in stock markets, and biological dispersals
in sparse environments ---see e.g.~\cite{MR3469920} and references
therein.

The Bernstein technique ---as introduced in the local case by Bernstein
himself~\cite{MR1511375, MR1511579}--- 
relies on considering some auxiliary functions 
which involve the solution, its derivatives, and suitable cutoff functions. In view of certain equations (inequalities, rather) satisfied by the auxiliary functions and thanks to the maximum principle, they allow to estimate higher derivatives of the solution in terms of lower order ones,
by paying a price in the size of the reference domain. 

Let us recall this procedure with the simplest example in local equations, the Laplace operator. This will serve as a preparation for the nonlocal framework.
Given two functions~$u$ and~$\eta$ ($u$ must be thought as the solution of an equation, while $\eta$ will be a cutoff), both smooth enough, consider the auxiliary function
\begin{equation}\label{FACIL0}
\varphi:= \eta^2(\partial_e u)^2+\sigma u^2,
\end{equation} 
where $e\in\R^n$ is a unit vector, $|e|=1$, and $\sigma>0$ is a constant. We have that
\begin{equation}\label{PREFA}\begin{split}
-\Delta\varphi \;&=\; 
-2|\nabla \eta|^2 (\partial_e u)^2-
2\eta \Delta\eta (\partial_e u)^2-
8\eta \nabla\eta \cdot \nabla (\partial_e u)\partial_e u\\
&\qquad-
2\eta^2|\nabla\partial_e u|^2-
2\eta^2\partial_e u\, \partial_e \Delta u
-2 \sigma |\nabla u|^2-
2 \sigma u \Delta u.\end{split}\end{equation}
Since, by the Cauchy-Schwarz inequality
$$ \big|8\eta \nabla\eta \cdot \nabla(\partial_e u)\partial_e u\big|\le
2\eta^2\,|\nabla\partial_e u|^2
+8|\nabla\eta|^2 (\partial_e u)^2,
$$
equation~\eqref{PREFA} yields
\begin{equation*}\begin{split}
-\Delta\varphi \;&\le\;
6|\nabla \eta|^2 (\partial_e u)^2-
2\eta \Delta\eta (\partial_e u)^2-2 \sigma |\nabla u|^2
+2\eta^2\partial_e u\, (-\Delta )\partial_eu
+2 \sigma u (-\Delta u).
\end{split}\end{equation*}
In particular, by choosing~$\sigma\ge C_n \|\eta\|^2_{C^2(\R^n)}$ for an appropriate constant $C_n$ depending on $n$ (more specifically, on the precise way in which the $C^2$-norm of a function in $\R^{n}$ is defined),
we obtain that
\begin{equation}\label{FACIL}\begin{split}
-\Delta\left( \eta^2(\partial_e u)^2+\sigma u^2\right) \;\le\;
2\eta^2\partial_e u\, (-\Delta )\partial_eu
+2 \sigma u \,(-\Delta u) \qquad \text{ if } \sigma\ge C_n \|\eta\|^2_{C^2(\R^n)}.
\end{split}\end{equation}
This is a ``clean'', key inequality satisfied by any function $u$ (not necessarily a solution of an equation). 

Now, if we assume the function~$u$
to be harmonic, say~$-\Delta u=0$ in~$B_1\subset\R^n$, we deduce from~\eqref{FACIL}
that~$-\Delta\varphi\le0$ in~$B_1$. If, in addition, we take the function~$\eta\in C^\infty_c(B_1)$ to have compact support in $B_1$ and to satisfy~$\eta=1$ in~$B_{1/2}$,
the maximum principle for subharmonic functions ensures that~$\varphi$
attains its maximum along~$\partial B_1$. As a consequence,
$$ 
\sup_{B_{1/2}} (\partial_e u)^2\le
\sup_{B_{1/2}}\varphi\le
\sup_{B_1}\varphi=\sup_{\partial B_1}\varphi= \sigma\sup_{\partial B_1}u^2\le \sigma \|u\|_{L^\infty(B_1)}^2,$$
thus yielding an explicit interior gradient estimate for the solution~$u$. As we will see, simple variations
of this method, in which higher derivatives are taken into account
within the auxiliary function, lead to higher order estimates as well.

In spite of its rather elementary flavor, the Bernstein method is a powerful nonvariational tool that finds applications in several contexts and
for a large number of equations. The quadratic auxiliary function above (which is the one that we will consider within the nonlocal setting) finds applications even for second order fully nonlinear uniformly elliptic equations; see the monograph \cite[Chapter 9]{CC}. 
More sophisticated auxiliary functions (with other nonlinear dependences on $u$ and $\partial_{e}u$) lead to gradient estimates for the prescribed mean curvature equation; see e.g. \cite{MR1617971}
and references therein.

Instead, to the best of our knowledge, the Bernstein method  for the quadratic auxiliary function~\eqref{FACIL0} has not been yet studied in relation with
fractional and integro-differential equations, not even for the fractional Laplacian. In this respect, the closest work that we could find is~\cite{MR2405856}, by Biswas, Jakobsen, and Karlsen, which concerns an integro-differential equation  of parabolic type posed in the whole space. Here the Bernstein technique is applied to a quadratic auxiliary function depending
on incremental quotients (but not containing the cut-off function $\eta$) to obtain suitable
Lipschitz bounds, which are then exploited to deal with the convergence
of the discrete scheme under consideration.

In accordance with our proofs and results, we must merge
the operators that we treat into two
categories. The first one consists of
equations built from {\em operators that admit a local extension in one more
dimension}, as it is the case of the fractional Laplacian.
Our second category of equations consists of linear {\em integro-differential 
operators with general kernels}, as well as fully nonlinear operators 
built from them.

In the next subsections, we describe in detail the
framework and results of our work.

\subsection{Pucci-type equations in the presence of extensions}\label{sub:prima}
We start dealing with the case of Pucci-type equations associated to affine transformations of the fractional Laplacian with elliptic matrices. 
To built them, given constants $0<\lambda\leq \Lambda$ we let
\begin{equation}\label{ELLMA}
{\mathcal{A}}={\mathcal{A}}_{\lambda,\Lambda}\text{ be the set of $n$-dimensional symmetric
matrices with eigenvalues in }[\lambda,\Lambda].
\end{equation}
Now, for a given~$s\in(0,1)$, we define the operator
\begin{equation} \label{TUTTIS}\begin{split}
{\mathcal{L}}_A u(x)\,&:= c_{n,s} \, {\rm P.V.}\,
\int_{\R^n}\frac{u(x)-u(y)}{|A(x-y)|^{n+2s}}\, dy\\&
:=c_{n,s} \, \lim_{\e\searrow0}\,
\int_{\R^n\setminus B_\e(x)}\frac{u(x)-u(y)}{|A(x-y)|^{n+2s}}\, dy,
\end{split}\end{equation}
where $c_{n,s}>0$ is a suitable normalizing
constant which makes that, when $A={\rm Id}$ is the identity, ${\mathcal{L}}_{\rm Id}$ becomes a fraction of the classical Laplacian, that is,
$$
{\mathcal{L}}_{\rm Id} = (-\Delta)^{s}.
$$
The above limit is well defined, and finite, whenever $u$ is a $W^{2,\infty}=C^{1,1}$ function (locally) which is bounded in all of $\R^n$.
To ease the notation, the principal value ${\rm P.V.}$ will be omitted from now on.

We can now consider the maximal operator equation
\begin{equation}\label{MUCCI:EQ}
{\mathcal{M}}_{{\mathcal{A}}} u(x):=
\sup_{A\in {\mathcal{A}}}\Big( {\mathcal{L}}_A u(x)-g_A(x)\Big)= 0\quad {\mbox{for all }}x\in B_1,
\end{equation}
where $g_A$ 
is a given continuous function in the unit ball~$B_1\subset\R^n$,
for every~$A\in {{\mathcal{A}}}$.
We will assume continuity of~$g_A$
with respect to the parameters~$A\in{{\mathcal{A}}}$:
\begin{equation}\label{Gcon}
{\mbox{if~$A_k\to A$ as~$k\to+\infty$, then }}\lim_{k\to+\infty}
g_{A_k}(x)= g_A(x) {\mbox{ for all }}x\in B_1.
\end{equation}
Some existence and regularity results for \eqref{MUCCI:EQ} will be described in Appendix~\ref{sec:regularity-new}.

By developing a Bernstein technique in this framework, we 
establish first and one-sided second derivative bounds
for solutions of~\eqref{MUCCI:EQ}. 
Our estimates are uniform as the order of the operators converges to two, providing a unified theory up to the local case, with uniform constants in the bounds. In this respect,
note that the operators~${\mathcal{L}}_A$ 
recover, in the limit~$s\nearrow1$, every 
second order linear elliptic operator in nondivergence with constant coefficients 
(see~Section~6 in~\cite{caff-silv} and Remark~\ref{affine-indefinite} below).

The following is our first result. Here we need the smooth function $u$ to belong to $W^{2,\infty}(\R^n)$, since, within the proofs, the fractional operators will act on
derivatives of $u$ up to order two; in this way, second derivatives will be smooth functions bounded in all space.
 
\begin{theorem}\label{MUCCI}
Given $s\in (0,1)$, $0<\lambda\leq\Lambda$, and
functions $g_{A}\in W^{1,\infty}( B_1)$ for $A\in {\mathcal{A}}$, assume~\eqref{Gcon} and let~$u \in C^\infty (\R^n)\cap W^{2,\infty} (\R^n)$ be a solution of~\eqref{MUCCI:EQ}.

Then, 
\begin{equation}\label{9ixsjXVaBAM-1}
\sup_{B_{1/2}} |\nabla u|
\le {C}\,\Big( \| u\|_{L^\infty(\R^n)}+
\sup_{A\in{{\mathcal{A}}}}\|g_A\|_{W^{1,\infty}(B_1)} 
\Big)
\end{equation}
for some constant~$C$ depending only
on~$n$, $\lambda$, and~$\Lambda$.

If, in addition,  $g_A \in W^{2,\infty}(B_1)$
for all~$A\in{\mathcal{A}}$, then we have
\begin{equation}\label{9ixsjXVaBAM-2}\begin{split}&
\sup_{B_{1/2}} \partial^2_e u\le C\,
\Big(
\| u\|_{L^\infty(\R^n)}+\sup_{A\in{\mathcal{A}}}\|g_A\|_{W^{2,\infty}(B_1)}
\Big)
\end{split}\end{equation}
for every~$e\in\R^n$ with~$|e|=1$,
where~$C$ is as before.\footnote{As it will be apparent from the proof, estimates~\eqref{9ixsjXVaBAM-1}
and~\eqref{9ixsjXVaBAM-2} can be stated in a more precise way in relation with their dependence on the functions $g_A$. This is explained in Remark~\ref{Rk_on_postive_parts}.}
\end{theorem}

Our one-sided second derivative bound (also called semiconcavity bound)\footnote{As customary, we say that a function~$u$ is semiconcave
if there exists~$C\ge0$ such that the function~$u(x)-C|x|^2$ is concave.} is new and somehow surprising. Indeed, since the order of the operator is smaller than two, one should not expect a regularity theory up to the second order.\footnote{The best regularity theory available for this equation arrives at~$C^{1+\varepsilon}\cup C^{2s+\varepsilon}$,
with~$\max\{1+\varepsilon,2s+\varepsilon\}<2$; see \cite[Theorem 13.1] {caff-silv} and \cite[Theorem 1.1] {caff-silv-2}, respectively, and Appendix~\ref{sec:regularity-new} below.}
In this respect, some other one-sided second derivative estimates have been previously proved for fractional problems. This has been achieved, in the context of  the thin obstacle problem, by Athanasopoulos and Caffarelli~\cite{AC04}. In Corollary~\ref{92OBS} we will address their result, which uses the Bernstein technique but with a different, less flexible, auxiliary function than in the local theory or in the current work. Their auxiliary function is linear in the second derivatives, while ours is quadratic and thus, as we will see, it has already allowed for applications to more general situations in thin obstacle problems. Another semiconcavity estimate is that of Mou \cite{Mou},
which applies to some integro-differential equations under certain (not so simple) hypotheses. The proof in \cite{Mou} does not rely on the Bernstein technique, but on the Ishii-Lions method.

First derivative bounds have already been proved for large classes of fully nonlinear integro-differential equations, using different methods than ours. Among other papers, we point up the works by Jakobsen and Karlsen \cite{JakKar}, by Caffarelli and Silvestre~\cite{caff-silv}, and by Barles, Chasseigne, Ciomaga, and Imbert~\cite{MR2911421}.
The seminal work \cite{caff-silv} establishes a $C^{1+\alpha}$ bound for a large class of fully nonlinear integro-differential equations that includes Isaacs-type equations made from uniformly elliptic linear operators with general kernels in the class ${\mathcal{L}}_{1}$. Their proof relies on ABP-type and Harnack inequalities, and thus it is an extension of the Krylov-Safonov local theory. Instead, the work~\cite{MR2911421} (as \cite{JakKar} did before)
relies on the Ishii-Lions method (where an auxiliary function with doubled variables is used) and leads to a Lipschitz bound. It requires H\"older continuous coefficients but allows for weaker ellipticity assumptions. Thus, we are presenting here a third approach that applies to some new equations but not to all of 
the equations in the papers mentioned above, since we only cover convex equations.

We point out that the results in both \cite{caff-silv} and \cite{MR2911421} apply to viscosity solutions. Extending our method to the viscosity framework will require some new ideas that we have not found implemented in the literature, even in the local case.\footnote{Recall that in the monograph \cite{CC}, for instance, the Bernstein technique is carried out only for smooth solutions.} 
We also recall that solutions of equation~\eqref{MUCCI:EQ} are  not, in general, smooth;\footnote{In any case, \label{FOOTNOTE4}
notice that the smooth setting in  the gradient estimate of Theorem~\ref{MUCCI} applies to a large number of equations. Indeed, given any smooth function~$u\in W^{2,\infty} (\R^n)$ we may define~$f:= \sup_{A\in {\mathcal{A}}} {\mathcal{L}}_A u$ and $g_A=f$ for all $A\in  {\mathcal{A}}$. Then, $u$ solves equation \eqref{MUCCI:EQ}. In addition, since $u$ is smooth and has bounded derivatives, one can check  that $g_A=f$ is Lipschitz ---which suffices for the validity of the first derivative estimate.}
see Appendix~\ref{sec:regularity-new}.

To prove Theorem \ref{MUCCI}, we first need to extend the Bernstein technique to the simplest linear operator: the fractional Laplacian $(-\Delta)^s$ with $0<s<1$, as defined above. The computations \eqref{FACIL0}-\eqref{FACIL} for the classical Laplacian will easily carry over the extension operator for the fractional Laplacian, leading to the following analogue of \eqref{FACIL}. Note that the result is uniform as $s$ tends to 1. To guarantee that the fractional Laplacian is well defined when acting on a smooth function $u$ and also on the auxiliary functions built from its first derivatives, we will assume that both $u$ and $\nabla u$ are bounded in all of $\R^n$, that is, $u\in W^{1,\infty}(\R^n)$. 

\begin{proposition}\label{Berns-frac}
Let~$s\in (0,1)$, $u \in C^\infty (\R^n)\cap W^{1,\infty} (\R^n)$,
$\eta \in C^\infty (\R^n)\cap W^{2,\infty} (\R^n)$,
$\sigma>0$, and $e\in\R^{n}$ with $|e|=1$.
Then, we have
\begin{equation}\label{VE:FRA-new}\begin{split}
(-\Delta)^s\left( 
\eta^2(\partial_e u)^2+\sigma u^2\right)\;\le\;
2\eta^2\partial_e u\,(- \Delta)^s \partial_eu
+2 \sigma u\, (- \Delta)^s u\qquad \text{ if } \sigma\ge \sigma_0,
\end{split}\end{equation}
everywhere  in all of $\R^n$, for some constant $\sigma_0$ depending only on $n$ and $\Vert\eta\Vert_{C^2(\R^n)}$ ---and, in particular, independent of~$s$.
\end{proposition}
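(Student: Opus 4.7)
The plan is to lift matters to the Caffarelli--Silvestre extension, where the argument becomes an (almost) verbatim translation of the local computation~\eqref{PREFA}--\eqref{FACIL}. Set $a := 1 - 2s$ and let $U \in C^\infty(\R^n \times (0, +\infty))$ be the $L_a$-harmonic extension of $u$, solving $L_a U := \mathrm{div}(y^a \nabla U) = 0$ in $\R^{n+1}_+$ (all derivatives in the $(x, y)$ variables) with trace $U(\cdot, 0) = u$, and satisfying the Dirichlet-to-Neumann identity $(-\Delta)^s u(x) = -d_s \lim_{y \to 0^+} y^a \partial_y U(x, y)$ for a positive constant $d_s$. Regarding $\eta$ as a $y$-independent function on $\R^{n+1}$ and defining
\[
\Psi(x, y) \,:=\, \eta(x)^2 (\partial_e U(x, y))^2 + \sigma\, U(x, y)^2,
\]
the trace $\phi := \Psi(\cdot, 0)$ coincides with the argument of $(-\Delta)^s$ on the left-hand side of~\eqref{VE:FRA-new}.

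Next I would compute $L_a \Psi$ via the Leibniz identity $L_a(fg) = f L_a g + g L_a f + 2 y^a \nabla f \cdot \nabla g$. Since $L_a U = 0$ and the tangential derivative $\partial_e$ commutes with $L_a$, one has $L_a(U^2) = 2 y^a |\nabla U|^2$ and $L_a((\partial_e U)^2) = 2 y^a |\nabla \partial_e U|^2$, and the expansion then parallels~\eqref{PREFA}:
\begin{equation*}
\begin{split}
L_a \Psi \;&=\; 2 y^a \eta^2 |\nabla \partial_e U|^2 + 2 y^a (\partial_e U)^2 (|\nabla \eta|^2 + \eta\, \Delta \eta) \\
&\quad + 8 y^a \eta (\partial_e U) \nabla_x \eta \cdot \nabla_x (\partial_e U) + 2 \sigma y^a |\nabla U|^2.
\end{split}
\end{equation*}
Applying Cauchy--Schwarz to the mixed term exactly as in the passage from~\eqref{PREFA} to~\eqref{FACIL}, and using $(\partial_e U)^2 \le |\nabla U|^2$, I obtain
\[
L_a \Psi \;\ge\; 2 y^a |\nabla U|^2 \bigl( \sigma - 3 |\nabla \eta|^2 - |\eta\, \Delta \eta| \bigr) \;\ge\; 0
\]
whenever $\sigma \ge \sigma_0$, for some $\sigma_0$ depending only on $n$ and $\|\eta\|_{C^2(\R^n)}$. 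Crucially, $\sigma_0$ is independent of $s$, because no $s$-dependent constant enters either the Leibniz rule or the Cauchy--Schwarz estimate.

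The final step transfers this bulk subsolution property to a pointwise bound on $(-\Delta)^s \phi$. Under the hypotheses $u \in W^{1,\infty}(\R^n)$ and $\eta \in W^{2,\infty}(\R^n)$, the Poisson representation of $U$ yields $U, \partial_e U \in L^\infty(\R^{n+1}_+)$, so $\Psi$ is bounded; the same holds for the true $L_a$-harmonic extension $\tilde \Phi$ of $\phi$. The maximum principle for bounded $L_a$-subsolutions in the half-space, applied to $W := \Psi - \tilde \Phi$ (so that $L_a W \ge 0$ and $W|_{y = 0} = 0$), gives $\Psi \le \tilde \Phi$ in $\R^{n+1}_+$, with equality on $\{y = 0\}$. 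Hence $\partial_y (\Psi - \tilde \Phi)|_{y = 0^+} \le 0$, and multiplying by $-d_s y^a$ (which reverses the inequality exactly once) and letting $y \to 0^+$ produces
\[
(-\Delta)^s \phi(x) \,\le\, -d_s \lim_{y \to 0^+} y^a \partial_y \Psi(x, y) \,=\, 2 \eta^2 \partial_e u\, (-\Delta)^s \partial_e u + 2 \sigma\, u\, (-\Delta)^s u,
\]
where the last equality follows from direct differentiation of $\Psi$ together with the Dirichlet-to-Neumann identity applied to $U$ and to $\partial_e U$ (note that $\partial_e U$ is itself the $L_a$-harmonic extension of $\partial_e u$, since $L_a$ commutes with tangential differentiation).

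The bulk algebra is essentially a carbon copy of~\eqref{PREFA}--\eqref{FACIL} and presents no real difficulty. The only delicate ingredient is therefore the boundary step: one needs the maximum principle for bounded $L_a$-subsolutions in the half-space, which is standard in the Caffarelli--Silvestre framework thanks to the $A_2$-Muckenhoupt character of the weight $y^a$, and one must orchestrate the sign conventions so that a bulk $L_a$-subsolution produces an upper (rather than lower) bound on $(-\Delta)^s \phi$ at the trace.
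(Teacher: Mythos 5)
Your proof is correct and takes essentially the same route as the paper: lift to the Caffarelli--Silvestre extension, verify that the lifted auxiliary function is a bulk $L_a$-subsolution (matching the computation in Lemma~\ref{:2BIS} and Corollary~\ref{COR:A:ZZ}), compare it to the $L_a$-harmonic extension of its own trace via the half-space maximum principle (Lemma~\ref{0okmokm6}), and read the inequality off from the weighted Neumann traces. The only cosmetic differences are your choice of sign in $L_a$ (the paper uses $L_a U = -\mathrm{div}(y^a\nabla U)$, so its subsolution condition is $L_a\Psi\le 0$) and that the precise statement $\lim_{y\to 0^+} y^a\partial_y(\Psi-\tilde\Phi)\le 0$ is the one to invoke rather than $\partial_y(\Psi-\tilde\Phi)|_{y=0^+}\le 0$, which need not exist unweighted when $a\neq 0$.
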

 
The proof of the first derivative estimate in Theorem~\ref{MUCCI}
will follow from inequality~\eqref{VE:FRA-new} by
choosing an appropriate cutoff function~$\eta$, after a change of variables to replace~$(-\Delta)^s$ by the operators ${\mathcal{L}}_A$. 

Our method to establish one-sided second derivative bounds will be similar. For this, in~\eqref{VE:FRA-new} we first need to replace $u$ by $v=\partial_e u$, but since we only expect a one-sided second derivative bound from above, we must consider instead the auxiliary function
\begin{equation}\label{eqn-v-pos}
\eta^2(\partial_{e} v)_{+}^2+\sigma v^2
\end{equation}
involving a positive part,\footnote{In the local case there is no need to consider positive parts; see~\cite[Chapter 9]{CC}. It is enough to apply the maximum principle in a ball intersected with 
the set where $\partial_{e} v=\partial_{e}^{2} u$ is positive, and then check that
the auxiliary function is controlled on the boundary of such set. This approach does not work in the nonlocal framework due to the influence of the exterior datum.} where $v=\partial_{e}u$. 
The analogue of inequality \eqref{VE:FRA-new} for the auxiliary function \eqref{eqn-v-pos} is the content of Proposition~\ref{PROP-66glo}. 

For the operators with general kernels treated in the
next subsection, we will prove a rather delicate extension of Proposition~\ref{Berns-frac}. Instead, an analogue inequality for the auxiliary function~\eqref{eqn-v-pos} is unknown; see Open problem~\ref{OP2BIS}.

A similar result to Theorem~\ref{MUCCI} but dealing with linear and with convex fully nonlinear operators of indefinite order will be presented in Subsection~\ref{SEct:EXTE}. 

\subsection{Pucci-type equations for general integro-differential operators}\label{sec:seconda}

In this paper we also take into account the case of operators with more 
general kernels, more precisely, kernels which are not pure powers, neither 
rotationally invariant. This setting
takes into account anisotropic environments.

Let~$K:\R^n\to (0,+\infty]$ satisfy
\begin{equation}\label{EVEN}
K(z)=K(-z)\qquad{\mbox{ for all }} z\in\R^n\setminus\{0\}
\end{equation}
and, for some~$s\in(0,1)$,
\begin{equation}\label{KLIM}
\frac{\CDUEDUE\,s(1-s)}{|z|^{n+2s}}\le
K(z)\le \frac{\CDUE \,s(1-s)}{|z|^{n+2s}}\qquad{\mbox{ for all }}z\in\R^n\setminus\{0\},\end{equation}
where~$0<\CDUEDUE\leq\CDUE$ are given constants. In our main results we will also assume~$K$ to be~$C^{2}$ in~$\R^n\setminus\{0\}$ and to satisfy
\begin{equation}\label{KC1}
|z|\,| \nabla K(z)|
+ |z|^2\,|D^2 K(z)|
\le \CUNO \, K(z)\qquad{\mbox{ for all }}\;z\in\R^n\setminus\{0\},\end{equation}
for some constant~$\CUNO>0$. This is the class ${\mathcal{L}}_{2}$ of kernels introduced in \cite{caff-silv}.
We consider the linear operator
\begin{equation} \label{OP:L:BIS}
{\mathcal{L}}_K u(x)\,:=\int_{\R^n}\big( u(x)-u(y)\big)\,K(x-y)\,dy
\end{equation}
defined, as before, in the principal value sense. The operator is well defined on $W^{2,\infty}$ functions which are bounded in all of $\R^n$. When $K(z)=c_{n,s} |z|^{-n-2s}$, it is the fractional Laplacian.

The assumptions in~\eqref{EVEN}-\eqref{KC1}
are satisfied by the class of general stable symmetric
operators, where the kernels are
given by
\begin{equation}\label{SETA} K(z) :=\frac1{2 \,|z|^{n+2s}}\left(
a\left(\frac{z}{|z|}\right)+
a\left(-\frac{z}{|z|}\right) \right),\end{equation}
under appropriate hypotheses on the positive function $a$. See, for instance, (1.3) in~\cite{MR3482695}.

Integro-differential operators with such kernels naturally arise in the L\'evy-Khintchine probabilistic formula, to take into
account Poisson processes with jumps; see e.g. Section~2.2 in~\cite{MR854867}.
They possess applications in several fields; see e.g. Sections~1.2 and~1.3
in~\cite{MR854867}.
In spite of many similarities with the case of the fractional Laplacian, they also
present some important differences, in terms of regularity results, with respect to
the fractional Laplacian.
 
We establish first derivative
estimates for fully nonlinear equations involving general fractional kernels. To state our result for maximal type operators (other fully nonlinear equations are treated in next subsection), 
we consider a compact set of indexes~${\mathcal{B}}$, as well as
kernels~$K_B$ and  continuous functions~$g_B$ in $B_1$ for $B\in
{\mathcal{B}}$, satisfying
\begin{equation}\label{Gcon-BIS}{\mbox{if~$B_j\to B$ as~$j\to+\infty$, then }}\lim_{j\to+\infty}g_{B_j}(x)= g_B(x) {\mbox{ for all }}x\in B_1.
\end{equation}

\begin{theorem}\label{PUCCI TYPE BIS}
Let ${\mathcal{B}}$ be a compact set and $\{ K_B\}_{B\in{\mathcal{B}}}$ be kernels satisfying~\eqref{EVEN},
\eqref{KLIM}, and~\eqref{KC1} $($all with the same structural constants~$s$, $\CDUEDUE$,
$\CDUE$, and~$\CUNO)$. For~$B\in{\mathcal{B}}$,
let~$g _B\in W^{1,\infty}( B_1)$ and assume that~\eqref{Gcon-BIS}
is satisfied.

Let~$u\in C^\infty(\R^{n})\cap W^{1,\infty}(\R^n)$ be a
solution of
\begin{equation}\label{1.23BIS} \sup_{B\in{\mathcal{B}}}\Big(
{\mathcal{L}}_{K_B}u(x)-g_B(x)\Big)
= 0\quad {\mbox{for all }}x\in B_1.\end{equation}

Then,
\begin{equation*}
\sup_{B_{1/2}} |\nabla u|\le 
C\,\Big( \| u\|_{L^\infty(\R^n)}+
\sup_{B\in{\mathcal{B}}}\|g_B\|_{W^{1,\infty}( B_1)}
\Big)
\end{equation*}
for some constant~$C$ depending only on~$n$, $s$, $\CDUEDUE$,
$\CDUE$, and~$\CUNO$.
\end{theorem}

Recall that this result applies to a large number of equations, even if it assumes $u$ to be smooth; see the comments in footnote~\ref{FOOTNOTE4}.

To prove Theorem~\ref{PUCCI TYPE BIS}, no extension technique is available. Therefore, the
Bernstein method must rely solely on integral computations made 
``downstairs'', that is, in~$\R^{n}$. This turns out to be a very 
delicate issue. In fact, the validity of the key inequality 
\eqref{VE:FRA-new} for the fractional Laplacian remains unknown in the 
case of the operator ${\mathcal{L}}_{K}$ (see Open problems~\ref{OP1} and \ref{OP2} 
below). Our main contribution is to establish the inequality with an error 
term $E$ which will be absorbable (by scaling properties) at the end of 
the proof of first derivative estimates.
To establish the inequality with an error term, we will use the following criterium.

\begin{proposition} \label{SUPERT-Intro}
Let~$K$ satisfy~\eqref{EVEN}
and~\eqref{KLIM}, and let ${\mathcal{L}}_K$ be defined by~\eqref{OP:L:BIS}. Given a function $u\in C^\infty(\R^{n})\cap W^{1,\infty}(\R^n)$,
$\eta\in C^\infty(\R^n)\cap L^{\infty}(\R^n)$,
$e\in\R^{n}$ with $|e|=1$, and $\sigma>0$, consider
\begin{equation}\label{COM0129pq} \varphi:=\eta^2 (\partial_e u)^2 +\sigma u^2.\end{equation}

Then, given $\CALERRE\in\R$, the inequality
\begin{equation}\label{MAY18-1}
{\mathcal{L}}_K \varphi\le 2\eta^2\,\partial_e u\,{\mathcal{L}}_K \partial_e u+2\sigma u\,{\mathcal{L}}_K u+\CALERRE
\end{equation}
holds at a point~$x\in\R^n$ if and only if
\begin{equation}\label{MAY18-2}
\begin{split}&
2\int_{\R^n}\eta(x)\,\big( \eta(x)-\eta(y)\big)\,\partial_e u(x)\,\partial_e u(y)\,K(x-y)\,dy
\\ &\qquad\le\,\int_{\R^n}\big|\eta(x)\,\partial_e u(x)-\eta(y)\,\partial_e u(y)\big|^2\,K(x-y)\,dy
\\&\qquad\qquad+\sigma\int_{\R^n}\big|u(x)- u(y)\big|^2\,K(x-y)\,dy
+\CALERRE.\end{split}\end{equation}
\end{proposition}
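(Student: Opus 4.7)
The plan is to establish the equivalence by direct computation: since the statement is an iff with an arbitrary constant $\CALERRE\in\R$, it suffices to derive an exact pointwise identity for the difference $\mathcal{L}_K\varphi(x)-2\eta^2\partial_e u\,\mathcal{L}_K\partial_e u - 2\sigma u\,\mathcal{L}_K u$ and then read \eqref{MAY18-2} off from it.

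First I would set $w:=\eta\,\partial_e u$, so that $\varphi = w^2+\sigma u^2$. The starting point is the elementary identity $a^2-b^2 = 2a(a-b)-(a-b)^2$, applied pointwise with $a=f(x)$, $b=f(y)$ and then integrated against $K(x-y)\,dy$. Applied to $f=u$, this yields
\[
\sigma\,\mathcal{L}_K(u^2)(x)=2\sigma u(x)\,\mathcal{L}_K u(x)\,-\,\sigma\int_{\R^n}\bigl(u(x)-u(y)\bigr)^2K(x-y)\,dy,
\]
accounting for one of the two quadratic energy terms on the right-hand side of \eqref{MAY18-2}.

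The essential step is to apply the same identity to $f=w$, at which point the non-commutativity of the product $\eta\,\partial_e u$ under $\mathcal{L}_K$ manifests itself via the ``discrete Leibniz rule''
\[
w(x)-w(y)=\eta(x)\bigl(\partial_e u(x)-\partial_e u(y)\bigr)+\bigl(\eta(x)-\eta(y)\bigr)\partial_e u(y).
\]
Expanding $2w(x)(w(x)-w(y))$ with this decomposition and integrating against $K(x-y)\,dy$ produces
\[
\mathcal{L}_K(w^2)(x)=2\eta(x)^2\partial_e u(x)\,\mathcal{L}_K\partial_e u(x)+2\eta(x)\partial_e u(x)\int_{\R^n}\bigl(\eta(x)-\eta(y)\bigr)\partial_e u(y)\,K(x-y)\,dy-\int_{\R^n}|w(x)-w(y)|^2K(x-y)\,dy,
\]
which is precisely where the cross integral on the left of \eqref{MAY18-2} and the second energy term on its right appear.

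Summing these two identities gives an exact pointwise formula for $\mathcal{L}_K\varphi(x)$, and rearranging shows that \eqref{MAY18-1} holds at $x$ if and only if \eqref{MAY18-2} does. The argument is thus purely algebraic; the only point requiring some care is justifying the convergence of the principal-value integrals and the splittings above, which follows from the evenness hypothesis \eqref{EVEN} together with the bound \eqref{KLIM} and the regularity $u\in C^\infty\cap W^{1,\infty}(\R^n)$, $\eta\in C^\infty\cap L^\infty(\R^n)$ (these assumptions give $w$ locally $C^{1,1}$ and bounded, which renders each quadratic integrand absolutely integrable near the diagonal and at infinity). There is no genuine obstacle here: the content of the proposition is precisely this algebraic rearrangement, which isolates in $\CALERRE$ the only nonlocal term that has no sign, namely the cross term involving $\eta(x)-\eta(y)$.
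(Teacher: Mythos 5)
Your proposal is correct and follows essentially the same route as the paper: both are direct algebraic computations establishing a pointwise identity for $\mathcal{L}_K\varphi - 2\eta^2\partial_e u\,\mathcal{L}_K\partial_e u - 2\sigma u\,\mathcal{L}_K u$ and then reading the equivalence off that identity. The paper proves the identity in one chain of integral manipulations that groups the $u$ terms (which cancel against $\sigma\int|u(x)-u(y)|^2K$) and then applies the polynomial identity $2\eta^2(x)ab-\eta^2(x)a^2-\eta^2(y)b^2=2\eta(x)(\eta(x)-\eta(y))ab-|\eta(x)a-\eta(y)b|^2$; your substitution $w=\eta\,\partial_e u$ together with the discrete Leibniz decomposition of $w(x)-w(y)$ is just a tidier repackaging of that same algebra, and your final identity coincides with the paper's.
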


It is simple to check that all integrals in \eqref{MAY18-2} are well defined in the principal value sense; see the comments following Proposition~\ref{SUPERT}. A useful aspect of \eqref{MAY18-2} is that all terms in its right-hand side are nonnegative, which is not necessarily the case in \eqref{MAY18-1}.

Using this result, we will establish the following key inequality for the operator ${\mathcal{L}}_K$.
It differs from the (still unknown) optimal inequality by a ``small error or remainder''.
Its proof will contain several quite surprising weighted integral cancellations.

\begin{theorem}\label{CONTO TRACCIA}
Let~$K$
satisfy~\eqref{EVEN},
\eqref{KLIM}, and~\eqref{KC1}, and let 
${\mathcal{L}}_K$ be defined by~\eqref{OP:L:BIS}. Let~$u\in C^\infty(\R^n)\cap W^{1,\infty}(\R^n)$ and $\eta\in C^\infty(\R^n)\cap W^{2,\infty}(\R^n)$. 

Then, for every~$\e>0$ there exists a constant~$\sigma_\e>0$
depending only on $\e$, $\|\eta\|_{C^2(\R^n)}$, and
the structural constants~$n$, $s$, $\CDUEDUE$,
$\CDUE$, and~$\CUNO$ in~\eqref{KLIM} and~\eqref{KC1}, such that
\begin{equation}\label{T6}
{\mathcal{L}}_K \big( \eta^2(\partial_e u)^2+\sigma_\e u^2\big)\le
2\eta^2\partial_e u \, {\mathcal{L}}_K\partial_e u+
2\sigma_\e \, u\, {\mathcal{L}}_Ku+\e^2\,\|\partial_e u\|^2_{L^\infty(B_3)}
\quad{\mbox{everywhere in }}B_2,
\end{equation}
for every~$e\in\R^n$ with~$|e|=1$.
\end{theorem}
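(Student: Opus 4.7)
My plan is to reduce \eqref{T6} to its integral reformulation \eqref{MAY18-2} provided by Proposition~\ref{SUPERT-Intro}, applied with the error term $E=\e^2\|\partial_e u\|_{L^\infty(B_3)}^2$. Writing $w:=\partial_e u$, $\delta_y f := f(x)-f(y)$, and $D(x):=\int_{\R^n}(u(x)-u(y))^2 K(x-y)\,dy$, this reduces the theorem to verifying pointwise, at each $x\in B_2$ and for some $\sigma_\e$ chosen large enough depending on $\e$, $\|\eta\|_{C^2(\R^n)}$, and the structural constants in \eqref{KLIM}--\eqref{KC1}, the inequality
\[
2\!\int_{\R^n}\!\eta(x)(\delta_y\eta)\,w(x)w(y)\,K(x-y)\,dy \;\le\; \int_{\R^n}\!|\eta(x)w(x)-\eta(y)w(y)|^2 K\,dy + \sigma_\e D(x) + \e^2\|w\|_{L^\infty(B_3)}^2.
\]

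The first move is the pointwise algebraic identity, verified by direct expansion,
\[
2\eta(x)(\delta_y\eta)\,w(x)w(y) \;=\; |\eta(x)w(x)-\eta(y)w(y)|^2 - \eta(x)^2(\delta_y w)^2 + (\eta(x)^2-\eta(y)^2)\,w(y)^2.
\]
Once integrated against $K$, the first term on the right cancels the perfect-square integral in the reformulated inequality, and the nonpositive term $-\eta(x)^2\!\int(\delta_y w)^2 K\,dy$ becomes a slack available on the left. Using the factorizations $\eta(x)^2-\eta(y)^2=(\eta(x)+\eta(y))\delta_y\eta$ and $w(x)^2-w(y)^2=(w(x)+w(y))\delta_y w$, the remaining left-hand integral decomposes as
\[
\int_{\R^n}(\eta(x)^2-\eta(y)^2)\,w(y)^2\,K\,dy \;=\; w(x)^2\,{\mathcal{L}}_K(\eta^2)(x) \;-\; J(x),
\]
where I set $J(x):=\int_{\R^n}(\eta(x)+\eta(y))(w(x)+w(y))(\delta_y\eta)(\delta_y w)\,K\,dy$. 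A Taylor expansion at $y=x$ exploiting the even symmetry \eqref{EVEN} of $K$ to eliminate the first-order term, together with the tail decay \eqref{KLIM}, yields $|{\mathcal{L}}_K(\eta^2)(x)|\le C\|\eta\|_{C^2}^2$ with $C$ uniform in $s$.

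The heart of the proof is the estimate of $J(x)$. I would apply Young's inequality, pairing $\delta_y w$ against $(\eta(x)+\eta(y))(w(x)+w(y))\delta_y\eta$ with a weight tuned so that the $(\delta_y w)^2$ contribution is absorbed exactly by the $\eta(x)^2\!\int(\delta_y w)^2 K\,dy$ slack, leaving a residual integral of $(\eta(x)+\eta(y))^2(w(x)+w(y))^2(\delta_y\eta)^2 K$. I would split this residual into the near field $|y-x|<\rho$ and the far field $|y-x|\ge\rho$, with $\rho=\rho(\e,s)\le 1$ chosen so that the moment $\int_{|y-x|<\rho}|y-x|^2 K(x-y)\,dy\lesssim s\rho^{2-2s}$ from \eqref{KLIM} is of order $\e^2$; this both produces the $\e^2$ factor and forces $y\in B_\rho(x)\subset B_3$, so that only $\|w\|_{L^\infty(B_3)}^2$ enters the near-field bound. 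The main obstacle is then the far field, where $w(y)$ is \emph{a priori} controlled by $\|w\|_{L^\infty(\R^n)}^2$ rather than $\|w\|_{L^\infty(B_3)}^2$; this is where the ``surprising weighted integral cancellations'' mentioned in the introduction must enter. Using the structural bound $|z||\nabla K(z)|\le C_3 K(z)$ from \eqref{KC1}, one performs integration by parts in $y_e$ to convert tail moments of $w(y)^2$ into tail moments of $(u(x)-u(y))^2$, which are absorbed into $\sigma_\e D(x)$ for $\sigma_\e$ sufficiently large depending on the structural constants. Finally, taking $\sigma_\e$ large enough (using the near-field lower bound $D(x)\gtrsim w(x)^2$) to also swallow the $C\|\eta\|_{C^2}^2 w(x)^2$ piece from $w(x)^2\,{\mathcal{L}}_K(\eta^2)(x)$ closes the argument.
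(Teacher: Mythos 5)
The algebraic identity you write is correct, and reducing via Proposition~\ref{SUPERT-Intro} is the right first step, but the argument has a genuine gap in the way the isolated term $w(x)^2\,{\mathcal{L}}_K(\eta^2)(x)$ is handled. You propose to absorb it into $\sigma_\e D(x)$ via the ``near-field lower bound'' $D(x)\gtrsim w(x)^2$. This bound is false with any constant independent of $u$, which is what the theorem requires. Concretely, take $u(y)=\epsilon\sin(y_1/\epsilon)$, a perfectly admissible $W^{1,\infty}$ function with $|u|\le\epsilon$ and $|\nabla u|\le 1$. Then $w(0)=\partial_1 u(0)=1$, while a direct computation using \eqref{KLIM} gives $D(0)=\int_{\R^n}|u(y)-u(0)|^2K(y)\,dy\lesssim \epsilon^{2-2s}\to 0$ as $\epsilon\searrow 0$. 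So no choice of $\sigma_\e$ depending only on $\e$, $\|\eta\|_{C^2}$ and the structural constants can dominate $w(x)^2\,{\mathcal{L}}_K(\eta^2)(x)$ by $\sigma_\e D(x)+\e^2\|w\|^2_{L^\infty(B_3)}$, and the remaining slack $\eta(x)^2\int(\delta_yw)^2K$ is never invoked for this term (and using it would require a nontrivial interpolation lemma you do not state). A secondary problem: tuning Young's inequality so the $(\delta_yw)^2$ piece of $J(x)$ is ``absorbed exactly'' by the slack forces the dual weight to be of order $\eta(x)^{-2}$, which degenerates as $\eta(x)\to 0$ and is not addressed.

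The paper's proof avoids this term altogether through a different decomposition. After reducing to $x=0$ by translation invariance, it introduces an odd, compactly supported vector field ${\mathcal{Z}}_\delta(y)$ (a truncation of the identity map), whose linear contribution integrates to zero against $K$ by the evenness \eqref{EVEN}, and sets $\varphi_\delta(y)=\eta(0)-\eta(y)+\nabla\eta(0)\cdot{\mathcal{Z}}_\delta(y)$, which vanishes to second order at $y=0$ with controlled $C^2$ norm. The left-hand side is then split as $I_1+I_2+I_3$: the pieces $I_2,I_3$ are supported in $B_1$ and made $O(\e^2)\|w\|^2_{L^\infty(B_1)}$ by shrinking $\delta$, while $I_1$ is integrated by parts \emph{twice} against $\varphi_\delta K$ using the full bound \eqref{KC1} on $\nabla K$ \emph{and} $D^2K$, so that every resulting term ($J_1,J_2$) carries a factor $u(y)-u(0)$ and is controlled by $D(0)$ and the perfect-square integral via Cauchy--Schwarz. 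No term like $w(0)^2\,{\mathcal{L}}_K(\eta^2)(0)$ survives. Your single integration by parts ``to convert tail moments of $w(y)^2$ into tail moments of $(u(x)-u(y))^2$'' gestures at the right mechanism, but it is not carried out, it would in fact need the $D^2K$ bound and not only $\nabla K$, and in any case it does not touch the isolated $w(x)^2\,{\mathcal{L}}_K(\eta^2)(x)$ piece. You should redesign the decomposition so that, after integrating by parts, the only surviving pointwise value of $\nabla u$ at $x$ occurs inside differences of the form $\eta(y)w(y)-\eta(x)w(x)$ or inside compactly supported integrals.
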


Note that the error depends on the $L^{\infty}$-norm of the first derivative in a larger ball than the ball where the inequality is claimed.

Theorem~\ref{CONTO TRACCIA}
will serve as the cornerstone to prove
the first derivative estimates of Theorem~\ref{PUCCI TYPE BIS}.

The following are some intriguing open problems on the inequalities satisfied by the auxiliary functions in the
Bernstein technique. They concern either the fractional Laplacian or operators with general kernels.

\begin{openpb}\label{OP1}{\rm
Our proof of~\eqref{VE:FRA-new} 
heavily relies on the extension method.
It would be very interesting to prove inequality~\eqref{VE:FRA-new}
without using the extension. We only know how to do this when the function $u$ is assumed to be $s$-harmonic; see Lemma~\ref{849027674843hf38} and its proof.\footnote{Note that our proof of Lemma~\ref{849027674843hf38} (which does not use the extension) is uniform as $s$ tends to 1. This is also the case for the extension proof of \eqref{VE:FRA-new}. Instead, Theorem~\ref{CONTO TRACCIA} (and as a consequence, Theorem~\ref{PUCCI TYPE BIS} below) are not uniform. In this respect, it would be very interesting to find a proof of Theorem~\ref{PUCCI TYPE BIS} which is uniform as $s\nearrow 1$.}
In view that we know \eqref{MAY18-2} to be true with $E=0$ when $K(z)=|z|^{-n-2s}$, we still find intriguing not to be able to prove it directly in $\R^{n}$ (for this kernel and with $E=0$) without using the extension. 
Finding such a proof could shed light into the following question.}
\end{openpb}

\begin{openpb}\label{OP2}{\rm
Given a kernel $K$ as above, does Theorem~\ref{CONTO TRACCIA}
hold true without the additional small remainder? That is,
does~\eqref{T6} hold true with~$\e=0$ and $\sigma_\e$ large enough?
}
\end{openpb}

\begin{openpb}\label{OP2BIS}{\rm
We do not know whether one-sided
second derivative bounds hold true for general kernels, in the setting of Theorem~\ref{PUCCI TYPE BIS}. 
Recall that they do hold, by Theorem~\ref{MUCCI}, for the Bellman operator built from affine transformations of the fractional Laplacian.

To establish such a result, one would need to prove an inequality similar to \eqref{T6}, but with~$u$ and~$\partial_{e} u$ replaced by~$v$ and~$(\partial_{e} v)_{+}$, respectively, as explained in \eqref{eqn-v-pos} (here the arbitrary function $v$ plays the role of $\partial_{e} u$). Recall that the positive part comes from the fact that we only expect one-sided estimates for second derivatives. For the fractional Laplacian we know that such inequality holds, without an error term, by Proposition~\ref{PROP-66glo}. A corresponding inequality for general kernels, even with an absorbable error term~$E$,  is unknown.  For possible future use, in Proposition~\ref{SUPERT} we state
the analogue of criterium \eqref{MAY18-2} for the auxiliary function involving the positive part.}
\end{openpb}

\subsection{Other fully nonlinear equations and operators
of indefinite order}\label{SEct:EXTE}

In this subsection we use our methods in the setting of
superposition of fractional operators of different orders but having an extension property.
Equations of indefinite order describe phenomena in which more than a single diffusion regime takes place.

We consider convex fully nonlinear equations of the following form.
Given a positive integer $J$, let~$F\in C(\R^J)$. 
We assume that there exist functions\footnote{Even though the functions~$
\alpha_j$ are not required to be continuous, they are assumed to be defined
everywhere, and not almost everywhere. This is consistent with the general setting of
the article, in which the equations are supposed to be satisfied
everywhere in a given domain ---indeed, our terminology ``for every'' has to be taken literally, and
not in the meaning of ``for almost every''. This will be
important when proving the maximum principle, since it will require to evaluate the equation at a maximum point. This framework coincides, for instance,
with the one of Chapter~3 in the monograph~\cite{MR1814364}.}
$\alpha_1,\dots,\alpha_J$ defined in $\R^{J}$ and constants~$\Theta_0\ge\vartheta_0>0$
such that
\begin{equation}\label{CONV:POSITIVA}
\alpha_j(p)\ge0\qquad{\mbox{for every $p\in
\R^J$ and $j\in\{1,\dots,J\}$,}}\end{equation}
\begin{equation}\label{MON:F}
\Theta_0\ge
\sum_{j=1}^J\alpha_j(p)\ge\vartheta_0\qquad{\mbox{for every $p\in
\R^J$}},
\end{equation}
and
\begin{equation}\label{CONV:F}
F(q)-F(p)\ge\sum_{j=1}^J\alpha_j(p)(q_j-p_j)
\qquad{\mbox{for every~$q$ and $p$ in $\R^J$.}}
\end{equation}

By \eqref{CONV:F}, $F$ is convex. Note that the hypotheses on $F$ represent,
all three together, convexity and a quantification of ellipticity.
They are satisfied, for instance, by Bellman-type equations built from a finite number of linear operators, which will correspond (see \eqref{EQ} below) to $F(p)=\max\{p_{1},\ldots,p_{J}\}$.\footnote{
Indeed, it suffices to define $\alpha_{j}(p)=1$ if $j$ is the smallest index for which~$p_{j}=F(p)$ and $\alpha_{j}(p)=0$ for all other indexes.}
However, our setting here is more general since we also include operators $F$ of class $C^1$.\footnote{ 
Notice that, since here we only want to involve a finite number of linear operators, the associated Bellman-type equations are not, tipically, of class $C^1$.  This is in contrast with the class of Bellman operators built from infinitely many linear operators (as in Theorems~\ref{MUCCI} and \ref{PUCCI TYPE BIS}), which recovers all convex operators (since any convex function can be written as the supremum of linear functions).
}
Indeed, the three assumptions on $F$ are also satisfied if~$F$ is~$C^1$, convex,
nondecreasing
in each of its coordinate variables, and satisfies
$ \Theta_0\geq \Sigma_{j=1}^J\partial_{p_j}F(p)\ge\vartheta_0$ for every $p\in \R^J$ ---here
we take~$\alpha_j(p):=\partial_{p_j}F (p)$.

We deal with the superposition of operators of different orders.
Given a probability measure~$\mu$ on~$[0,1]$, i.e.,
\begin{equation}\label{1.0} \mu\ge0\qquad{\mbox{and}}\qquad\mu([0,1])=\mu(\R)=1,\end{equation}
we define
\begin{equation} \label{OP:L}
{\mathcal{L}}_\mu u(x):=\int_0^1 (-\Delta)^s u(x)\,d\mu(s).\end{equation}
In case of~$\mu$ being a Dirac's delta
at some~$s\in[0,1]$, 
$ {\mathcal{L}}_\mu$~reduces to the fractional Laplacian~$(-\Delta)^s$
(in particular, to the classical Laplacian if $s=1$). For $s=0$, $(-\Delta)^{0}$ denotes the Identity, $(-\Delta)^{0}u=u$.
The interest of including $s=0$ is to allow a unified treatment
of fully nonlinear equations and obstacle problems; see Corollary~\ref{92OBS} below.

The operators ${\mathcal{L}}_\mu$ have been studied in~\cite{MR3485125}, in relation with Allen-Cahn type equations, through local extension methods.

Given a positive integer $J\in\N$, let $F$ be as above, $\mu_1,\dots,\mu_J$ be probability measures on~$[0,1]$,
and~$g_1,\dots,g_J$ be continuous functions in~$B_1$.
We consider solutions~$u:\R^n\to\R$ of
\begin{equation}\label{EQ}
F\big( {\mathcal{L}}_{\mu_1}u(x)- g _1(x), \;\cdots,\; {\mathcal{L}}_{\mu_J}u(x)- g _J(x) \big)=0\qquad{\mbox{for all }}x\in B_1.
\end{equation}

Note that we are dealing with a very general class of equations of indefinite order, which includes the model equation
\begin{equation}\label{FORM83} F\big( (-\Delta)^{s_1} u,
\;\dots,\; (-\Delta)^{s_J} u \big)=f(x),\end{equation}
with~$s_j\in[0,1]$. 

A more general framework consists of making affine changes of variables for each index~$j$. This establishes a connection between the equations of Subsection~\ref{sub:prima} and those of the current setting; 
see Remark~\ref{affine-indefinite} for more details.
In addition, in such generality equation~\eqref{EQ} would include the classical extremal equations built from a finite number of second order linear operators; see \cite{CC}.
Therefore, since we will establish one-sided second derivative bounds, the convexity assumption on $F$ in the following theorem cannot be dropped, in view of the classical counterexamples to $W^{2,\infty}$ regularity by
Nadirashvili and Vl\u adu\c t~\cite{NV}
for nonconvex fully nonlinear equations of second order.\footnote{Two comments are in order here. First, once a one-sided second derivative bound for a second order fully nonlinear uniformly elliptic equation is established, it automatically leads to full second derivative estimates (by using the equation itself; see the
Bernstein technique described in Chapter~9 of \cite{CC},
and in particular inequality~(9.5) combined with Lemma~6.4
in~\cite{CC}). Second, recall that Isaacs equations cover all possible fully
nonlinear elliptic equations of second order (see Remark~1.5 in \cite{CC-paper})
and that our estimates are independent of the number of operators $J$.}

Our result establishes first and one-sided second derivative bounds for solutions of~\eqref{EQ}. The estimates are uniform in the number $J$ of operators. 

\begin{theorem}\label{THM:1}
Given $F$ satisfying \eqref{CONV:POSITIVA}, \eqref{MON:F}, and  \eqref{CONV:F}, $\mu$ satisfying \eqref{1.0}, and functions $g _j\in W^{1,\infty}(B_1)$
for $j=1,\ldots,J$, let~$u \in C^{\infty}(\R^{n})\cap W^{2,\infty}(\R^{n})$ be a
solution of~\eqref{EQ}. 

Then,
\begin{equation}\label{EST-MAIN-1}\begin{split}
\sup_{B_{1/2}} |\nabla u|\le\,& C\,\Big(
\|u\|_{L^\infty(\R^n)}+ |F(0)|+ 
\sup_{j\in\{1,\dots,J\}}\| g_j\|_{W^{1,\infty}(B_1)}
\Big)
\end{split}\end{equation}
for some constant~$C$ depending only
on~$n$, $\vartheta_0$, and $\Theta_0$.

If in addition $g _j\in W^{2,\infty}(  B_1)$
for $j=1,\ldots,J$, then we have
\begin{equation}\label{EST-MAIN-2}\begin{split}
\sup_{B_{1/2}} \partial^2_e u\le\,& C\,\Big(
\|u\|_{L^\infty(\R^n)}+ |F(0)|+ 
\sup_{j\in\{1,\dots,J\}}\| g_j\|_{W^{2,\infty}(B_1)}
\Big)
\end{split}\end{equation}
for every~$e\in\R^n$ with~$|e|=1$, where~$C$ is as before.
\end{theorem}

We remark that the estimates of
Theorem~\ref{THM:1} are new, to the best of our
knowledge, even in the case when $F$ is linear,
even for~${\mathcal{L}}_{\mu_j}=(-\Delta)^{s_j}$,
and even when all the functions $g_{j}$ are taken to be zero.

As in Subsection~\ref{sub:prima}, the one-sided second derivative estimate~\eqref{EST-MAIN-2} is somehow surprising since, for operators which could be of order smaller than two,
second derivative estimates are not expected to hold.

Since the operator in Theorem~\ref{THM:1} is of indefinite order, considering the equation
in a ball~$B_R$ instead of~$B_1$ produces
an unusual dependence on the radius~$R$
of the right-hand side of the corresponding estimates; see Remark~\ref{Rk_on_postive_parts}
for more details.

Theorem~\ref{THM:1} includes, as a particular case, the obstacle problem for the fractional 
Laplacian,
also called ``thin obstacle problem'' or ``Signorini problem''. For this, we take the measures
to be $\mu_{1}=\delta_{s}$ for some $s\in [0,1]$ and $\mu_{2}=\delta_{0}$, and $F$ to be the $\max$ operator. 

\begin{corollary}\label{92OBS}
Let~$s\in[0,1]$, $f$ and~$\phi$ be $W^{1,\infty}( B_1)$ functions,
and~$u\in C^{\infty}(\R^{n})\cap W^{2,\infty}(\R^{n})$ be a 
solution of
\begin{equation}\label{934rtiI234}
\max\Big\{
(-\Delta)^{s}u, \;u- \phi
\Big\}=f \quad {\mbox{everywhere in }} B_1.
\end{equation}

Then,
\begin{equation}\label{KSM-P8SKNDN}
\sup_{B_{1/2}} |\nabla u|\le C\,\Big(\| u\|_{L^\infty(\R^n)}
+\| f\|_{W^{1,\infty}(B_1)}+\| \phi \|_{W^{1,\infty}(B_1)}
\Big)\end{equation}
for some constant~$C$ depending only on~$n$.

If in addition $f$ and $\phi$ belong to $W^{2,\infty}( B_1)$, then we have
\begin{equation}\label{SEMI}\begin{split}&
\sup_{B_{1/2}} \partial^2_e u\le C\, \Big(\| u\|_{L^\infty(\R^n)}
+\|f\|_{W^{2,\infty}( B_1)}+\|\phi\|_{W^{2,\infty}( B_1)}\Big)
\end{split}\end{equation}
for every~$e\in\R^n$ with~$|e|=1$,
where~$C$ depends only on~$n$.
\end{corollary}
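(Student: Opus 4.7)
The plan is to realize Corollary~\ref{92OBS} as a direct specialization of Theorem~\ref{THM:1}. I would take $J=2$, $\mu_1=\delta_s$ and $\mu_2=\delta_0$, so that ${\mathcal{L}}_{\mu_1}=(-\Delta)^s$ and ${\mathcal{L}}_{\mu_2}$ is the identity; choose $F(p_1,p_2)=\max\{p_1,p_2\}$, taking $\alpha_j(p)=1$ when $j$ is the smallest index achieving $F(p)$ and zero otherwise, as suggested after~\eqref{CONV:F}, so that $\vartheta_0=1$; and set $g_1\equiv 0$, $g_2=g$. Equation~\eqref{EQ} then becomes exactly~\eqref{934rtiI234}, the hypotheses~\eqref{1.0}--\eqref{CONV:F} are met, $\omega_0=1$ since $\mu_2(\{0\})=1$, and the combined datum of Theorem~\ref{THM:1} is $\tilde g(x):=-F(-g_1(x),-g_2(x))=\min\{0,g(x)\}$.

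Next I would read off the relevant constants. Since exactly one $\alpha_j$ is nonzero at any given point and $\nabla g_1\equiv 0$, one gets $\gamma_1\le \|\nabla f\|_{L^\infty(B_1)}+\|\nabla g\|_{L^\infty(B_1)}$ and $\gamma_2\le \|(D^2 f)_+\|_{L^\infty(B_1)}+\|(D^2 g)_+\|_{L^\infty(B_1)}$. Because the Bernstein argument behind Theorem~\ref{THM:1} uses an auxiliary function of the form $\eta^2(\partial_e u)^2+\sigma u^2$ built for a single direction $e$, the proof is intrinsically directional and its conclusion can be stated with $\partial_e f$, $\partial_e g$, $(\partial_e^2 f)_+$ and $(\partial_e^2 g)_+$ in place of the full gradient and Hessian norms, as needed for~\eqref{KSM-P8SKNDN} and~\eqref{SEMI}.

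The delicate step, as announced in the paragraph preceding the corollary, is to prove that $\|(f+\tilde g)_-\|_{L^\infty(B_1)}\le C\|u\|_{L^\infty(\R^n)}$. The starting point is the obstacle inequality $u-g\le f$ in $B_1$, which follows from the fact that $u-g$ is one of the entries of the $\max$ in~\eqref{934rtiI234} and must therefore be at most $f$; equivalently, $f+g\ge u\ge -\|u\|_{L^\infty(\R^n)}$. At points where $g\le 0$ one has $\tilde g=g$ and the bound $(f+\tilde g)_-\le\|u\|_{L^\infty(\R^n)}$ is immediate. At points where $g>0$ one has instead $\tilde g=0$, so $(f+\tilde g)_-=f_-$, and a lower bound on $f$ must be extracted by combining both components of the equation, namely $(-\Delta)^s u\le f$ and $u-g\le f$, together with the fact that at least one of them is an equality at every $x\in B_1$; I expect this case to be the main obstacle in the argument.

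Finally, inserting the bound $\|(f+\tilde g)_-\|_{L^\infty(B_1)}\le C\|u\|_{L^\infty(\R^n)}$ into Theorem~\ref{THM:1} with $R=1$, the mixed term $\|(f+\tilde g)_-\|^{1/2}\|u\|_{L^\infty(B_1)}^{1/2}$ is controlled by $C\|u\|_{L^\infty(\R^n)}$ via the elementary inequality $a^{1/2}b^{1/2}\le\tfrac{1}{2}(a+b)$, yielding~\eqref{KSM-P8SKNDN}. Under the additional semiconcavity hypothesis on $f$ and $g$, the same reduction applied to the second-derivative part of Theorem~\ref{THM:1} gives~\eqref{SEMI}.
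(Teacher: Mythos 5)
Your route matches the paper's exactly: specialize Theorem~\ref{THM:1} with $J=2$, $\mu_1=\delta_s$, $g_1\equiv 0$, $\mu_2=\delta_0$, $g_2=g$, $F=\max$, and read off $\tilde g:=-F(-g_1,-g_2)=\min\{0,g\}$, $\vartheta_0=1$, $\omega_0=1$, with $\gamma_1,\gamma_2$ controlled via $|\alpha_j|\le 1$. The step you flag is, however, a genuine gap, and your proposal as written does not establish the corollary. In fact, your careful distinction between $\tilde g$ and $g$ exposes a point where the paper's own argument is too terse: the paper writes ``$\|(f+g)_-\|_{L^\infty(B_1)}\le\|u\|_{L^\infty(B_1)}$'' and justifies it from $u-g\le f$, which proves $(f+g)_-\le\|u\|$ with $g$ the \emph{obstacle} datum; but the quantity that actually enters \eqref{EST-MAIN-1} is $\tilde g$ from \eqref{EFFEDE}, and since $\tilde g\le g$ one has $(f+\tilde g)_-\ge(f+g)_-$, so this inequality points the wrong way. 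Exactly as you observe, at points with $g>0$ one has $(f+\tilde g)_-=f_-$, and the obstacle inequality alone gives only $f\ge u-g$, which is useless when $g$ is large and positive.

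The missing estimate can be supplied, but it needs an input you have not given, namely a lower bound on $f$ of the form $\|f_-\|_{L^\infty(B_1)}\le C_n\big(\|u\|_{L^\infty(\R^n)}+\|\nabla f\|_{L^\infty(B_1)}\big)$; this still yields \eqref{KSM-P8SKNDN} after the usual step $\sqrt{ab}\le\tfrac12(a+b)$, since $\|\nabla f\|$ is already present on the right-hand side. The bound follows from a barrier comparison: if $f(x_0)=-m_0$ at some $x_0\in B_1$ with $m_0>4\|\nabla f\|_{L^\infty(B_1)}$, the Lipschitz bound forces $f\le -m_0/2$ on all of $B_1$, hence $(-\Delta)^s u\le f\le -m_0/2$ in $B_1$; comparing $u-\|u\|_{L^\infty(\R^n)}$ with a multiple of a torsion-type barrier for $B_1$ (the constant being uniform in $s\in[0,1]$) then gives $m_0\le C_n\|u\|_{L^\infty(\R^n)}$. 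Without some such argument your proof is incomplete at exactly the point you identified, and the same comment applies to the one-line reduction appearing in the paper.
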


The gradient estimate of Corollary~\ref{92OBS} applies to a large number of equations. Namely,  as in footnote~\ref{FOOTNOTE4}, given any smooth function $u$ in $ W^{2,\infty}(\R^{n})$, take $\phi=0$ and define $f:=\max\{
(-\Delta)^{s}u, \;u\}$; note that $f$ is a Lipschitz function.

The bound~\eqref{SEMI} recovers the semiconcavity
estimate for the thin obstacle problem, first proved by Athanasopoulos and Caffarelli~\cite{AC04}
and later extended by Fern\' andez-Real \cite{FR} to the fully nonlinear thin obstacle problem.
In these papers the Bernstein technique was already used, but with a less flexible
auxiliary function than
in the current work: their auxiliary function is linear in
the second derivatives, while ours is quadratic. The quadratic structure has already
allowed further applications in obstacle problems. Indeed, in private communication
with the authors of \cite{FR-J} (an article that cites ours),
Fern\' andez-Real and Jhaveri have used our method in a situation where a
polynomial solving the thin obstacle problem is subtracted to the solution and
have gotten, in this way, estimates independent of the polynomial. This required
the use of the quadratic auxiliary function, as well as the use of incremental
quotients. 

\subsection*{Organization of the paper}\label{ORGA}

The rest of this paper is organized as follows. 

Section~\ref{PARTE2} is devoted to the arguments needed
to treat the operators defined
``downstairs'' in Subsections~\ref{sec:seconda}
and~\ref{SEct:EXTE}. Subsection~\ref{8i8996999} contains
a criterium for the key inequality for auxiliary functions, Proposition~\ref{SUPERT}, which will complement Proposition~\ref{SUPERT-Intro}.
In Subsection~\ref{934itgfoeojtthewi34848488484}
we prove Theorem~\ref{CONTO TRACCIA}, while Subsection~\ref{9o23939398iww3456iw}
contains a proof of the key inequality
of Proposition~\ref{Berns-frac} without using the extension
but with the additional assumption that~$u$
is $s$-harmonic.

In Section~\ref{61}
we present the necessary material on
linearized operators and 
maximum principles needed for the proofs of our main results.

In
Section~\ref{P98765vgyuE1}
we state and prove a general statement
(namely, Theorem~\ref{sec:XT}) which will be pivotal to obtain
the main results of this paper. 

Section~\ref{PARTE1} contains
the proofs of those results presented in 
Subsections~\ref{sub:prima}
and~\ref{SEct:EXTE} which deal with
operators ``with extensions''. More precisely,
in Subsection~\ref{sec:uno} we 
discuss Proposition~\ref{Berns-frac}
and its variants needed for the proof of the main results,
while
Subsection~\ref{MUCCI:S} contains the proofs of
Theorem~\ref{MUCCI}, Theorem~\ref{THM:1},
and Corollary~\ref{92OBS}.

In Section~\ref{934itgfoeojtthewi34848488484BOS} we deal with operators without an extension property.
By suitable scaled estimates,
we will be able to ``reabsorb'' our error or remainder term in Theorem~\ref{CONTO TRACCIA}
and complete the proof of Theorem~\ref{PUCCI TYPE BIS}. 

The three first appendices concern results needed in the paper;
Appendix \ref{AP-maxple} is of special interest since it establishes a maximum principle for the extension problem which is new, up to our knowledge. On the other hand, Appendix~\ref{sec:regularity-new} is of informative nature and discusses
existence and regularity issues for the equations of the paper. 

\section{The key inequalities for general integro-differential operators}\label{PARTE2}

\subsection{Equivalent formulations of
the key inequalities}\label{8i8996999}

Here we 
provide the proof of Proposition~\ref{SUPERT-Intro}, which will be used in next subsection to establish first derivative estimates. 
The proof of Proposition~\ref{SUPERT-Intro}
will also establish the following criterium, a variant
of the proposition which involves the positive part of the derivative. If one could prove  that inequality~\eqref{MAY18-4} appearing below holds for an appropriate error $E$, then one-sided second derivative estimates for operators with general kernels would follow;  see Open problem~\ref{OP2BIS}.

\begin{proposition} \label{SUPERT}
Let~${\mathcal{L}}_K$ be
as in~\eqref{OP:L:BIS}, with~$K$ satisfying~\eqref{EVEN}
and~\eqref{KLIM}.
Given~$v\in C^\infty(\R^n)\cap W^{1,\infty}(\R^n)$,
$\overline\eta \in
C^\infty(\R^n)\cap L^\infty(\R^n)$,
$e\in \R^n$ with~$|e|=1$, $\tau>0$, and~$\CALERRE\in\R$,
consider
\begin{equation}\label{8uj8ij6yhb6ytg8uj} \psi:=\overline\eta^2 \big(
\partial_e v\big)^2_{+} +\tau v^2.\end{equation}

Then, the inequality
\begin{equation}\label{MAY18-3}
{\mathcal{L}}_K \psi\le 2\overline\eta^2\,(\partial_e v)_{+}\,
{\mathcal{L}}_K\big(( \partial_e v)_{+}\big)
+2\tau v\,{\mathcal{L}}_K v+\CALERRE
\end{equation}
holds at a point~$x\in\R^n$ if and only if
\begin{equation}\label{MAY18-4}\begin{split}&
2\int_{\R^n}\overline\eta(x)\,\big( \overline\eta(x)-\overline\eta(y)\big)\,(\partial_e v)_{+}(x)\,(\partial_e v)_{+}(y)\,K(x-y)\,dy
\\ &\qquad\le\,\int_{\R^n}\big|\overline\eta(x)\,(\partial_e v)_{+}(x)-\overline\eta(y)\,(\partial_e v)_{+}(y)\big|^2\,K(x-y)\,dy
\\&\qquad\qquad+\tau\int_{\R^n}\big|v(x)-v(y)\big|^2\,K(x-y)\,dy
+\CALERRE.\end{split}\end{equation}
\end{proposition}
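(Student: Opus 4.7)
The plan is to verify that \eqref{MAY18-3} and \eqref{MAY18-4} are in fact equivalent reformulations of the same pointwise identity, obtained by expanding ${\mathcal{L}}_K\psi(x)$ directly and comparing with the ``correction'' terms $2\overline\eta^2(x)(\partial_e v)_+(x)\,{\mathcal{L}}_K((\partial_e v)_+)(x)$ and $2\tau v(x)\,{\mathcal{L}}_K v(x)$. Writing for short $w:=(\partial_e v)_+$ and $\eta:=\overline\eta$, the function $\psi$ in \eqref{8uj8ij6yhb6ytg8uj} splits as $\psi=\eta^2 w^2+\tau v^2$, so it is enough to analyze the two pieces separately and then sum.

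First I would record the elementary algebraic identity
\[
a^2-b^2-2a(a-b)=-(a-b)^2
\qquad\text{for all }a,b\in\R.
\]
Applying it to $a=v(x)$, $b=v(y)$ inside the defining integral of ${\mathcal{L}}_K(v^2)(x)$ yields immediately
\[
{\mathcal{L}}_K(v^2)(x)-2v(x)\,{\mathcal{L}}_K v(x)
=-\int_{\R^n}\!\bigl(v(x)-v(y)\bigr)^{2}K(x-y)\,dy.
\]
For the piece $\eta^2 w^2$, I apply the same identity to $a=\eta(x)w(x)$, $b=\eta(y)w(y)$; since the correction term in \eqref{MAY18-3} carries $\eta^2(x)$ (not the mixed factor $\eta(x)\eta(y)$), I then add and subtract $2\eta(x)\eta(y)w(x)w(y)$ to obtain
\begin{align*}
&{\mathcal{L}}_K(\eta^2 w^2)(x)-2\eta^2(x)\,w(x)\,{\mathcal{L}}_K w(x)\\
&\qquad=-\int_{\R^n}\!\bigl(\eta(x)w(x)-\eta(y)w(y)\bigr)^{2}K(x-y)\,dy\\
&\qquad\qquad+2\int_{\R^n}\!\eta(x)\bigl(\eta(x)-\eta(y)\bigr)w(x)w(y)\,K(x-y)\,dy.
\end{align*}
Summing the two identities, the inequality \eqref{MAY18-3} at $x$ reads exactly
\begin{align*}
&2\int_{\R^n}\!\eta(x)\bigl(\eta(x)-\eta(y)\bigr)w(x)w(y)\,K(x-y)\,dy\\
&\qquad-\int_{\R^n}\!\bigl(\eta(x)w(x)-\eta(y)w(y)\bigr)^{2}K(x-y)\,dy
-\tau\!\int_{\R^n}\!\bigl(v(x)-v(y)\bigr)^{2}K(x-y)\,dy\le E,
\end{align*}
which is \eqref{MAY18-4} after transposing. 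The same computation, with $w:=\partial_e u$ (no positive part), $v$ replaced by $u$, and $\tau$ by $\sigma$, yields Proposition~\ref{SUPERT-Intro}.

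The only non-cosmetic point to address is the well-posedness of all integrals in the principal value sense at $x$. Since $v\in C^\infty(\R^n)\cap W^{1,\infty}(\R^n)$, $\partial_e v$ is smooth and the positive part $w=(\partial_e v)_+$ is Lipschitz (globally); together with $\eta\in C^\infty(\R^n)\cap L^\infty(\R^n)$, this makes the two products $\eta(y)w(y)$ and $v(y)$ Lipschitz near $x$, so the squared differences in the right-hand side of \eqref{MAY18-4} behave like $|x-y|^2$ near the diagonal and are integrable against $K$ by \eqref{KLIM}; on the left-hand side the factor $\eta(x)-\eta(y)=O(|x-y|)$ together with the uniform bound on $w$ gives the same control. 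Hence each principal value in \eqref{MAY18-1}-\eqref{MAY18-2}, and in the positive-part variant \eqref{MAY18-3}-\eqref{MAY18-4}, is finite, and the algebraic identity obtained above can be rearranged term by term. The only conceptual subtlety, which is genuine, is that $w=(\partial_e v)_+$ is merely $C^{0,1}$ rather than $C^2$, so one cannot freely ``differentiate'' under ${\mathcal{L}}_K$; this however is not needed anywhere in the argument, which works only with the raw definition of ${\mathcal{L}}_K$ as a principal value.
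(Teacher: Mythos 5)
Your proof is correct and takes essentially the same algebraic route as the paper: the paper unifies both propositions by writing ${\mathcal{G}}(t)\in\{t,\,t_+\}$ and expanding ${\mathcal{L}}_K\psi-2\overline\eta^2 (\partial_e v)_+{\mathcal{L}}_K((\partial_e v)_+)-2\tau v{\mathcal{L}}_K v+\tau\int|v(x)-v(y)|^2K$ directly, while you split $\psi$ into two pieces, apply the single identity $a^2-b^2-2a(a-b)=-(a-b)^2$, and then correct for the mismatch between $2\eta(x)w(x)(\eta(x)w(x)-\eta(y)w(y))$ and $2\eta^2(x)w(x)(w(x)-w(y))$; both collapse to the same final expression, and the convention $0\cdot(-\infty)=0$ handles the points where $w(x)=0$.

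One small imprecision in your well-posedness paragraph: the bound $|\overline\eta(x)-\overline\eta(y)|=O(|x-y|)$ together with boundedness of $w$ only gives an integrand of order $|x-y|^{1-n-2s}$ near the diagonal, which is \emph{not} absolutely integrable when $s\ge 1/2$. The finiteness of the left-hand side of~\eqref{MAY18-4} in that regime requires the evenness of the kernel~\eqref{EVEN} so that the linear term $-\nabla\overline\eta(x)\cdot(y-x)\,w(x)^2\,K(x-y)$ contributes zero to the principal value, leaving an $O(|x-y|^2)K$ remainder; this is exactly the observation the paper makes in its remarks following the proposition, and you should invoke it rather than claim the same $O(|x-y|^2)$ control.
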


Note that the integrals in \eqref{MAY18-4} are finite since $\overline\eta$ is smooth, locally, and
bounded at infinity. At the same time, as in the Introduction, ${\mathcal{L}}_K \psi$ is well defined everywhere in~$\R^n$ since~$(\partial_e v)_{+}^2$ is a locally $W^{2,\infty}$ function which is bounded in all of~$\R^n$.
However, $(\partial_e v)_{+}$ in the right-hand side
of~\eqref{MAY18-3}
is only a $C^{1,1}$ function from below (locally). Recall that one says that~$\varphi\in C(B_1)$ is ``$C^{1,1}$
from below'' in~$B_1$ if for every~$x_0\in B_1$ there exists~$w\in C^{1,1}(B_1)=W^{2,\infty}(B_1)$ such that~$w\le \varphi$ everywhere in~$B_1$ and~$w(x_0)=\varphi(x_0)$.
This setting is sufficient to define the operator pointwise everywhere by having values
in~$\{-\infty\}\cup\R$. In addition, we make the convention~$
0\cdot(-\infty)=0$ in the expression~$(\partial_e v)_{+}\,
{\mathcal{L}}_K\big(( \partial_e v)_{+}\big)$ in \eqref{MAY18-3}.

Furthermore, we notice that whenever~\eqref{MAY18-3} holds true, then also
\begin{equation}\label{chiamare}
{\mathcal{L}}_K \psi\le 2\overline\eta^2\,(\partial_e v)_{+}\,
{\mathcal{L}}_K  \partial_e v
+2\tau v\,{\mathcal{L}}_K v+\CALERRE,
\end{equation}
since this would follow from the above convention when~$\partial_e v(x)\le0$
and from the fact that
$${\mathcal{L}}_K\big(( \partial_e v)_{+}\big)(x)\le{\mathcal{L}}_K\partial_e v(x)\;\mbox{ when }\;\partial_e v(x)>0.$$
This observation
is relevant since
Lemma~\ref{SUBSOLU:EXTE} will give control of~${\mathcal{L}}_K\partial_e v$ from above.
This is why, within Section~\ref{PARTE1} on operators with an extension, the inequality is stated
as~\eqref{chiamare} (with~$E=0$), and not as~\eqref{MAY18-3}.

We also point out that the integrals in~\eqref{MAY18-2}
and~\eqref{MAY18-4} are all
well defined, due to the regularity of
the functions involved. First, the integral in the left-hand side of~\eqref{MAY18-2}
is well defined in the principal value sense, since, for small~$z$,
\begin{eqnarray*}&&
\big( \eta(x)-\eta(x+z)\big)\,\partial_e u(x+z)\,K(z)=
\big( -\nabla\eta(x)\cdot z+O(|z|^2)\big)\,
\big(\partial_e u(x)+O(|z|)
\big)\,K(z)\\&&\qquad=-\partial_e u(x)\nabla\eta(x)\cdot z\,K(z)
+O(|z|^2)\,K(z),
\end{eqnarray*}
and the term~$-\nabla\eta(x)\cdot z\,K(z)$
provides a null contribution to the principal value of the integral
over~$z\in B_1$,
thanks to the symmetry assumption~\eqref{EVEN}
(the decay assumption~\eqref{KLIM}
will then make the term~$O(|z|^2)\,K(z)$ integrable for~$z\in B_1$).
A similar argument applies to the first integral in~\eqref{MAY18-4} where one may assume~$(\partial_e v)_+(x)>0$.

The second integral in~\eqref{MAY18-2} is instead a classical
Lebesgue integral,
since, for small~$z$,
$$ \big|\eta(x)\,\partial_e u(x)-\eta(x+z)\,\partial_e u(x+z)\big|^2\,K(z)
\le \|\eta\,\partial_e u\|_{W^{1,\infty}(B_1(x))}^2\,|z|^2\,K(z).$$
The same argument applies to
the second integral in~\eqref{MAY18-4}, since~$(\partial_e v)_+$ is locally a $W^{1,\infty}$ function.
A simpler argument gives that also the last integrals in~\eqref{MAY18-2}
and~\eqref{MAY18-4} are well defined.

We now establish both Propositions~\ref{SUPERT-Intro}
and~\ref{SUPERT} in a unified manner.

\begin{proof}[Proof of Propositions~\ref{SUPERT-Intro} and~\ref{SUPERT}]
Here, we take~${\mathcal{G}}(t)$ to be\footnote{We point out that
formally (that is, when all the integrals make sense) the arguments
presented here are valid for all nonlinear functions~${\mathcal{G}}$.}
either~$t$ or~$t_+$. We also adopt the notation~${\mathcal{G}}^2(t):=\big({\mathcal{G}}(t)\big)^2$. We write the proof for~$u$, $\sigma$, and~$\eta$
to address Proposition~\ref{SUPERT-Intro}, with~${\mathcal{G}}(t)=t$ in this case. By replacing
these choices by~$v$, $\tau$, and~$\overline\eta$, respectively,
and with~${\mathcal{G}}(t)=t_+$ now, we will conclude Proposition~\ref{SUPERT}.

Given a kernel~$K$, we have
\begin{eqnarray*}
&& {\mathcal{L}}_K \Big( \eta^2\,{\mathcal{G}}^2\big(\partial_e u\big)+\sigma
\,u^2\Big)(x)
-2\eta^2(x)\, {\mathcal{G}}\big(\partial_e u(x)\big)\,{\mathcal{L}}_K\,{\mathcal{G}}\big(\partial_e u\big)(x)
\\&&\qquad\qquad-2\sigma\,u(x)\,{\mathcal{L}}_K u(x)+\sigma\int_{\R^n}\big|u(x)-u(y)\big|^2\,K(x-y)\,dy\\
&&\qquad= \int_{\R^n} 
\Big( \eta^2(x)\,{\mathcal{G}}^2\big(\partial_e u(x)\big)
-\eta^2(y)\,{\mathcal{G}}^2\big(\partial_e u(y)\big)\Big)\,K(x-y)\,dy\\&&\qquad\qquad
+\sigma\int_{\R^n} \big(u^2(x)-u^2(y)\big)\,K(x-y)\,dy\\
&&\qquad\qquad-2\eta^2(x)\, {\mathcal{G}}\big(\partial_e u(x)\big)\,
\int_{\R^n}\Big( {\mathcal{G}}\big(\partial_e u(x)\big)-{\mathcal{G}}\big(\partial_e u(y)\big)\Big)
\,K(x-y)\,dy\\
&&\qquad\qquad-2\sigma\,u(x)\,\int_{\R^n} \big(u(x)-u(y)\big)\,K(x-y)\,dy
+\sigma\int_{\R^n}\big|u(x)-u(y)\big|^2\,K(x-y)\,dy
\\&&\qquad= \int_{\R^n} 
\Big( \eta^2(x)\,{\mathcal{G}}^2\big(\partial_e u(x)\big)
-\eta^2(y)\,{\mathcal{G}}^2\big(\partial_e u(y)\big)\Big)\,K(x-y)\,dy
\\&&\qquad\qquad-2\eta^2(x)\, {\mathcal{G}}\big(\partial_e u(x)\big)\,
\int_{\R^n}\Big( {\mathcal{G}}\big(\partial_e u(x)\big)-{\mathcal{G}}\big(\partial_e u(y)\big)\Big)
\,K(x-y)\,dy\\
&&\qquad= \int_{\R^n}\Big( 2\eta^2(x)\, {\mathcal{G}}\big(\partial_e u(x)\big)\, {\mathcal{G}}\big(\partial_e u(y)\big)
\\&&\qquad\qquad-\eta^2(x)\, {\mathcal{G}}^2\big(\partial_e u(x)\big)-
\eta^2(y)\, {\mathcal{G}}^2\big(\partial_e u(y)\big)
\Big)\,K(x-y)\,dy
\\ &&\qquad=2\int_{\R^n} \eta(x)\,\big(\eta(x)-\eta(y)\big)\, {\mathcal{G}}\big(\partial_e u(x)\big)\, {\mathcal{G}}\big(\partial_e u(y)\big)
\,K(x-y)\,dy
\\&&\qquad\qquad-
\int_{\R^n}\Big|
\eta(x)\, {\mathcal{G}}\big(\partial_e u(x)\big)-
\eta(y)\, {\mathcal{G}}\big(\partial_e u(y)\big)
\Big|^2\,K(x-y)\,dy.
\end{eqnarray*}
As a consequence, the inequality
$$ {\mathcal{L}}_K \big(\eta^2
{\mathcal{G}}^2 (\partial_e u )
+\sigma u^2\big)\le 2\eta^2\,{\mathcal{G}}\big(\partial_e u\big)\,{\mathcal{L}}_K\,
{\mathcal{G}}\big(\partial_e u\big) +2\sigma u\,{\mathcal{L}}_K u+\CALERRE
$$
is pointwise equivalent to 
\begin{equation*}
\begin{split}&
2\int_{\R^n}\eta(x)\,\big( \eta(x)-\eta(y)\big)\,{\mathcal{G}}\big(
\partial_e u(x)\big)\,{\mathcal{G}}\big(\partial_e u(y)\big)\,K(x-y)\,dy
\\ &\qquad\le\,\int_{\R^n}\Big|\eta(x)\,{\mathcal{G}}\big(\partial_e u(x)\big)
-\eta(y)\,{\mathcal{G}}\big(\partial_e u(y)\big)\Big|^2\,K(x-y)\,dy
\\&\qquad\qquad+\sigma\int_{\R^n}\big|u(x)- u(y)\big|^2\,K(x-y)\,dy
+\CALERRE.\end{split}\end{equation*}

{F}rom this, as explained above, one deduces Propositions~\ref{SUPERT-Intro} and~\ref{SUPERT}.
\end{proof}

\subsection{Proof of the first key inequality with a remainder}\label{934itgfoeojtthewi34848488484}

This subsection contains the proof
of our main inequality for the auxiliary function in the case of general
integro-differential operators.

\begin{proof}[Proof of Theorem~\ref{CONTO TRACCIA}]
By the translation invariance of the problem, we see that, to establish~\eqref{T6}
in~$B_2$,
it suffices to prove that
\begin{equation}\label{T6.3}\begin{split}&
2\int_{\R^n}\eta(0)\,\big( 
\eta(0)-\eta(y)\big)\partial_e u(0)\,\partial_e u(y)\,K(y)\,dy\\ &\qquad\qquad\leq\,
\int_{\R^n} \big|
\eta(0)\partial_e u(0)-\eta(y)\partial_e u(y)\big|^2\,K(y)\,dy\\&\qquad\qquad\qquad+
\sigma_\e\,\int_{\R^n}\big|u(0)-u(y)\big|^2\,K(y)\,dy+\e^2\,\| \partial_e u\|^2_{L^\infty(B_1)}.
\end{split}
\end{equation}
Once this is proved, and using Proposition~\ref{SUPERT-Intro} at~$x=0$
with~$\CALERRE:=\e^2\,\|\partial_e u\|^2_{L^\infty(B_1)}$,
the right-hand side of inequality~\eqref{T6}
in~$B_2$ will become~$\e^2\,\| \partial_e u\|^2_{L^\infty(B_3)}$.

To prove~\eqref{T6.3}, we exploit an appropriate cutoff procedure
on the gradient of~$u$, to suitably remove the singularity
of the integrand near the origin in the left-hand side of~\eqref{T6.3}, (without spoiling the estimates
at infinity). 
Namely, 
we consider an odd function~$\xi\in C^\infty_c((-2,2))$
such that~$\xi(t)=t$ if~$t\in(-1,1)$, and
$|\xi(t)|\le2$ for every~$t\in\R$.
We also consider~$\delta\in\left(0,\frac12\right]$, and set~$\xi_\delta(t):=\delta\xi(t/\delta)$.
We observe that
\begin{equation}\label{QUI90}
\xi_\delta(0)=0,\quad \xi_\delta'(0)=1, \quad{\mbox{ and }}\quad
\|\xi_\delta\|_{C^2(\R)}\le \frac{C}{\delta},
\end{equation}
for some universal constant~$C$.

We also introduce the map~$
\R^n\ni y=(y_1,\dots,y_n)\mapsto {\mathcal{Z}}_\delta(y):=\big( \xi_\delta(y_1),\dots,\xi_\delta(y_n)\big)$.
Since~$\xi_\delta$ is odd, we have that
\begin{equation}\label{QUI2091} \int_{\R^n} \nabla \eta(0)\cdot {\mathcal{Z}}_\delta(y)\,K(y)\,dy
=0,\end{equation}
in the principal value sense, thanks to the symmetry of the kernel~\eqref{EVEN}.
Consequently, we have that
\begin{equation}\label{QUI91}\begin{split}&
\int_{\R^n}\eta(0)\,\big( 
\eta(0)-\eta(y)\big)\partial_e u(0)\,\partial_e u(y)\,K(y)\,dy\\ &\qquad=\,
\int_{\R^n}\Big(\eta(0)\,\big( 
\eta(0)-\eta(y)
\big)\partial_e u(0)\,\partial_e u(y)+\eta(0)|\partial_e u(0)|^2\,
\nabla\eta(0)\cdot {\mathcal{Z}}_\delta(y)\Big)
\,K(y)\,dy.
\end{split}\end{equation}

Furthermore, we set 
\begin{equation}\label{QUI92}
I_1(y)\, :=\, \eta(0)\,\Big(\eta(0)-\eta(y)
+\nabla\eta(0)\cdot {\mathcal{Z}}_\delta(y)
\Big)\partial_e u(0)\,\partial_e u(y),
\end{equation}
\begin{equation}\label{QUI92BIS}
I_2 (y)\,:=\,
\partial_e u(0)\,\big(\eta(0)\partial_e u(0)-\eta(y)\partial_e u(y)\big)\,
\nabla\eta(0)\cdot {\mathcal{Z}}_\delta(y),\end{equation}
and
\begin{equation}\label{I3}
I_3(y)\,:=\, 
\big(\eta(y)-\eta(0)\big)\,
\partial_e u(0)\,\partial_e u(y)\,
\nabla\eta(0)\cdot {\mathcal{Z}}_\delta(y),
\end{equation}
and we
point out that
\begin{equation}\label{0x-10133}\begin{split}
& \eta(0)\,\big(\eta(0)-\eta(y)\big)\partial_e u(0)\,\partial_e u(y)
+\eta(0)|\partial_e u(0)|^2\,
\nabla\eta(0)\cdot {\mathcal{Z}}_\delta(y)\\ &\qquad\qquad
=\;
\eta(0)\,\Big(\eta(0)-\eta(y)
+\nabla\eta(0)\cdot {\mathcal{Z}}_\delta(y)
\Big)\partial_e u(0)\,\partial_e u(y)\\
&\qquad\qquad\qquad+\eta(0)\partial_e u(0)\,\big(\partial_e u(0)-\partial_e u(y)\big)\,
\nabla\eta(0)\cdot {\mathcal{Z}}_\delta(y)\\&\qquad\qquad
=\; I_1(y)+
\partial_e u(0)\,\big(\eta(0)\partial_e u(0)-\eta(y)\partial_e u(y)\big)\,
\nabla\eta(0)\cdot {\mathcal{Z}}_\delta(y)\\&\qquad\qquad\qquad
+\partial_e u(0)\,\big(\eta(y)\partial_e u(y)-\eta(0)\partial_e u(y)\big)\,
\nabla\eta(0)\cdot {\mathcal{Z}}_\delta(y)
\\ &\qquad\qquad=\; I_1(y)+I_2(y)+I_3(y).
\end{split}\end{equation}
Hence, substituting into~\eqref{QUI91}, we obtain
\begin{equation}\label{QUI220}
\int_{\R^n}\eta(0)\,\big( 
\eta(0)-\eta(y)\big)\partial_e u(0)\,\partial_e u(y)\,K(y)\,dy =
\int_{\R^n} \big( I_1(y)+I_2(y)+I_3(y)\big)\,K(y)\,dy.
\end{equation}

Now, we set
\begin{equation}\label{QUI99} \varphi_\delta(y):=\eta(0)-\eta(y)
+\nabla\eta(0)\cdot {\mathcal{Z}}_\delta(y),\end{equation}
and we observe that
\begin{equation}\label{QUI100}
\varphi_\delta(0)=0,\quad\nabla\varphi_\delta(0)=0,\quad{\mbox{ and }}\quad
\|\varphi_\delta\|_{C^2(\R^n)}\le C_\delta\,\|\eta\|_{C^2(\R^n)},
\end{equation}
for some constant~$C_\delta>0$, depending only on~$n$ and~$\delta$,
thanks to the properties~\eqref{QUI90} of~$\xi_\delta$.

We also set 
\begin{equation}\label{QUI101}
{ J_1 }\, :=\,\int_{\R^n}\partial_e\big(\varphi_\delta(y) K(y)\big)\,\big(
\eta(y)\,\partial_e u(y)-\eta(0)\,\partial_e u(0) \big)\big(u(y)-u(0)\big)\,dy\end{equation}
and
\begin{equation}\label{QUI101E}
{ J_2 }\,:=\,
\frac12\,
\int_{\R^n}\partial_e\,\Big(\partial_e \big(\varphi_\delta(y) K(y)\big)\eta(y)\Big)
\,\big|u(y)-u(0)\big|^2\,dy.
\end{equation}

We now perform some integration by parts in~$B_R$.
To this end, we use that the integrals involved in the computation are finite and that the boundary terms
on~$\partial B_R$ converge to zero as $R\to+\infty$,
thanks to
the decay of the kernel and of its derivatives assumed in~\eqref{KLIM} and~\eqref{KC1}.
More precisely, from~\eqref{QUI92} and~\eqref{QUI99}, and integrating by parts
twice, we find that
\begin{equation}\label{QUI107}
\begin{split}
\int_{\R^n} I_1(y)\,K(y)\,dy\,&
=\,\int_{\R^n}\eta(0)\,\varphi_\delta(y)\,\partial_e u(0)\,\partial_e u(y)\,K(y)\,dy\\
&=\,\int_{\R^n}\varphi_\delta(y)\,\eta(0)\,\partial_e u(0)\,\partial_e \big(u(y)-u(0)\big)\,K(y)\,dy
\\&=\,-
\int_{\R^n}\partial_e \big(\varphi_\delta(y) K(y)\big)\,\eta(0)\,\partial_e u(0) \big(u(y)-u(0)\big)\,dy
\\ &=\,
\int_{\R^n}\partial_e \big(\varphi_\delta(y) K(y)\big)\big(
\eta(y)\,\partial_e u(y)-\eta(0)\,\partial_e u(0) \big)\big(u(y)-u(0)\big)\,dy
\\&\qquad\qquad-
\int_{\R^n}\partial_e\big(\varphi_\delta(y) K(y)\big)\eta(y)\,\partial_e u(y) \big(u(y)-u(0)\big)\,dy
\\&
=\,
{ J_1 }-\frac12\,
\int_{\R^n}\partial_e \big(\varphi_\delta(y) K(y)\big)\,\eta(y)\,\partial_e 
\big|u(y)-u(0)\big|^2\,dy
\\&
=\,
{ J_1 }+\frac12\,
\int_{\R^n}{\partial_e}\,\Big( \partial_e \big(\varphi_\delta(y) K(y)\big)\eta(y)\Big)
\,\big|u(y)-u(0)\big|^2\,dy\\&
=\, { J_1 }+{ J_2 }.
\end{split}
\end{equation}

Furthermore, recalling again
the bound~\eqref{KC1} on the first and second derivatives
of the kernel and~\eqref{QUI100},
we point out that
\begin{equation}\label{QUI103}
\begin{split}&
\Big| \partial_e \big(\varphi_\delta(y) K(y)\big) \Big|\le
\big| \partial_e \varphi_\delta(y) \big|\,K(y)+\big|\varphi_\delta(y)\big|\,\big|
\partial_e K(y)\big|\\ &\qquad\le
C_{\delta,\eta} \big( K(y)+|y|\,\big|\partial_e K(y)\big|\big)
\le C_{\delta,\eta}\,K(y),\end{split}
\end{equation}
for some constant~$C_{\delta,\eta}>0$, possibly varying from line to line,
and depending 
only on~$n$, $\delta$,
$\|\eta\|_{C^2(\R^n)}$,
and on the structural constant~$\CUNO$
in~\eqref{KC1}
(for convenience, in what follows, we will rename~$C_{\delta,\eta}$
allowing dependences also
on~$s$ and on the constants~$\CDUEDUE$ and~$\CDUE$ in~\eqref{KLIM}).

In addition, using again~\eqref{KC1} and~\eqref{QUI100},
\begin{eqnarray*}
\Big|\partial^2_e\big(\varphi_\delta(y) K(y)\big)\eta(y)\Big|&\le&
C_{\delta,\eta}\,\left(
\Big|\partial_e^2\varphi_\delta(y) K(y)\Big|+
\Big|\partial_e \varphi_\delta(y)\,\partial_e K(y)\Big|+
\Big|\varphi_\delta(y) \,\partial_e^2 K(y)\Big|
\right)\\
&\le& C_{\delta,\eta}\,
\Big(K(y)+|y|\,\big|\partial_e K(y)\big|+|y|^2\big|D^2 K(y)\big|
\Big)\\
&\le& C_{\delta,\eta}\,K(y).
\end{eqnarray*}
Hence, from the latter estimate and~\eqref{QUI103},
\begin{equation}\label{QUI105}
\Big|\partial_e \,\Big(\partial_e \big(\varphi_\delta(y) K(y)\big)\eta(y)\Big)\Big|
\le C_{\delta,\eta}\,K(y).
\end{equation}

Thus, by the definitions~\eqref{QUI101} and~\eqref{QUI101E}
of~$J_1$ and~$J_2$, and the estimates~\eqref{QUI103} and~\eqref{QUI105},
using an appropriate Cauchy-Schwarz inequality
we get that
\begin{equation}\label{VECCHIA J1J2}\begin{split}&
|{ J_1 }|+|{ J_2 }|\\
&\qquad\le\, C_{\delta,\eta}\,\left(
\int_{\R^n}
\big|
\eta(y)\,\partial_e u(y)-\eta(0)\,\partial_e u(0) \big|\,\big|u(y)-u(0)\big|\,K(y)\,dy\right.\\&\qquad\qquad\qquad\left.+
\int_{\R^n} \big|u(y)-u(0)\big|^2\,K(y)\,dy\right)\\&\qquad
\le\,\frac18\,
\int_{\R^n}
\big|\eta(y)\,\partial_e u(y)-\eta(0)\,\partial_e u(0) \big|^2\,K(y)\,dy+C_{\delta,\eta}\,
\int_{\R^n} \big|u(y)-u(0)\big|^2\,K(y)\,dy.
\end{split}\end{equation}
This and~\eqref{QUI107} give that
\begin{equation}\label{QUI114}\begin{split}&
\int_{\R^n} I_1(y)\,K(y)\,dy\\&\qquad
\le\;
\frac18\,
\int_{\R^n}
\big|\eta(y)\,\partial_e u(y)-\eta(0)\,\partial_e u(0) \big|^2\,K(y)\,dy+C_{\delta,\eta}\,
\int_{\R^n} \big|u(y)-u(0)\big|^2\,K(y)\,dy.
\end{split}\end{equation}

Next, we notice that~$|{\mathcal{Z}}_\delta(y)|\le C\,|y|$, for every~$y\in\R^n$, for some constant~$C$ depending only
on~$n$. 
Thus, we have that
\begin{equation} \label{QUI109} J_3:=
\int_{B_1} |y|\, |{\mathcal{Z}}_\delta(y)|\,K(y)\,dy<+\infty.\end{equation}
In view of the definition~\eqref{QUI92BIS} of~$I_2$, 
noticing that~${\mathcal{Z}}_\delta$ is supported in~$B_1$,
and using a suitable Cauchy-Schwarz inequality,
we have that
\begin{equation}\label{QUI111}\begin{split}
&
\int_{\R^n} I_2(y)\,K(y)\,dy\\ &\qquad\le\,C_{\eta}\,
\int_{B_1} \big| \partial_e u(0)\big|
\big|\eta(0)\partial_e u(0)-\eta(y)\partial_e u(y)\big|\;|{\mathcal{Z}}_\delta(y)|\,K(y)\,dy\\&\qquad
\le\,
\frac18\,
\int_{\R^n} 
\big|\eta(0)\partial_e u(0)-\eta(y)\partial_e u(y)\big|^2\,K(y)\,dy
+
C_{\eta}\,
\int_{B_1} \big| \partial_e u(0)\big|^2\;|{\mathcal{Z}}_\delta(y)|^2\,K(y)\,dy\\&\qquad
\le\, \frac18\,
\int_{\R^n} 
\big|\eta(0)\partial_e u(0)-\eta(y)\partial_e u(y)\big|^2\,K(y)\,dy
+C_{\eta}\,\|\partial_e u\|_{L^\infty(B_1)}^2\,J_3
\end{split}\end{equation}
for some constant~$C_\eta>0$ depending only
on~$n$ and~$\| \eta\|_{C^2(\R^n)}$.
Similarly, recalling the definition~\eqref{I3}
of~$I_3$,
\begin{equation}\label{QUI112}\begin{split}
\int_{\R^n} I_3(y)\,K(y)\,dy\,\le\,&C_{\eta}\,
\int_{B_1}\big|\eta(y)-\eta(0)\big|\,
|\partial_e u(0)|\,|\partial_e u(y)|\;|
{\mathcal{Z}}_\delta(y)|\,K(y)\,dy\\
\le\,&C_{\eta}\,
\int_{B_1}|y|\,
|\partial_e u(0)|\,|\partial_e u(y)|\;|
{\mathcal{Z}}_\delta(y)|\,K(y)\,dy\\
\le\,&C_{\eta}\,\|\partial_e u\|_{L^\infty(B_1)}^2\,J_3.
\end{split}\end{equation}

Now we provide a bound on~$J_3$. To this aim, we split
the integral computation inside~$B_\delta$, where~${\mathcal{Z}}_\delta(y)=y$,
and in~$B_1\setminus B_\delta$, where~$|{\mathcal{Z}}_\delta(y)|\le 2\delta\,n^{1/2}$.
In this way, recalling the definition~\eqref{QUI109}
of~$J_3$, we get that
\begin{eqnarray*} |J_3|
&\le& C\,\left(
\int_{B_\delta} |y|^2\,K(y)\,dy
+\delta\int_{B_1\setminus B_\delta} |y|\, K(y)\,dy
\right),
\end{eqnarray*}
for some constant~$C$ depending only on~$n$.
The latter quantity tends to zero as~$\delta\searrow0$,
thanks to the bounds~\eqref{KLIM} on the kernel.
Given~$\e>0$, we can therefore
take~$\delta\in(0,1)$ sufficiently small,
in such a way that
\begin{equation*} |J_3|\le\e^2.\end{equation*}
After this choice of~$\delta$,
the constants~$C_{\delta,\eta}$ above
will be written accordingly,
with a slight abuse of notation, as~$C_{\e,\eta}$.
As a consequence, collecting the estimates in~\eqref{QUI114},
\eqref{QUI111}, and~\eqref{QUI112}, we conclude that
\begin{eqnarray*}&&
\int_{\R^n} \big(I_1(y)+I_2(y)+I_3(y)\big)\,K(y)\,dy\\&&\qquad\qquad
\le \frac14\,
\int_{\R^n}
\big|\eta(y)\,\partial_e u(y)-\eta(0)\,\partial_e u(0) \big|^2\,K(y)\,dy\\
&&\qquad\qquad\qquad+C_{\e,\eta}\,
\int_{\R^n} \big|u(y)-u(0)\big|^2\,K(y)\,dy+
C_{\eta}\,\e^2\,\|\partial_e u\|_{L^\infty(B_1)}^2.
\end{eqnarray*}
Therefore, recalling~\eqref{QUI220}, we deduce that
\begin{eqnarray*}&&
\int_{\R^n}\eta(0)\,\big( 
\eta(0)-\eta(y)\big)\partial_e u(0)\,\partial_e u(y)\,K(y)\,dy
\\&&\qquad\qquad
\le \frac14\,
\int_{\R^n}
\big|\eta(y)\,\partial_e u(y)-\eta(0)\,\partial_e u(0) \big|^2\,K(y)\,dy\\
&&\qquad\qquad\qquad+C_{\e,\eta}\,
\int_{\R^n} \big|u(y)-u(0)\big|^2\,K(y)\,dy+
C_{\eta}\,\e^2\,\|\partial_e u\|_{L^\infty(B_1)}^2.
\end{eqnarray*}
This establishes~\eqref{T6.3}, up to renaming~$\e$ and the constants,
by choosing~$\sigma_\e$ large enough.
As a consequence, also~\eqref{T6} follows.
\end{proof}

\subsection{``Downstairs''
proof of the first key inequality for $s$-harmonic functions}\label{9o23939398iww3456iw}

The next result is the particular
case of Proposition~\ref{Berns-frac} where $u$ is $s$-harmonic.
While the proof of Proposition~\ref{Berns-frac}
will rely on extension methods, we provide here a
proof of this particular case
without using the extension. Note that, as in Proposition~\ref{Berns-frac}, $\sigma_0$ is independent of~$s$.

\begin{lemma}\label{849027674843hf38}
Let~$\eta\in C^\infty(\R^n)\cap W^{2,\infty}(\R^n)$,
and assume that~$u\in W^{1,\infty}(\R^n)$
is a (weak) solution of
\begin{equation}\label{92i-9393}
(-\Delta)^s u =0 \quad{\mbox{ in }}B_1.
\end{equation}

Then, there exists~$\sigma_0>0$,
depending only on~$n$ and~$\| \eta\|_{C^2(\R^n)}$, such that
\begin{equation}\label{10-10-XT6}
(-\Delta)^s \big( \eta^2|\nabla u|^2+\sigma u^2\big)\le 0\quad{\mbox{ in }}B_1\quad
\text{ if } \sigma\ge \sigma_0.
\end{equation}\end{lemma}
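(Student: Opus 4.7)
The strategy is to translate the pointwise bound~\eqref{10-10-XT6} into an equivalent integral inequality (analogous to what Proposition~\ref{SUPERT-Intro} does for a single derivative), and then to verify the resulting inequality directly by exploiting the $s$-harmonicity of $u$. Repeating the algebraic identity in the proof of Proposition~\ref{SUPERT-Intro} with $(\partial_e u)^2$ replaced by $\sum_i(\partial_{e_i}u)^2=|\nabla u|^2$ over an orthonormal basis yields that, at each $x\in\R^n$, the pointwise inequality
\begin{equation*}
(-\Delta)^s\bigl(\eta^2|\nabla u|^2+\sigma u^2\bigr)(x)\le 2\eta^2(x)\,\nabla u(x)\cdot(-\Delta)^s\nabla u(x)+2\sigma u(x)\,(-\Delta)^s u(x)
\end{equation*}
is equivalent to
\begin{equation*}
2\eta(x)\int_{\R^n}\frac{(\eta(x)-\eta(y))\,\nabla u(x)\cdot\nabla u(y)}{|x-y|^{n+2s}}\,dy\;\le\;\int_{\R^n}\frac{|\eta(x)\nabla u(x)-\eta(y)\nabla u(y)|^2}{|x-y|^{n+2s}}\,dy+\sigma\int_{\R^n}\frac{(u(x)-u(y))^2}{|x-y|^{n+2s}}\,dy.
\end{equation*}
This is the $|\nabla u|^2$-version of Proposition~\ref{SUPERT-Intro} referred to as the modified key inequality in Appendix~\ref{RGiwq04}. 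By~\eqref{92i-9393} and the translation invariance of $(-\Delta)^s$, together with the interior smoothness of $s$-harmonic functions and the fact that $u\in W^{1,\infty}(\R^n)$ (which lets a difference-quotient argument pass to the limit inside the nonlocal operator), one has $(-\Delta)^s u(x)=0$ and $(-\Delta)^s\partial_{e_i}u(x)=0$ for every $i$ and every $x\in B_1$. Hence the RHS of the pointwise form vanishes on $B_1$, and it suffices to verify the displayed integral inequality at every $x\in B_1$.

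The plan for proving this integral inequality is to run the integration-by-parts scheme of the proof of Theorem~\ref{CONTO TRACCIA}: introduce the odd cutoff $\xi_\delta$ and the vector field $\mathcal{Z}_\delta$, use the parity identity $\int \nabla\eta(x)\cdot\mathcal{Z}_\delta(y-x)\,K(x-y)\,dy=0$ (coming from~\eqref{EVEN}) to extract an extra order of vanishing at $y=x$, and then integrate by parts in $y$ to transfer the gradient off $\nabla u(y)$ onto $\eta$ and $K$. The principal piece is controlled by a Cauchy-Schwarz against $\tfrac18\int|\eta(x)\nabla u(x)-\eta(y)\nabla u(y)|^2K\,dy+C_{\eta,\delta}\int(u(x)-u(y))^2K\,dy$, the second summand being absorbed into the $\sigma$-term once $\sigma\ge\sigma_0$ is chosen sufficiently large depending only on $n$ and $\|\eta\|_{C^2(\R^n)}$.

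The main obstacle is the absence here of the remainder $\e^2\|\partial_e u\|_{L^\infty(B_3)}^2$ that appears in Theorem~\ref{CONTO TRACCIA}: that error traces back to the auxiliary terms $I_2,I_3$ of its proof, each containing a bare factor of $\nabla u(x)$ outside the integral that Cauchy-Schwarz alone cannot absorb into the two RHS integrals. The point where $s$-harmonicity becomes essential is in removing precisely these residuals. Concretely, one uses the identity $(-\Delta)^s\partial_{e_i}u(x)=0$ in $B_1$ to re-express the bare factor $\nabla u(x)$ inside $I_2$ and $I_3$ as a weighted integral over $y$ of $\nabla u(y)-\nabla u(x)$ against a radial symmetric kernel (equivalently, of the combination $\eta(x)\nabla u(x)-\eta(y)\nabla u(y)$, up to an $\eta$-derivative error whose singularity is already tamed by $\mathcal{Z}_\delta$). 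A second Cauchy-Schwarz then bounds the resulting expressions by a controlled fraction of the first RHS integral plus $C\int(u(x)-u(y))^2K\,dy$, with no $L^\infty$-norm of $\nabla u$ surviving. Choosing $\sigma_0$ to dominate all the accumulated constants ---which remain independent of $s$ by virtue of the $s(1-s)$ normalization built into $K$--- closes the estimate and yields~\eqref{10-10-XT6}.
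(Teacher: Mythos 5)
Your reduction to the integral inequality is right: you correctly invoke the $|\nabla u|$-version of Proposition~\ref{SUPERT-Intro} (this is Proposition~\ref{NUOVA:P}), and you correctly observe that interior regularity of $s$-harmonic functions makes $(-\Delta)^s u(0)=0$ and $(-\Delta)^s\nabla u(0)=0$ available, so that one is left to verify the inequality with $E=0$ at each point of $B_1$.

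Where the argument breaks down is in the mechanism you propose for killing the residuals. You plan to run the $\xi_\delta,\mathcal{Z}_\delta$ scheme of Theorem~\ref{CONTO TRACCIA} and then ``re-express the bare factor $\nabla u(x)$ inside $I_2,I_3$ as a weighted integral'' via $(-\Delta)^s\partial_{e_i}u(x)=0$. This does not work: that identity is a principal-value statement, $\mathrm{PV}\int(\partial_{e_i}u(x)-\partial_{e_i}u(y))K(y)\,dy=0$, and since $\int K$ diverges it cannot be inverted to isolate $\nabla u(x)$ or to trade the free factor $|\nabla u(0)|^2$ multiplying $J_3$ for a controlled integral. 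The terms $I_2,I_3$ arising from the $\mathcal{Z}_\delta$ regularization genuinely carry an $\|\nabla u\|_{L^\infty}^2$ contribution, and $s$-harmonicity of the derivatives does not remove it. Indeed, the paper's proof does not use $\xi_\delta$ or $\mathcal{Z}_\delta$ at all, and it never appeals to $(-\Delta)^s\nabla u=0$: after two integrations by parts one lands on terms $T_1,T_2$, and the bad pieces are destroyed by appealing to $(-\Delta)^s u(0)=0$ itself, in the concrete form $\int(u(y)-u(0))K(y)\,dy=0$, after $\nabla u$ has been moved off the integrand and only a \emph{constant} prefactor $\eta(0)\nabla u(0)\cdot\nabla\eta(0)$ survives.

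You also miss the second crucial ingredient: the treatment of $T_2$ (the second-derivative-of-$K$ term) relies on the structural identity
$2(s+1)\nabla K(y)+y\,\Delta K(y)=0$,
which is specific to $K(y)=c_{n,s}|y|^{-n-2s}$. It converts a $D^2K$ weight into a $\nabla K$ weight, allowing a further integration by parts and another application of $\int(u(y)-u(0))K(y)\,dy=0$. Your proposal makes no provision for this; it implicitly treats the argument as if it worked for any kernel in $\mathcal{L}_2$, but that is precisely what Open problem~\ref{OP1} says is not known. In short: the skeleton (reduce to the integral inequality; verify it using $s$-harmonicity) is correct, but the actual mechanism you describe is both inoperative (no inversion is possible) and omits the pure-power kernel identity that makes the cancellation work.
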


\begin{proof}[Proof $($without using the extension problem$)$] 
Though the statement
of Lemma~\ref{849027674843hf38} is specific for the fractional Laplacian,
we perform the initial part of the proof arguing for a general kernel~$K$,
to isolate the only point where we will use that~$K(z)=c_{n,s}|z|^{-n-2s}$.

The proof relies on several integrations by
parts, which carefully take into account oscillations
and compensations inside the integrals.

We observe that, by regularity results for~\eqref{92i-9393}, $u\in C^\infty(B_1)$.
Exploiting Proposition~\ref{NUOVA:P}, in order
to prove~\eqref{10-10-XT6},
it suffices to show, by translation invariance, that
\begin{equation}\label{01234-11-T6.3}\begin{split}&
2\int_{\R^n}\eta(0)\,\big( 
\eta(0)-\eta(y)\big)\nabla u(0)\cdot\nabla u(y)\,K(y)\,dy\\ &\qquad\quad\leq\,
\int_{\R^n} \big|
\eta(0)\nabla u(0)-\eta(y)\nabla u(y)\big|^2\,K(y)\,dy+
\sigma\,\int_{\R^n}\big|u(0)-u(y)\big|^2\,K(y)\,dy
\end{split}
\end{equation}
knowing that
\begin{equation}\label{STARR}
(-\Delta)^s u (0)=0.
\end{equation}
To prove this, we call~$I_1$ the left-hand side of~\eqref{01234-11-T6.3}.
Integrating by parts, we have
\begin{equation}\label{01p10}
\begin{split}
I_1\,&=
2\int_{\R^n}\eta(0)\,\big( 
\eta(0)-\eta(y)\big)\nabla u(0)\cdot\nabla \big(u(y)-u(0)\big)\,K(y)\,dy\\
&=
-2\int_{\R^n}\eta(0)\,\nabla u(0) \big(u(y)-u(0)\big)\cdot
\nabla\Big( \big( 
\eta(0)-\eta(y)\big)\,K(y)\Big)\,dy.
\end{split}
\end{equation}
We remark that, to obtain this integration by parts identity,
one must argue in balls~$B_R$ and use that the boundary terms
on~$\partial B_R$ go to zero as~$R\to+\infty$ (as well as the integrability in~$\R^n$ of the
above functions), thanks to
the decay of the kernel
and of its derivatives assumed in~\eqref{KLIM} and~\eqref{KC1}.

Moreover, from the bound~\eqref{KC1}
on the first derivative of the kernel,
\begin{equation}\label{RIAS} \Big|
\nabla\Big( \big( 
\eta(0)-\eta(y)\big)\,K(y)\Big)\Big|\le
\big|\nabla\eta(y)\big|\,K(y)+
\big| 
\eta(0)-\eta(y)\big|\,\big|\nabla K(y)\big|
\le CK(y),\end{equation}
where, from now on, 
$C$ denotes different constants depending only on~$n$ and~$\|\eta\|_{C^2(\R^n)}$
(in particular, independent of~$s$ in our case, that is, when
the kernel~$K$ is that of the fractional Laplacian).
Therefore, we can bound~\eqref{01p10} as
\begin{equation*}
\begin{split}
I_1\,&=
2\int_{\R^n}\big(\eta(y)\,\nabla u(y)-\eta(0)\,\nabla u(0)\big) \big(u(y)-u(0)\big)\cdot
\nabla\Big( \big( 
\eta(0)-\eta(y)\big)\,K(y)\Big)\,dy\\&\quad\quad
-2\int_{\R^n}\eta(y)\,\nabla u(y) \big(u(y)-u(0)\big)\cdot
\nabla\Big( \big( 
\eta(0)-\eta(y)\big)\,K(y)\Big)\,dy
\\ &\le CI_2-2\int_{\R^n}\eta(y)\,\nabla u(y) \big(u(y)-u(0)\big)\cdot
\nabla\Big( \big( 
\eta(0)-\eta(y)\big)\,K(y)\Big)\,dy,
\end{split}
\end{equation*}
with
\begin{equation*}
I_2:=
\int_{\R^n}\big|\eta(y)\,\nabla u(y)-\eta(0)\,\nabla u(0)\big| \big|u(y)-u(0)\big|
K(y)\,dy.
\end{equation*}
Now, integrating by parts and using again~\eqref{RIAS}, we have
\begin{equation*}
\begin{split}
& 2\left| \int_{\R^n}\eta(y)\,\nabla u(y) \big(u(y)-u(0)\big)\cdot
\nabla\Big( \big( 
\eta(0)-\eta(y)\big)\,K(y)\Big)\,dy\right|\\&\qquad\quad
=\;\left| \int_{\R^n}\eta(y)\,\nabla \big|u(y)-u(0)\big|^2\cdot
\nabla\Big( \big( 
\eta(0)-\eta(y)\big)\,K(y)\Big)\,dy\right| \\&\qquad\quad=\;\left|
\int_{\R^n}\big|u(y)-u(0)\big|^2\, {\rm div}\Big(\eta(y)
\nabla\Big( \big( 
\eta(0)-\eta(y)\big)\,K(y)\Big)\Big)\,dy\right|\\&\qquad\quad \le\;
C
\int_{\R^n}\big|u(y)-u(0)\big|^2\,|\nabla\eta(y)|\,K(y)\,dy
\\&\qquad\quad\qquad\quad+\left|
\int_{\R^n}\big|u(y)-u(0)\big|^2\,\eta(y)\,
\Delta\Big( \big( 
\eta(0)-\eta(y)\big)\,K(y)\Big)\,dy\right|\\ &\qquad\quad\le\;
CI_3+2|T_1|
+|T_2|,\end{split}\end{equation*}
with
\begin{equation*}
I_3:=\int_{\R^n}\big|u(y)-u(0)\big|^2\,K(y)\,dy,
\end{equation*}
\begin{equation*}
T_1:=
\int_{\R^n}\big|u(y)-u(0)\big|^2\,\eta(y)\,
\nabla\eta(y)\cdot\nabla K(y) \,dy,\end{equation*}
and 
\begin{equation}\label{DTT}
T_2:=
\int_{\R^n}\big|u(y)-u(0)\big|^2\,\eta(y)\,
\big( 
\eta(0)-\eta(y)\big)\,\Delta K(y) \,dy.
\end{equation}

Clearly, $I_2$ and~$I_3$ are ``good terms'' which are controlled by the
right-hand side of~\eqref{01234-11-T6.3}. Hence, to bound~$I_1$ it remains to control~$|T_1|$ and~$|T_2|$.

To estimate~$T_1$, we observe that
\begin{equation*} 
\big|\nabla\eta(y)-\nabla\eta(0)\big|\,|\nabla K(y)|
\le C\,|y|\,|\nabla K(y)|
\le CK(y),\end{equation*}
thanks to the bound~\eqref{KC1}
on the first derivative of the kernel, and therefore, integrating by parts,
\begin{equation*}
\begin{split}
|T_1|\;&\le\left|
\int_{\R^n}\big|u(y)-u(0)\big|^2\,\eta(y)\,
\nabla\eta(0)\cdot\nabla K(y) \,dy\right|\\&\qquad
+\left|
\int_{\R^n}\big|u(y)-u(0)\big|^2\,\eta(y)\,
\big(\nabla\eta(y)-\nabla\eta(0)\big)\cdot\nabla K(y) \,dy\right|\\&\le\left|
\int_{\R^n}\nabla\Big(\big|u(y)-u(0)\big|^2\,\eta(y)\Big)\cdot
\nabla\eta(0)\, K(y) \,dy\right|
+CI_3\\&\le\left|
\int_{\R^n}\nabla\big|u(y)-u(0)\big|^2\,\eta(y)\cdot
\nabla\eta(0)\, K(y) \,dy\right|
+CI_3\\&\le 2\left|
\int_{\R^n}\big(u(y)-u(0)\big)\,\eta(y)\,\nabla u(y)\cdot
\nabla\eta(0)\, K(y) \,dy\right|
+CI_3\\
&\le2\left|
\int_{\R^n}\big(u(y)-u(0)\big)\,\eta(0)\,\nabla u(0)\cdot
\nabla\eta(0)\, K(y) \,dy\right|+C(I_2+I_3)\\
&=C(I_2+I_3),
\end{split}
\end{equation*}
where \eqref{STARR} has been used in the last line.

Finally, we estimate~$T_2$. To this end, we take~$\zeta\in C^\infty_c(B_1)$
with~$\zeta=1$ in~$B_{1/2}$, and we define
$$ T_3:=\int_{\R^n}\big|u(y)-u(0)\big|^2\,\eta(y)\,
\nabla\eta(0)\cdot y\,\zeta(y)\,\Delta K(y) \,dy.$$
Then,
by the second derivative bound~\eqref{KC1} on the kernel and~\eqref{DTT}, we have
\begin{equation}\label{0203487ow023939934}
\begin{split}
|T_2|\,&=\left|
\int_{\R^n}\big|u(y)-u(0)\big|^2\,\eta(y)\,
\big( 
\eta(0)-\eta(y)+\nabla\eta(0)\cdot y\,\zeta(y)\big)\,\Delta K(y) \,dy-T_3\right|\\
&\le C
\int_{\R^n}\big|u(y)-u(0)\big|^2\,|y|^2\,|D^2 K(y)| \,dy
+|T_3|\\&\le
CI_3+|T_3|.
\end{split}\end{equation}

Thus, it only remains to bound~$T_3$.
For this, when the kernel~$K$ is that of the fractional Laplacian, we have that
\begin{eqnarray*}&& 2(s+1)\nabla K(y)+y\Delta K(y)\\&&\qquad=-
c_{n,s} \,(n+2s)\,y\,\left( \frac{2(s+1)}{|y|^{n+2s+2}}
+\sum_{i=1}^n \left( \frac1{|y|^{n+2s+2}}-\frac{(n+2s+2)\,y_i^2}{|y|^{n+2s+4}}\right)
\right)=0.\end{eqnarray*}
Consequently, using once more
the first derivative bound~\eqref{KC1}
on the kernel, and integrating again by parts, we find that
\begin{eqnarray*}
|T_3|&=& 2(s+1)\,\left|
\int_{\R^n}\big|u(y)-u(0)\big|^2\,\eta(y)\,
\nabla\eta(0)\,\zeta(y)\cdot\nabla K(y) \,dy\right|
\\&\le& 2(s+1)\,\left|
\int_{\R^n}\big|u(y)-u(0)\big|^2\,\eta(y)\,
\nabla\eta(0)\,\zeta(0)\cdot\nabla K(y) \,dy\right|\\&&\quad
+C
\int_{\R^n}\big|u(y)-u(0)\big|^2\,\eta(y)\,|
\nabla\eta(0)|\,|y|\,|\nabla K(y)| \,dy
\\ &\le& 
2(s+1)\,\left|
\int_{\R^n} \nabla\Big(\big|u(y)-u(0)\big|^2\,\eta(y)\Big)\cdot
\nabla\eta(0)\,\zeta(0)\, K(y) \,dy\right|
+CI_3\\
\\ &\le& 
2(s+1)\,\left|
\int_{\R^n} \nabla \big|u(y)-u(0)\big|^2\,\eta(y)\cdot
\nabla\eta(0)\,\zeta(0)\, K(y) \,dy\right|
+CI_3\\
&=& 
4(s+1)\,\left|
\int_{\R^n} \big(u(y)-u(0)\big)\,\eta(y)\,
\nabla u(y)\cdot
\nabla\eta(0)\,\zeta(0)\, K(y) \,dy\right|
+CI_3\\&\le&
4(s+1)\,\left|
\int_{\R^n} \big(u(y)-u(0)\big)\,\eta(0)\,
\nabla u(0)\cdot
\nabla\eta(0)\,\zeta(0)\, K(y) \,dy\right|
\\&&\quad+
4(s+1)\,\left|
\int_{\R^n} \big(u(y)-u(0)\big)\,\big(\eta(y)\,
\nabla u(y)-\eta(0)\,\nabla u(0)\big)\cdot
\nabla\eta(0)\,\zeta(0)\, K(y) \,dy\right|\\&&\quad
+CI_3\\
&\le & 0+C(I_2+I_3),
\end{eqnarray*}
where we have exploited~\eqref{STARR} once more in the last step.
\end{proof}

\section{Linearized operator and a maximum estimate}\label{61}

\subsection{The linearized operator}\label{sub41s}

Here we study the linearized equation
associated with the nonlinear problem \eqref{EQ}. To simplify notation,
we denote by~${\mathcal{L}}_1,\dots,{\mathcal{L}}_J$ the linear operators~${\mathcal{L}}_{\mu_1},\dots,
{\mathcal{L}}_{\mu_J}$.
We will always assume that the convexity and ellipticity conditions~\eqref{CONV:POSITIVA},
\eqref{MON:F},
and~\eqref{CONV:F} are satisfied,
for some constants~$\Theta_0\ge\vartheta_0>0$. 

Given a function~$u$,
we use the short notation
\begin{equation}\label{defalphak} \alpha_j:=
\alpha_j
\big({\mathcal{L}}_1u(x)-g_1(x),\dots,{\mathcal{L}}_Ju(x)-g_J(x)\big)\end{equation}
and
we consider the operator
\begin{equation}\label{L:EQ:OP} 
Lv := \sum_{j=1}^J \alpha_j \,{\mathcal{L}}_{j}v.\end{equation}
It will be important that the functions $\alpha_{j}$ are defined at all points of our domain ---and not only almost everywhere--- since we will need to evaluate them at a maximum point of an auxiliary function. This will always be
possible since ${\mathcal{L}}_j u$ and $g_{j}$ will be finite and well defined at all points, by the regularity assumed on $u$ and since  $g_{j}$ are continuous functions.

The relevance of the linearized operator~$L$
is given by the fact that the solution and its derivatives
satisfy suitable inequalities with respect to~$L$, as stated in
the following result. Here, we remark that since~$g_j$ is not better than semiconcave, its first and second derivatives only exist almost everywhere.

\begin{lemma}\label{SUBSOLU:EXTE}
Let~$g_j$
be Lipschitz functions in~$B_1$ for~$j=1,\dots,J$,
$u\in C^\infty(B_1)\cap W^{1,\infty}(\R^n)$ be a solution of~\eqref{EQ}
everywhere
in~$B_1$, and $e\in\R^n$ satisfy~$|e|=1$.

Then,
\begin{eqnarray*}
&& Lu\ge -F(-g_1,\dots,-g_J)}
\quad{\mbox{ everywhere in }B_1\end{eqnarray*}
and
\begin{eqnarray*}
&& L\,\partial_{e} u
=
\sum_{j=1}^J \alpha_j\partial_e g_j
\quad{\mbox{ almost everywhere in }}B_1.
\end{eqnarray*}
In particular,
$$ Lu\ge -|F(0)|-\Theta_0\sup_{j\in\{1,\dots,J\}}\|g_j\|_{L^\infty(B_1)}\quad{\mbox{ everywhere in }}B_1.$$

If in addition the functions~$g_j$
are semiconcave in~$B_1$ and~$u\in W^{2,\infty}(\R^n)$, then
\[ L\,\partial_e^2 u \le 
\sum_{j=1}^J \alpha_j\partial_e^2 g_j
\quad{\mbox{ almost everywhere in }}B_1.
\]
\end{lemma}

\begin{proof} We prove the first statement
by using equation~\eqref{EQ}, in combination
with the convexity assumption~\eqref{CONV:F}, used here
with~$q:=(-g_1,\dots,-g_J)$ and~$p:=({\mathcal{L}}_1 {u}-g_1,
\dots,{\mathcal{L}}_J {u}-g_J)$. In this way,
we have that, everywhere in~$B_1$,
\begin{eqnarray*} 
&& F(-g_1,\dots,-g_J)-F({\mathcal{L}}_1 {u}-g_1,
\dots,{\mathcal{L}}_J {u}-g_J)
\ge -\sum_{j=1}^J\alpha_j({\mathcal{L}}_1 {u}-g_1
,\dots,{\mathcal{L}}_J {u}-g_J)\,
{\mathcal{L}}_j{u},
\end{eqnarray*}
which establishes the first statement.

Now, by~\eqref{CONV:POSITIVA}, \eqref{MON:F} and~\eqref{CONV:F} (exploited 
 with~$q:=0$ and~$p:=(-g_1,\dots,-g_J)$), we see that
\begin{eqnarray*}&&
|F(0)|-F(-g_1,\dots,-g_J)\ge
F(0)-F(-g_1,\dots,-g_J)\ge\sum_{j=1}^J\alpha_j(-g_1,\dots,-g_J)g_j
\\&&\qquad\ge
-\sum_{j=1}^J \alpha_j(-g_1,\dots,-g_J) \|g_j\|_{L^\infty(B_1)}\ge
-\sum_{j=1}^J \alpha_j(-g_1,\dots,-g_J) \sup_{\ell\in\{1,\dots,J\}}\|g_\ell\|_{L^\infty(B_1)}\\&&\qquad
\ge-\Theta_0\sup_{\ell\in\{1,\dots,J\}}\|g_\ell\|_{L^\infty(B_1)}.
\end{eqnarray*}
This observation and the first statement of the lemma lead to the third one.

Now we prove the second statement. To this end,
we let~$\e>0$ and exploit the convexity assumption~\eqref{CONV:F}
at~$x\in B_1$,
with
\begin{equation}\label{qep:1} q:=
({\mathcal{L}}_1u(x\pm \e e)-g_1(x\pm \e e)
,\dots,{\mathcal{L}}_Ju(x\pm \e e)-g_J(x\pm \e e))
\end{equation}
and
\begin{equation}\label{qep:2}
p:=
({\mathcal{L}}_1u(x)-g_1(x),\dots,
{\mathcal{L}}_Ju(x)-g_J(x)).\end{equation}
Using~\eqref{EQ}, this leads to
\begin{eqnarray*}
0
&=&\frac{1}{\e}
\Big\{
F\big({\mathcal{L}}_1u(x\pm \e e)
-g_1(x\pm \e e),\dots,{\mathcal{L}}_Ju(x\pm \e e)-g_J(x\pm \e e)\big)
\\&&\qquad-F\big({\mathcal{L}}_1u(x)
-g_1(x),\dots,{\mathcal{L}}_Ju(x)-g_J(x)\big)
\Big\}
\\
&\ge&\sum_{j=1}^J
\alpha_j
\big({\mathcal{L}}_1u(x)-g_1(x),\dots,{\mathcal{L}}_Ju(x)-g_J(x)\big)\\
&&\qquad\cdot\frac{
{\mathcal{L}}_j u(x\pm \e e)-{\mathcal{L}}_j u(x)
-g_j(x\pm \e e)+g_j(x)
}{\e}.\end{eqnarray*}
Taking the limit as~$\e\searrow0$, we thereby find that,
for almost
every~$x\in B_1$,
\begin{eqnarray*} 
0&\ge&\pm\sum_{j=1}^J
\alpha_j
\big({\mathcal{L}}_1u(x)-g_1(x),\dots,{\mathcal{L}}_Ju(x)-g_J(x)\big)
\big({\mathcal{L}}_j\partial_e u(x)-\partial_eg_j(x)\big)\\&=&
\pm L\,\partial_eu(x)\mp\sum_{j=1}^J
\alpha_j \partial_eg_j (x).
\end{eqnarray*}
{F}rom this, we deduce the second statement.

Finally, we prove the last statement.
For this,
exploiting again~\eqref{CONV:F}
with~$q$ and~$p$ as above,
we have that
\begin{eqnarray*}
0&=&\frac{1}{\e^2}\,\Bigg\{\Big(
F\big({\mathcal{L}}_1u(x+ \e e)-g_1(x+ \e e),\dots,{\mathcal{L}}_Ju
(x+ \e e)-g_J(x+ \e e)\big)\\&&\qquad\qquad
-F\big({\mathcal{L}}_1u(x)-g_1(x),\dots,{\mathcal{L}}_Ju(x)-g_J(x)\big)\Big)\\&&\qquad\qquad+
\Big( 
F\big({\mathcal{L}}_1u(x-\e e)-g_1(x-\e e),\dots,{\mathcal{L}}_J
u(x- \e e)-g_J(x-\e e)\big)\\&&\qquad\qquad
-F\big({\mathcal{L}}_1u(x)-g_1(x),\dots,{\mathcal{L}}_Ju(x)-g_J(x)\big)\Big)\Bigg\}
\\
&\ge&\sum_{j=1}^J
\alpha_j
\big({\mathcal{L}}_1u(x)-g_1(x),\dots,{\mathcal{L}}_Ju(x)-g_J(x)\big)\\
&&\qquad\qquad\cdot\Bigg\{ \frac{
\big(
{\mathcal{L}}_ju(x+\e e)-{\mathcal{L}}_ju(x)\big)+
\big( {\mathcal{L}}_ju(x-\e e)-{\mathcal{L}}_ju(x)\big)
}{\e^2}\\
&&\qquad\qquad\qquad-\frac{\big(g_j(x+\e e)-g_j(x)\big)+\big(g_j(x-\e e)-g_j(x)\big)}{\e^2}\Bigg\}
\\
&=&\sum_{j=1}^J
\alpha_j
\;\Bigg\{
\frac{
{\mathcal{L}}_ju(x+\e e)+
{\mathcal{L}}_ju(x-\e e)-2{\mathcal{L}}_ju(x)
}{\e^2}\\&&\qquad\qquad\qquad-
\frac{
g_j(x+\e e)+
g_j(x-\e e)-2g_j(x)
}{\e^2}
\Bigg\}.
\end{eqnarray*}
Sending~$\e\searrow0$, we conclude that~$0\ge\sum_{j=1}^J
\alpha_j
\,\Big( {\mathcal{L}}_j\partial^2_e u(x)-\partial^2_e g_j(x)\Big)$
for almost
every~$x\in B_1$,
as desired.
\end{proof}

\subsection{A maximum estimate}

Here we show that the linearized operator~$L$ in~\eqref{L:EQ:OP}
satisfies the maximum principle, as well as a quantitative maximum estimate in the case
of nonzero right-hand sides.
This is the content of the following result, which will be proved using a barrier function. As customary, we use the notation~$C_{\rm b}(\R^n):=C(\R^n)\cap L^\infty(\R^n)$ to denote the space of bounded and continuous
functions over all~$\R^n$.

\begin{proposition}\label{ujMAX}
Let~$\gamma_0\ge0$.
Let~$\varphi\in C_{\rm b}(\R^n)\cap {W^{2,\infty}(B_1)}$ be a
nonnegative
function in~$\R^n$ such that
\begin{equation}\label{98suidvyfzsdaq}
L\varphi\le\gamma_0\quad{\mbox{ everywhere in }}B_1,
\end{equation}
where~$L$ is given by~\eqref{defalphak}--\eqref{L:EQ:OP} for some~$u\in C^\infty(B_1)\cap W^{1,\infty}(\R^n)$
and continuous functions~$ g _1,\dots, g _J$ in~$B_1$.

Then,
\begin{equation} \label{DClap}\sup_{{B_1}}\varphi \le
\sup_{\R^n\setminus{B_1}}\varphi +C\,\gamma_0,\end{equation}
where $C$ depends only on $n$, $\vartheta_0$, and~$\Theta_0$.
Recall that~$\vartheta_0$ and $\Theta_0$ are the constants in
hypothesis~\eqref{MON:F}.
\end{proposition}

\begin{remark}\label{R35}
{\rm 
Concerning the statement~\eqref{98suidvyfzsdaq},
we stress that, in our applications, we will need to use Proposition~\ref{ujMAX}
for~$C_{\rm b}(\R^n)\cap {W^{2,\infty}(B_1)}$ functions~$\varphi$
which are not~$C^2$. For instance, in some cases the auxiliary function $\varphi$ will contain a term involving~$(\partial_e^2u)_+^2$ (note that this function is locally~$W^{2,\infty}$ when~$u$ is smooth, even if~$(\partial_e^2u)_+$ is not~$W^{2,\infty}$
in general). Thus, for a $C_{\rm b}(\R^n)\cap {W^{2,\infty}(B_1)}$ function~$\varphi$,
we now define
a precise meaning to~\eqref{98suidvyfzsdaq} at {\it all} points of~$B_1$,
obtaining a finite value for~$L\varphi$.

First, we use the usual
principal value definition for all the integro-differential operators considered through the paper, since the regularity of $\varphi$ is sufficient
to compute the integrals (involved in $L\varphi$) in the principal value sense, obtaining a finite value.

Secondly, when $L\varphi$ involves computing the classical Laplacian ---as it may be the case for ${\mathcal{L}}_\mu$ in \eqref{OP:L}---, we define~$-\Delta\varphi$
through second incremental quotients as
\begin{equation}\label{CAV}
-\Delta\varphi(x):=\liminf_{h\searrow0}\sum_{i=1}^n\frac{2\varphi(x)-\varphi(x+he_i)-\varphi(x-he_i)}{h^2},
\end{equation}
which is finite since~$\varphi\in W^{2,\infty}$.
As a matter of fact, in our applications, inequality~\eqref{98suidvyfzsdaq}
will be satisfied even when writing~\eqref{CAV} with a $\limsup$ instead of~$\liminf$.
}
\end{remark}

To establish Proposition~\ref{ujMAX}, we start giving an auxiliary barrier function.

\begin{lemma}\label{02-3-45}
Let~$s\in(0,1)$ and
\begin{equation}\label{BETA:DEF} \beta(x):= \left\{ \begin{matrix}
|x|^2 -2 & {\mbox{ if $x\in B_{10}$, }}\\
98 & {\mbox{ otherwise.}}
\end{matrix}
\right.\end{equation}

Then, if $K$ satisfies~\eqref{EVEN}
and~\eqref{KLIM}, we have that
\begin{equation}\label{CASO:K}
{\mathcal{L}}_K \beta\le -c\quad{\mbox{everywhere in }}B_1,\end{equation}
for some constant~$c>0$ depending only on~$n$ and
the structural constant~$\CDUEDUE$ in~\eqref{KLIM}.

Similarly, with~${\mathcal{L}}_\mu$ defined by~\eqref{1.0}--\eqref{OP:L},
we have that
\begin{equation}\label{CASO:MU}
{\mathcal{L}}_\mu \beta\le -c\quad{\mbox{everywhere in }}B_1,\end{equation}
for some constant~$c>0$ depending only on~$n$.

In particular, for every~$s\in[0,1]$,
\begin{equation}\label{CASO:MU:LA}
(-\Delta)^s \beta\le -c\quad{\mbox{everywhere in }}B_1,\end{equation}
for some constant~$c>0$ depending only on~$n$.
\end{lemma}

\begin{proof} Let us start by proving~\eqref{CASO:K}.
Let~$x\in B_1$.
We observe that if~$z\in B_{9}$ then~$|x\pm z|\le|x|+|z|<10$, and hence~$
\beta(x\pm z)
=|x\pm z|^2-2
$.
As a consequence, for every~$x\in B_1$ and~$z\in B_{9}$,
\begin{equation}\label{HneIJA-a}
\begin{split}& \beta(x+z)+\beta(x-z)-2\beta(x)
\\&\qquad\quad =\big(
|x|^2+|z|^2+2x\cdot z-2\big)+\big(
|x|^2+|z|^2-2x\cdot z-2\big)-2\big(
|x|^2-2\big)= 2|z|^2.\end{split}\end{equation}
Accordingly
\begin{equation}\label{891-234}
\int_{B_{9}} \Big( \beta(x+z)+\beta(x-z)-2\beta(x)\Big)\,K(z)\,dz=
2\int_{B_{9}} |z|^2\,K(z)\,dz.
\end{equation}

On the other hand, if~$z\in \R^n\setminus B_{9}$,
we have that~$|x\pm z|\ge|z|-|x|\ge8$.
Therefore, for every~$z\in \R^n\setminus B_{9}$,
we have that~$\beta(x\pm z)\ge62$.
Since~$\beta(x)\le-1$, we find in this case that
\begin{equation*}
\int_{\R^n\setminus B_9} \Big( \beta(x+z)+\beta(x-z)-2\beta(x)\Big)\,K(z)\,dz
\ge 126\int_{\R^n\setminus B_9} K(z)\,dz.
\end{equation*}
This inequality 
and~\eqref{891-234},
together with the symmetry~\eqref{EVEN} of the kernel
and the lower bound~\eqref{KLIM},
lead to
\begin{eqnarray*}
-{\mathcal{L}}_K \beta(x)&=&
\frac12\,\int_{\R^n}\big( \beta(x+z)+\beta(x-z)-2\beta(x)\big)\,K(z)\,dz\\
&\ge& \int_{B_{9}} |z|^2\,K(z)\,dz
+\int_{\R^n\setminus B_9} K(z)\,dz\\&\ge&
\CDUEDUE\;s\,(1-s)\,\left(
\int_{B_{9}} \frac{dz}{|z|^{n+2s-2}}
+\int_{\R^n\setminus B_9} \frac{dz}{|z|^{n+2s}}
\right)\\&\ge&
\widetilde{C}\,\CDUEDUE,\end{eqnarray*}
for some constant~$\widetilde C$ only depending on~$n$. This gives the desired result~\eqref{CASO:K}.

Now we prove~\eqref{CASO:MU}.
For this, we let~$s\in(0,1)$
and apply~\eqref{CASO:K} with~$K(z):=c_{n,s}\,|z|^{-n-2s}$, obtaining~\eqref{CASO:MU:LA}
with~$c>0$ depending only on~$n$
(since it is well known that for this kernel one may take~$C_1$ to depend only on~$n$).
We also notice that~$-\Delta \beta =-2n$
and~$\beta\le -1$ in~$ B_1$, and therefore~\eqref{CASO:MU:LA}
is satisfied also for~$s=0$
and~$s=1$, up to renaming~$c$. Consequently, the claim~\eqref{CASO:MU} follows
from
the definition~\eqref{OP:L} of~${\mathcal{L}}_\mu$.

Note that claim~\eqref{CASO:MU:LA},
that we have already proved,
can also be regarded as a particular case of~\eqref{CASO:MU} by taking~$\mu$ to be a Dirac's delta.
\end{proof}

With the above barrier at hand, we are in the position of proving the maximum principle
with estimate which
is suitable for our goals.

\begin{proof}[Proof of Proposition~\ref{ujMAX}]
We take~$\beta$ as in Lemma~\ref{02-3-45}
and define
\begin{equation}\label{09p29} \varphi_*(x):=\varphi(x)+ \frac{
C\,\gamma_0}{100}\,\beta(x).
\end{equation}
We claim that
\begin{equation}\label{8u8g8fia82}
\sup_{ \R^n} \varphi_*
=\sup_{ \R^n\setminus B_1} \varphi_*\,.
\end{equation}
Once this is proved, from
the fact that~$-2\le\beta\le98$, we deduce
$$ \sup_{ B_1} \varphi\le\sup_{ B_1} \varphi_* 
+\frac{2C\,\gamma_0}{100}\leq
\sup_{ \R^n\setminus B_1} \varphi_* 
+\frac{2C\,\gamma_0}{100}\le
\sup_{ \R^n\setminus B_1} \varphi +\frac{98\,C\,\gamma_0}{100}
+\frac{2C\,\gamma_0}{100}
$$
and conclude~\eqref{DClap}.

To prove claim~\eqref{8u8g8fia82}, it is enough to establish that
\begin{equation}\label{8u8g8fia82-PRE}
\sup_{ \R^n} \varphi_*
=\sup_{ \R^n\setminus B_{\rho}} \varphi_*\,
\end{equation}
for every~$\rho\in(0,1)$, since~$\varphi_*$ is continuous in~$\R^n$.
For this,
we argue by contradiction and
assume that
there exists~$x_*\in\overline{ B}_{\rho}\subset B_1$ such that~$ \varphi_*(x_*)=\sup_{\R^n}\varphi_*$.
This leads to $(-\Delta)^s\varphi_*(x_*)\ge0$ for all~$s\in(0,1)$.
We notice that~$(-\Delta)^s\varphi_*(x_*)\ge0$ when~$s=1$,
in view of~\eqref{CAV}, and also when~$s=0$, since
$$ \varphi_*(x_*)=\sup_{\R^n}\varphi_*\ge\sup_{x\in\R^n\setminus
B_{\sqrt{2}}}\left(\varphi(x)
+ \frac{
C\,\gamma_0}{100}\,\beta(x)\right) \geq0.$$
We therefore obtain that
\begin{equation*}
{\mathcal{L}}_{\mu_j} \varphi(x_*)\ge0\end{equation*}
for all~$j\in\{1,\dots,J\}$.
Finally, recalling
that~$\alpha_j\ge0$ by~\eqref{CONV:POSITIVA}
and the definition~\eqref{L:EQ:OP}
of~$L$,
we deduce
\begin{equation}\label{9574u5iy84uyi45jtk}
L\varphi(x_*)\ge0.\end{equation}

Now, in light of~\eqref{CASO:MU}, we have that, for all~$j\in\{1,\dots,J\}$,
$$ {\mathcal{L}}_{\mu_j}\beta\le-c 
\qquad {\mbox{everywhere in~$B_1$,}}$$
and consequently, recalling the lower bound in~\eqref{MON:F} on~$\alpha_j$,
$$ L\beta \le-c\sum_{j=1}^J\alpha_j\le-c\,\vartheta_0\qquad {\mbox{everywhere in~$B_1$.}}$$
Here~$c$ is the constant in Lemma~\ref{02-3-45}.
Making use of this inequality, \eqref{09p29}, and~\eqref{9574u5iy84uyi45jtk},
and observing that~$x_*\in\overline{B}_{\rho}\subset B_1$,
we conclude
$$ L\varphi(x_*)=L\varphi_*(x_*)- \frac{
C\,\gamma_0}{100}\,L\beta(x_*)\ge \frac{c\,C\,\vartheta_0\,\gamma_0}{100}.
$$
This, combined with~\eqref{98suidvyfzsdaq}, gives that~$
c\,C\,\vartheta_0/{100}\le1$,
which provides a contradiction if~$C$ is taken large enough, depending only on~$n$
and~$\vartheta_0$.

This argument establishes
claim~\eqref{8u8g8fia82-PRE} and finishes the proof.\end{proof}

\section{A unified approach towards derivative estimates}\label{P98765vgyuE1}

Most of our theorems
will follow from our next result, Theorem~\ref{sec:XT}.
It deals with solutions of ``linear operators
possibly varying from point to point and satisfying a maximum principle
with estimate''.
To state our result precisely, we consider a family of
linear operators~$\{L^{(x)}\}_{x\in B_1}$,
of any of the types that we have considered previously in the paper,
and we set the following terminology.

\begin{definition}\label{BENE}
We say that the family~$\{
L^{(x)}\}_{x\in B_1}$ satisfies the maximum principle with
estimate in~$B_1$ if
there exists a constant~$C$
such that,
for every~$\gamma_0\ge0$ and every nonnegative function~$
\varphi\in C_{\rm b}(\R^n)\cap {W^{2,\infty}(B_1)}$,
the following statement holds true:
if 
\begin{equation}\label{MAXPART1} 
\inf_{y\in B_1}\{ L^{(y)}\varphi(x)\}\le\gamma_0
\qquad{{\mbox{for all }}}\;x\in B_1,
\end{equation}
then
\begin{equation}\label{MAXPART}
\sup_{{B_1}}\varphi \le
\sup_{\R^n\setminus{B_1}}\varphi +C\gamma_0.
\end{equation}
\end{definition}

We recall that the computation of~$L^{(y)}\varphi$
with~$\varphi$ only of class~$C_{\rm b}(\R^n)\cap {W^{2,\infty}(B_1)}$ is intended in the light of Remark~\ref{R35}
(in particular, when~$L^{(y)}$ involves classical second order operators,
the incremental quotient setting
in~\eqref{CAV} must be adopted).
This comment concerns~\eqref{MAXPART1} and also the left-hand side
of~\eqref{L-EQAU-B1} below. Note, instead, that
the right-hand side of~\eqref{L-EQAU-B1}
is well defined and finite since the
functions~$\partial^2_eu$, $\partial_eu$, and~$u$
are assumed in the theorem to be
smooth and globally bounded.

We now
provide a result that will 
establish first and one-sided second derivative
bounds in the case of operators possessing a local extension. It will serve both for the Pucci-type operators as well as for our other fully nonlinear equations
defined through the function~$F$. In addition, the same method, properly modified, will be used for equations with no extension property.

We introduce the auxiliary function
\begin{equation}\label{78AJJQoeAAiak0IIA}
\varphi(x) :=\overline\eta^2(x)\,\big(\partial_e^2u(x)\big)^2_+
+ \tau \,{{\eta}}^2(x)\big(\partial_e u(x)\big)^2+
\sigma \,\left(u(x)-\sup_{B_1} u\right)^2,\end{equation}
where~$\overline\eta$ and~$\eta
$ are smooth functions with compact support in~$B_{1/2}$
and~$B_1$, respectively. In addition, we will need that~$\eta=1$
in~$B_{1/2}$ in order to be able to verify
the key inequality~\eqref{L-EQAU-B1} below.
As we will see in the proof of the following theorem, the first
and second derivative bounds~\eqref{LAPRIMAGENE}
and~\eqref{LASECOGENE} will follow from making
two different appropriate choices of the
pair of cutoff functions~$(\overline\eta,{\eta})$.

Let us explain why we are forced,
in this nonlocal theory, to consider
the auxiliary function~\eqref{78AJJQoeAAiak0IIA}
involving both~$\partial_e u$ and~$
\partial_e^2u$ at the same time, as well as two different cutoff functions. This
has not been considered in the local theory. Indeed,
for local equations one works first with~$\eta^2(\partial_e u)^2+\sigma
u^2$ as in~\eqref{FACIL0}, obtaining first derivative
estimates in a ball. One considers next~$\overline\eta^2(\partial_e v)_+^2+\sigma
v^2$ with~$v=\partial_e u$,
as in~\eqref{eqn-v-pos}. This allows us
to control~$\partial_e^2 u$
from above in a smaller ball by the previous control of~$\partial_e u$
in the larger ball. However, since in the nonlocal setting the maximum
principle involves the whole exterior datum
(and not only the boundary datum), this second
choice would require to control the first derivatives~$v=\partial_e u$
in all space, which cannot be achieved for an equation posed
in a ball. We solve this trouble by introducing the new
auxiliary function~\eqref{78AJJQoeAAiak0IIA}.
Note anyway that the simpler choice~\eqref{eqn-v-pos}
would still work if our nonlocal equation were posed in all of~$\R^n$.
This is why we have chosen to refer to this simpler test function
in the Introduction, especially in order to make the Open problem~\ref{OP2BIS}
as simple as possible.

\begin{theorem}\label{sec:XT} Let~$\sigma\ge1$ and $\tau\ge1$.
Assume that~$\{
L^{(x)}\}_{x\in B_1}$ is a family of linear operators
satisfying the maximum principle with estimate in~$B_1$,
with constant~$C$, according to Definition~\ref{BENE}.

Given any~$e\in\R^n$ with~$|e|=1$,
$\overline\eta\in C^\infty_c(B_{1/2})$,
${\eta}\in C^\infty_c(B_1)$
with~$\eta=1$ in~$B_{1/2}$,
and~$u\in C^\infty(\R^n)\cap W^{2,\infty}(\R^n)$,
consider the function~$\varphi$ in~\eqref{78AJJQoeAAiak0IIA}.

Assume that, for every such choice of~$\sigma$, $\tau$, $e$, $\overline\eta$, $\eta$, and~$u$,
we have
\begin{equation} \label{L-EQAU-B1}\begin{split}
L^{(x)}\varphi(x)
\le\,&
2\overline\eta^2(x)\,(\partial_e^2 u(x))_+\,L^{(x)}\partial_e^2 u(x)\\&\quad
+ 2\tau\,{\eta}^2(x)\,\partial_e u(x)\,
L^{(x)}\partial_e u(x)+2\sigma \,\left(u(x)-\sup_{B_1}u\right)\,
L^{(x)} {\left(u-\sup_{B_1}u\right)}(x)
\end{split}\end{equation}
for
every~$x\in B_1$.

We then have,
for every~$u\in C^\infty(\R^n)\cap W^{2,\infty}(\R^n)$,
\begin{equation}\label{LAPRIMAGENE}
\sup_{B_{1/2}} |\partial_e u|\le 
C\,\Big(C a_1+ \big(C a_0\|u\|_{L^\infty(B_1)}\big)^{1/2}+
\|u\|_{L^\infty(\R^n)}
\Big)\end{equation}
and
\begin{equation}\label{LASECOGENE}
\sup_{B_{1/4}} \partial^2_e u\le C\,\Big(
C a_2+C a_1
+\big(C a_0 \| u\|_{L^\infty(B_1)}\big)^{1/2}+ \| u\|_{L^\infty(\R^n)}
\Big),\end{equation}
where
\begin{equation}\label{1NOTA a 012}
a_0:=\sup_{x\in B_1} \left(L^{(x)}{\left(u-\sup_{B_1}u\right)}(x)\right)_- \, ,
\end{equation}
\begin{equation}\label{2NOTA a 012}
a_1:=\sup_{x\in B_1} \big|L^{(x)}\partial_e u( x)\big|,\end{equation} 
\begin{equation}\label{3NOTA a 012}
a_2:=\sup_{x\in B_1}\big(
L^{(x)}\partial_e^2 u( x)\big)_+\,,
\end{equation}
and~$C$ is a constant depending only on~$n$,
$\sigma$, and~$\tau$.
\end{theorem}

Note that the functions~$\varphi$ in~\eqref{78AJJQoeAAiak0IIA}
belong to $C_{\rm b}(\R^n)\cap {W^{2,\infty}(B_1)}$, as required in the previous section,
by basic properties
of the positive part and the square power
appearing
in~$\big(\partial_e^2u\big)^2_+$, and since~$\partial_e^2u\in C^\infty(\R^n)$.

\begin{proof}[Proof of Theorem~\ref{sec:XT}]
We let
\[
\Phi(x):=a_2\,
\overline\eta^2(x)\,(\partial_e^2 u(x))_+
+a_1  {\eta}^2(x)\,|\partial_e u(x)|
+a_0 \| u\|_{L^\infty(B_1)}.\]
We claim that
\begin{equation}\label{9OA-CLAI}
\sup_{{B_1}}\varphi
\le C_\sharp\left(C
\sup_{B_1} \Phi+\| u\|_{L^\infty(\R^n)}^2
\right)\end{equation}
for some constant~$C_\sharp\ge1$ depending only on~$n$,
$\sigma$, and~$\tau$.
To check this, we use~\eqref{L-EQAU-B1} to find that, for all~$x\in B_1$,
\begin{eqnarray*} 
\inf_{y\in B_1}\{ L^{(y)}\varphi(x)\}&\le&
L^{(x)}\varphi(x)
\\&\le&
2\overline\eta^2(x)\,(\partial_e^2 u( x))_+
\,L^{( x)}\partial_e^2 u( x)
+ 2\tau \, {\eta}^2( x)\,\partial_e u( x)\,
L^{( x)} \partial_e u( x)\\&&\qquad+2\sigma \,\left(u( x)-
\sup_{B_1}u\right)L^{( x)} {\left(u-\sup_{B_1}u\right)}(x)\\&\le&
\sup_{B_1}\Big(
2a_2\overline\eta^2\,(\partial_e^2 u)_+
+ 2a_1\,\tau \,{\eta}^2\,|\partial_e u|\Big)
+4a_0\,\sigma \,\|u\|_{L^\infty(B_1)}
\\ &\le&C\sup_{B_1}\Phi,\end{eqnarray*}
for some constant~$C$ depending only on~$n$, $\sigma$, and~$\tau$.
Hence, \eqref{MAXPART1} is satisfied with~$
\gamma_0:=
C\sup_{B_1}\Phi$, and thus~\eqref{MAXPART} yields that~$
\sup_{{B_1}}\varphi \le
\sup_{\R^n\setminus{B_1}}\varphi +C\gamma_0$.
As a consequence, recalling~\eqref{78AJJQoeAAiak0IIA},
we have that~$
\sup_{{B_1}}\varphi \le 4\sigma\,\|u\|_{L^\infty(\R^n)}^2
+C\gamma_0$.
This establishes~\eqref{9OA-CLAI}, as desired.

We stress that the constant~$C_\sharp$ in~\eqref{9OA-CLAI}
does not depend on~$\eta$ and $\overline\eta$, and hence
we can now take appropriate choices for these two functions.
Note first that, from~\eqref{9OA-CLAI},
\begin{equation*}\begin{split}
&\sup_{{B_1}} \Big( \overline\eta^2(\partial_e^2 u)_+^2+
\tau \,\eta^2|\partial_e u|^2\Big)
\\
&\qquad\le\,
\sup_{{B_1}}\varphi\\&\qquad
\le\, C_\sharp\left(C
\sup_{B_1} \Phi+\| u\|_{L^\infty(\R^n)}^2
\right)\\ &\qquad
\le\, C_\sharp\left(C
\Big( \sup_{B_1}\big( a_2\overline\eta^2 \,(\partial_e^2 u)_+
+ a_1{\eta}^2\,|\partial_e u|\big)
+ a_0 \| u\|_{L^\infty(B_1)}\Big)+ \| u\|_{L^\infty(\R^n)}^2
\right)
\\ &\qquad
\le\, C_\sharp\Bigg(\frac1{2C_\sharp}\sup_{B_1}\Big(\overline\eta^2 \,(\partial_e^2 u)_+^2 +
\tau {\eta}^2\,|\partial_e u|^2\Big)
\\&\qquad\qquad+C_\sharp C^2 a_2^2\sup_{B_1}\overline\eta^2
+C_\sharp C^2 a_1^2\sup_{B_1}\eta^2
+C a_0 \| u\|_{L^\infty(B_1)}+ \| u\|_{L^\infty(\R^n)}^2
\Bigg),
\end{split}\end{equation*}
where we have used a
Cauchy-Schwarz inequality in the last step,
and therefore
\begin{eqnarray*}&& \frac12\,
\sup_{{B_1}} \Big( \overline\eta^2(\partial_e^2 u)_+^2+\tau \,
\eta^2|\partial_e u|^2\Big)\\&&\qquad\le
C_\sharp^2\left(
C^2 a_2^2\sup_{B_1} \overline\eta^2
+C^2 a_1^2\sup_{B_1}\eta^2
+C a_0 \| u\|_{L^\infty(B_1)}+ \| u\|_{L^\infty(\R^n)}^2
\right).\end{eqnarray*}
We now make our two choices of cutoff
functions. First, choosing~$\overline\eta:=0$
and~$\eta$ with~$|\eta|\le1$ and~$\eta=1$
in~$B_{1/2}$, from the last inequality
we obtain~\eqref{LAPRIMAGENE}.
Next, with the same choice of~$\eta$, we now choose~$\overline\eta\in C^\infty_c(B_{1/2})$
satisfying~$\overline\eta=1$ in~$B_{1/4}$. In this way, we infer~\eqref{LASECOGENE}.
\end{proof}

\section{Operators having local extension}\label{PARTE1}

In this section, we prove the main results of
Subsections~\ref{sub:prima} and~\ref{SEct:EXTE},
namely
Theorem~\ref{MUCCI}, Theorem~\ref{THM:1}, and
Corollary~\ref{92OBS}.
To this end,
we will exploit the key inequality of Proposition~\ref{Berns-frac}
in a more general version, as given
in the forthcoming
Proposition~\ref{PRO:AZZ}. To state it,
we consider a direction~$e\in\R^n$, with~$|e|=1$,
and two cutoff functions~$\eta$ and~$\overline\eta$ such that
\begin{equation}\label{118}
\eta\in C^\infty_c (B_{1})
\text{ and } \overline{\eta}\in C^\infty_c (B_{1/2})
\text{ with }
{\mbox{$\eta(x)=1$ for all~$x\in B_{1/2}$.}}\end{equation}
Note that this setting covers the situation required in Theorem~\ref{sec:XT}.

Recall the comments after Definition~\ref{BENE}
for the precise meaning of the left-hand side of~\eqref{65GHA:98a124gH} below (when~$s=1$),
and note that the proposition is uniform in~$s$ (in fact, we include the cases~$s=0$
and~$s=1$).

\begin{proposition}\label{PRO:AZZ}
Let~$\eta$ and~$\overline\eta$
be as in~\eqref{118},
$s\in[0,1]$, and~$\kappa
\in\R$. Let~$u\in C^\infty(\R^n)\cap W^{2,\infty}(\R^n)$.
For every~$x\in\R^n$, let
$$ \varphi(x) :=\overline\eta^2(x)\,\big( \partial_e^2u(x)\big)^2_+ 
+ \tau\,\eta^2(x)\big(\partial_e u(x)\big)^2+
\sigma\,\big(u(x)-\kappa\big)^2.$$

Then, there exist positive constants~$\sigma_0$ and~$\tau_0$, depending only on~$n$,
$\|\eta\|_{C^2(\R^n)}$,
and~$\|\overline{{{\eta}}}\|_{C^2(\R^n)}$, such that
\begin{equation}\label{65GHA:98a124gH}
\begin{split}
& (-\Delta)^{s} \varphi \le
2\overline\eta^2\,(\partial_e^2 u)_+\,
(-\Delta)^s \partial_e^2 u+ 2\tau\,{\eta}^2\,\partial_e u\,(-\Delta)^s\partial_e u\\&\qquad\qquad\qquad
+2\sigma \,(u-\kappa)\,(-\Delta)^s {(u-\kappa)}\qquad\qquad\qquad
{\mbox{if~$\tau\ge \tau_0$ and~$\sigma\ge \sigma_0 \tau$,}}\end{split}\end{equation}
everywhere in all of~$\R^n$.
\end{proposition}

The proof of Proposition~\ref{PRO:AZZ} will be given after Corollary~\ref{COR:A:ZZ}
and will rely on the forthcoming auxiliary calculations.

\subsection{Computations in the extended space}\label{sec:uno}

Throughout this section,
we consider~$s\in(0,1)$ and~$a:=1-2s\in(-1,1)$.
Given~$u\in C^\infty(\R^n)\cap W^{2,\infty}(\R^n)$,
we take~$U$ to be the $s$-harmonic extension of~$u$
in~$\R^{n+1}_+:=
\R^n\times(0,+\infty)=\R^n\times\R_+$, i.e.,
the unique
bounded solution of the problem
\begin{equation}\label{EQ:EXT:U}
\left\{
\begin{matrix}
{\rm div}\, \big(y^a \nabla U(x,y)\big)=0 & {\mbox{ for all }}(x,y)\in\R^{n+1}_+,\\
U(x,0)=u(x) & {\mbox{ for all }}x\in\R^n.
\end{matrix}
\right.\end{equation}
Such solution can be found, for instance, by convolving~$u$ against
the fractional Poisson kernel; see~\cite{caffarelli-silvestre}.
As a result, since~$u$ is continuous,
we have that~$U$ is continuous
in~$\overline{\R^{n+1}_+}$.
It is also clear that~$U$ is smooth in~$\R^{n+1}_+$. Note that the solution~$U$
can also be obtained by direct minimization of
an energy functional; see Section~3
in~\cite{MR3165278}. 

The uniqueness of the bounded extension~$U$ follows from Lemma~\ref{0okmokm6}, which is a more
general result than uniqueness that we will need later in this section. See
Corollary~3.5 in~\cite{MR3165278} for an alternative proof of uniqueness.

For notational convenience, we also set 
\begin{equation}\label{defla}
L_a U :=- {\rm div}\, \big(y^a \nabla U\big).
\end{equation}
Of course, no confusion should arise with the
linearized operator~$L$
introduced in~\eqref{L:EQ:OP}. For further reference, we point out that
\begin{equation}\label{AUS:fg}
L_a (VW) = (L_a V)\,W+V\,(L_a W) - 2y^a \nabla V\cdot \nabla W.
\end{equation}
With a slight abuse of notation, we identify the direction~$e\in\R^n$
with the same
direction in~$\R^{n+1}$
when we consider the directional derivatives along~$e$. 
In this setting, we have the
following result.

\begin{lemma}\label{:2BIS}
Let~$\eta$ be as in~\eqref{118} and~$\kappa\in\R$.
For~$(x,y)\in\R^{n+1}_+$, let
$$\Psi_0(x,y):=
\big( U(x,y)-\kappa\big)^2$$
and
$$ \Psi_1(x,y):={{\eta}}^2(x)\,
\big( \partial_e U(x,y)\big)^2.$$

Then,
\begin{equation}
\label{LE:AUS:2} L_a \Psi_0 =- 2y^a \,|\nabla U|^2
\end{equation}
and 
\begin{equation} \label{87t58-0dst}
L_a \Psi_1 \le- y^a\,{{\eta}}^2\,\big| \nabla\partial_e U\big|^2
+C\chi_{B_1}\,y^a\big( \partial_e U\big)^2
\end{equation}
everywhere
in~$\R^{n+1}_+$,
for some constant~$C$
depending only on~$n$ and~$\|\eta\|_{C^2(\R^n)}$.
\end{lemma}

The statement and proof of Lemma~\ref{:2BIS}
concern a
general constant~$\kappa\in\R$. Later it will be convenient
to choose~$\kappa$ to be the supremum of the solution.

\begin{proof}[Proof of Lemma \ref{:2BIS}] By~\eqref{AUS:fg}
we have that~$ L_a \Psi_0=2(L_a U)\,(U-\kappa)-2y^a \,|\nabla U|^2$,
whence~\eqref{LE:AUS:2} plainly follows.

Now, to prove~\eqref{87t58-0dst},
since~$L_a U=0$, and thus~$L_a \partial_eU=0$,
we infer that
\begin{equation} \label{LE:AUS:2:ZZ}
L_a( \partial_e U)^2 = -2y^a \,|\nabla \partial_e U|^2.
\end{equation}
Furthermore, by~\eqref{AUS:fg},
\begin{equation}\label{HG:A1}
L_a {{\eta}}^2 = 2{{\eta}}\, (L_a 
{{\eta}} )- 2y^a |\nabla{{\eta}}|^2 =-
2y^a {{\eta}}\,\Delta_x{{\eta}}- 2y^a |\nabla_x{{\eta}}|^2.
\end{equation}
Now, we notice that~$\Psi_1={{\eta}}^2( \partial_e U)^2 $.
Accordingly, we
use~\eqref{AUS:fg}, \eqref{LE:AUS:2:ZZ}, and~\eqref{HG:A1}
to see that
\begin{eqnarray*}
L_a\Psi_1 &=&(
L_a {{\eta}}^2)\,( \partial_e U)^2 +{{\eta}}^2\,(L_a ( \partial_e U)^2  )-
2y^a \nabla_x {{\eta}}^2\cdot \nabla_x( \partial_e U)^2  \\
&=& -2y^a
({{\eta}}\,\Delta_x{{\eta}}
+|\nabla_x{{\eta}}|^2)\,\big( \partial_e U\big)^2
-2y^a \,{{\eta}}^2\,|\nabla \partial_e U|^2
\\&&\qquad- 8y^a \,{{\eta}}\, \partial_e U\,\nabla_x{{\eta}}
\cdot \nabla_x \partial_e U.
\end{eqnarray*}
Also, we observe that, by the Cauchy-Schwarz inequality,
$$ 8y^a\, \big| {{\eta}}\,\partial_e U\,\nabla_x{{\eta}}
\cdot \nabla_x \partial_e U\big|\le
y^a \, {{\eta}}^2\,|\nabla\partial_e U|^2+
16\, y^a\,|\nabla_x{{\eta}}|^2\, (\partial_e U)^2.$$
Consequently, we obtain that
$$ L_a\Psi_1 \le- y^a \,{{\eta}}^2\, |\nabla\partial_e U|^2
+2y^a\,(
|{{\eta}}\Delta_x{{\eta}}|
+7\,|\nabla_x{{\eta}}|^2)\,\big( \partial_e U\big)^2.$$
Hence, the desired inequality~\eqref{87t58-0dst} follows, 
since~${\eta}$ is supported in~$B_1$,
$|\nabla_x{{\eta}}|\le \|\eta\|_{C^1(\R)}$,
and~$|\Delta_x{{\eta}}|\le n\,\|\eta\|_{C^2(\R)}$.
\end{proof}

We now start estimating the operator~$L_a$
acting on the second derivatives of~$U$.
The following (nonoptimal)
inequality is all what we will need subsequently.

\begin{lemma}\label{LE:AUS:1:ZZ}
Let~$\overline\eta$ be as in~\eqref{118}.
For~$(x,y)\in\R^{n+1}_+$,
let $$\Psi_2(x,y):=\overline\eta^2 (x)\,\big( \partial^2_e U(x,y)\big)^2_+ .$$

Then, there exists a constant~$C$,
depending only on~$n$
and~$\|\overline{{{\eta}}}\|_{C^2(\R^n)}$, such that
\begin{equation}\label{DR:ZZ}
L_a\Psi_2\le
C\,\chi_{B_{1/2}}\,y^a \,(\partial^2_e U)^2
\qquad{\mbox{ everywhere in }}\R^{n+1}_+.
\end{equation}
\end{lemma}

Concerning claim~\eqref{DR:ZZ},
we point out that,
since~$U\in C^\infty(\R^{n+1}_+)$,
we have that~$\partial_e^2U\in C^\infty(\R^{n+1}_+)$
and accordingly
\begin{equation}\label{ksd-we}
\big( \partial^2_e U\big)^2_+\in W^{2,\infty}_{\rm loc}(\R^{n+1}_+).\end{equation}
This is sufficient to compute the left-hand side of~\eqref{DR:ZZ}
everywhere by writing~$L_a$
in nondivergence form and using
the incremental quotient limit definition in~\eqref{CAV}. As an alternative,
one could also interpret~\eqref{DR:ZZ} in  the weak sense,
since~\eqref{ksd-we} yields that~$\big( \partial^2_e U \big)^2_+\in H^1_{\rm loc}(\R^{n+1}_+)$.

\begin{proof}[Proof of Lemma~\ref{LE:AUS:1:ZZ}] Recalling~\eqref{AUS:fg},
we have that
\begin{equation}\label{HG:A0:ZZ}
L_a\Big(\overline\eta^2 \,\big( \partial^2_e U\big)^2\Big)=
L_a \overline\eta^2\,\big( \partial^2_e U\big)^2+
\overline\eta^2\,L_a \big( \partial^2_e U\big)^2-
8y^a \overline\eta\,\partial^2_e U\,
\nabla \overline\eta\cdot\nabla \partial^2_e U
\end{equation}
and, since~$L_a U=0$, 
\begin{equation}\label{HG:A1:11}
L_a\big((\partial^2_e U)^2\big) = 2\partial^2_e U
\, L_a\partial^2_e U
- 2y^a |\nabla \partial^2_e U|^2=- 2y^a |\nabla \partial^2_e U|^2.
\end{equation}
Moreover,
$ L_a {{\overline\eta^2}} 
=-
2y^a {{\overline\eta}}\,\Delta_x{{\overline\eta}}-
2y^a |\nabla_x{{\overline\eta}}|^2$.
By inserting this and~\eqref{HG:A1:11} into~\eqref{HG:A0:ZZ}, we conclude that
\begin{equation}\label{9:AH:ZZ}
\begin{split}
L_a \Big(\overline\eta^2 \,\big( \partial^2_e U\big)^2\Big) \,&=\, -
2y^a \overline\eta\,\Delta_x\overline\eta\,(\partial^2_e U)^2
- 2y^a |\nabla_x\overline\eta|^2\,(\partial^2_e U)^2
- 2y^a \overline\eta^2\,|\nabla \partial^2_e U|^2\\&\qquad\qquad
-8 y^a \overline\eta \,\partial_e^2 U\,
\nabla_x \overline\eta\cdot\nabla_x\partial^2_e U\,.\end{split}\end{equation}
Now we use the
Cauchy-Schwarz inequality, finding that
$$ -8 y^a \overline\eta \,\partial_e^2 U
\nabla_{x} \overline\eta\cdot\nabla_x \partial^2_e U \le
y^a \,\overline\eta^2\, |\nabla \partial^2_e U|^2
+16\, y^a\, |\nabla_x\overline\eta|^2 \,(\partial_e^2 U)^2.$$
Since~$\overline\eta$ is supported in~$B_{1/2}$,
$|\nabla_x\overline\eta|\le \|\overline{{{\eta}}}\|_{C^1(\R)}$,
and~$|\Delta_x\overline\eta|\le n\,\|\overline{{{\eta}}}\|_{C^2(\R)}$,
the latter estimate and~\eqref{9:AH:ZZ} give
\begin{equation}\label{CH964}
L_a \Big(\overline\eta^2 \,\big( \partial^2_e U\big)^2\Big)\le
C\,\chi_{B_{1/2}}\,y^a \,(\partial^2_e U)^2
- y^a \overline\eta^2\,|\nabla \partial^2_e U|^2\le
C\,\chi_{B_{1/2}}\, y^a \,(\partial^2_e U)^2\end{equation}
everywhere in~$\R^{n+1}_+$, for a suitable constant~$C$
depending only on~$n$
and~$\|\overline{{{\eta}}}\|_{C^2(\R^n)}$.

Now, we use this inequality to prove~\eqref{DR:ZZ}.
For this, let~$(x,y)\in\R^{n+1}_+$ and
we observe that if~$\Psi_2(x,y)>0$, then~$\Psi_2=\overline\eta^2 \,\big( \partial^2_e U\big)^2$
in a small neighborhood of~$(x,y)$ and thus~\eqref{DR:ZZ}
follows in this case directly from~\eqref{CH964}.

If instead~$(x,y)\in\R^{n+1}_+$ is such that~$\Psi_2(x,y)=0$,
the second incremental quotient
definition~\eqref{CAV} gives, since~$\Psi_2\ge0$, that~$
-\Delta\Psi_2(x,y)\le0$.
Also, since~$\Psi_2\in C^1_{\rm loc}(\R^{n+1}_+)$ by~\eqref{ksd-we},
the fact that~$\Psi_2\ge0=\Psi_2(x,y)$
leads to~$\nabla\Psi_2(x,y)=0$
and,
as a result,
\begin{eqnarray*}
L_a\Psi_2(x,y)=-y^a\Delta\Psi_2(x,y)-ay^{a-1}\partial_y\Psi_2(x,y)\le0.
\end{eqnarray*}
In particular, also
in this case~$L_a\Psi_2(x,y)\le
C\,\chi_{B_{1/2}}\, y^a \,(\partial^2_e U)^2$. The proof
of~\eqref{DR:ZZ} is thereby complete.
\end{proof}

By combining Lemmata~\ref{:2BIS} and~\ref{LE:AUS:1:ZZ} we obtain:

\begin{corollary}\label{COR:A:ZZ}
Let~$\eta$ and~$\overline\eta$
be as in~\eqref{118} and~$
\sigma$, $\tau$, $\kappa\in\R$.
For~$(x,y)\in\R^{n+1}_+$, let
\begin{equation}\label{DEF:FI}
\Phi(x,y):=\overline\eta^2 (x)\,\big(\partial^2_e U(x,y) \big)_+^2 +
\tau\, {{\eta}}^2(x)\big( \partial_e U(x,y)\big)^2 +
\sigma\, \big(U(x,y)-\kappa\big)^2.\end{equation}

Then, there exist
positive constants~$\sigma_0$ and~$\tau_0$, depending only
on~$n$, $\|\eta\|_{C^2(\R^n)}$,
and~$\|\overline{{{\eta}}}\|_{C^2(\R^n)}$, such that~$L_a \Phi\le0$
everywhere in~$\R^{n+1}_+$
if~$\tau\ge\tau_0$ and~$\sigma\ge \sigma_0 \tau$.
\end{corollary}

\begin{proof} In the notation of
Lemmata~\ref{:2BIS} and~\ref{LE:AUS:1:ZZ}
we have that~$
\Phi= \Psi_2+\tau \Psi_1+\sigma \Psi_0$, and consequently
\begin{equation*}
\begin{split}
&L_a\Phi\;\le\;
C\,\chi_{B_{1/2}}\, y^a \,(\partial^2_e U)^2
-\tau\,  y^a\,{{\eta}}^2\,\big| \nabla\partial_e U\big|^2\\ &\qquad\qquad\qquad
+C\,\tau\,\chi_{B_1}\, y^a\big( \partial_e U\big)^2
-2\sigma\, y^a \,|\nabla U|^2.\end{split}\end{equation*}
Now, we notice that~$ |\partial^2_e U|
\le |\nabla\partial_e U|$ and that, by \eqref{118}, $
\chi_{B_{1/2}}\le {{\eta}}^2$.
Therefore, if~$\tau$ is sufficiently large
as stated in the corollary, we deduce
that
\begin{equation*}
L_a\Phi\;\le\; C\tau\, y^a\big( \partial_e U\big)^2
-2\sigma\, y^a \,|\nabla U|^2.\end{equation*}
Thus, if~$\sigma$ is sufficiently large,
we conclude~$L_a\Phi\le0$,
which proves the desired result.\end{proof}

The previous computations allow us to prove
the main inequality of this section.

\begin{proof}[Proof of Proposition~\ref{PRO:AZZ}] We first
consider the case~$s=0$. We have that \label{0-SOTTPA}
\begin{eqnarray*}
&& \varphi -
2\overline\eta^2\,(\partial_e^2 u)_+^2-
2\tau\,{\eta}^2\,(\partial_e u)^2
-2\sigma \, (u-\kappa)^2\\
&&\qquad\qquad=
-\overline\eta^2\,( \partial_e^2u)^2_+ 
- \tau\, \eta^2 (\partial_e u )^2-
\sigma\, (u-\kappa)^2\le0,
\end{eqnarray*}
which establishes the desired result in this case.

Now, we focus on the case~$s\in(0,1)$. {F}rom this,
the case~$s=1$ can be obtained in the limit\footnote{It is interesting to point out that while
we can
obtain~\eqref{65GHA:98a124gH} with~$s=1$ by a limit argument as~$s\nearrow1$,
when~$s=0$ we needed to perform a direct ---though simple--- computation
and we could not argue by sending~$s\searrow0$. Indeed, the limit as~$s\searrow0$ of the fractional Laplacian
involves a term of the type~$s\int_{\R^n\setminus B_1}u(y)\,|y|^{-n-2s}\,dy$
which goes to zero if~$u$ is compactly supported or decreases
sufficiently fast at infinity, but not in general.
In particular, it is not always true that~$ \lim_{s\searrow0}(-\Delta)^su(x)=u(x)$,
since the left-hand side is invariant if one replaces~$u$ by~$u-\kappa$,
for~$\kappa\in\R$, while the right-hand side is not;
see Proposition~4.4 in~\cite{MR2944369}
for additional details.}
(also, one could proceed
by
direct computations, as in~\cite{CC}).
For~$s\in(0,1)$,
we let~$U$ be the $s$-harmonic extension of~$u$ in~$\R^{n+1}_+$,
and we recall that~$a= 1-2s$. By~\cite{caffarelli-silvestre}, up to a positive multiplicative constant depending
only on~$s$ ---that we do not write since it plays no role to establish~\eqref{65GHA:98a124gH}--- we have
\begin{equation}\label{A1.13p:ZZ}
-\lim_{y\searrow0} y^{a} \partial_y \partial_e U
= (-\Delta)^{s} \partial_e u
\end{equation}
and 
\begin{equation}\label{B1.13p:ZZ}
-\lim_{y\searrow0} y^{a} \partial_y \partial^2_{e} U=
(-\Delta)^{s} \partial_{e}^2 u.\end{equation}
Moreover, if~$\Phi^*$ is
the $s$-harmonic extension of~$\varphi$ in~$\R^{n+1}_+$,
we also have that
\begin{equation}\label{h67io:ZZ}
-\lim_{y\searrow0} y^a \partial_y \Phi^*
= (-\Delta)^{s} \varphi\end{equation}
(recall that~$\varphi$ is locally~$W^{2,\infty}$ and bounded in~$\R^n$).

The key point is to consider the function
$$ \Phi(x,y):=
\overline\eta^2(x)\,\big(\partial_e^2 U(x,y)\big)^2_+
+ \tau\,\eta^2(x)\big(\partial_e U(x,y)\big)^2+
\sigma\,\big(U(x,y)-\kappa\big)^2.$$
Now, by Corollary~\ref{COR:A:ZZ}, we know that~$
L_{a} \Phi\le0$ almost everywhere in~$\R^{n+1}_+$,
as long as~$\sigma$ and~$\tau$ are taken
as in the statement
of Proposition~\ref{PRO:AZZ}.
Thus, since~$L_{a}\Phi^*=0$,
\begin{equation*}
L_{a} (\Phi-\Phi^*)\le0\quad{\mbox{ everywhere in }}\R^{n+1}_+.\end{equation*}
Notice also that~$\Phi-\Phi^*$ 
vanishes in~$\R^n \times\{0\}$,
and hence the maximum principle in Lemma~\ref{0okmokm6}
gives that~$\Phi-\Phi^*\le0$ in~$\R^{n+1}_+$.
As a consequence, since this function vanishes in~$\R^n\times\{0\}$,
we find that~$\lim_{y\searrow0} y^{a}\, \partial_y (\Phi-\Phi^*)\le0$.
Therefore, by~\eqref{h67io:ZZ},
the definition of~$\Phi$, and recalling again that~$(\partial^2_e U)_+^2$ is a~$W^{2,\infty}_{\rm loc}$ function,
\begin{equation*}
\begin{split}
 (-\Delta)^{s} \varphi \;&\le\;-
\lim_{y\searrow0} y^{a} \partial_y \Phi\\
&=\;-\lim_{y\searrow0}\Big( 
2y^{a}\,\overline\eta^2\,(\partial_e^2 U)_+\,
\partial_y\partial_e^2 U
+ 2y^{a}\,\tau\, \eta^2\,\partial_e U\,\partial_y\partial_e U
\\&\qquad\qquad+2y^{a}\,\sigma\, (U-\kappa)\,\partial_y 
(U-\kappa)\Big).
\end{split}\end{equation*}

We conclude, since~$\partial_e U(x,0)=\partial_e u(x)$ and~$\partial_e^2U(x,0)=
\partial_e^2u(x)$, that
\begin{equation}\label{g68:BIS:ZZ}\begin{split}
(-\Delta)^{s} \varphi\; \le\;&-
\lim_{y\searrow0}\Big(
2y^{a}\,\overline\eta^2\,(\partial_e^2 u)_+\,
\partial_y \partial_e^2 U
+ 2y^{a}\,\tau\, {\eta}^2\,\partial_e u\,\partial_y \partial_e U
\\&\qquad+2y^{a}\,\sigma\, (u-\kappa)\,\partial_y (U-\kappa)\Big)
\\ =\;&2\overline\eta^2\,(\partial_e^2 u)_+
(-\Delta)^s \partial_e^2 u
+ 2\tau\, \eta^2\,\partial_e u\,(-\Delta)^s\partial_e u
\\&\qquad
+2\sigma\, (u-\kappa)\,(-\Delta)^s {(u-\kappa)},\end{split}\end{equation}
where~\eqref{A1.13p:ZZ} and~\eqref{B1.13p:ZZ}
were used in the last step.
\end{proof}

The following result is a simple, but
useful, improvement of Proposition~\ref{PRO:AZZ},
in which we obtain that~$\varphi$ is a suitable subsolution
with respect to the linearized operator~\eqref{L:EQ:OP}
(in our current setting, the linear operators~$
{\mathcal{L}}_1,\dots,{\mathcal{L}}_J$
in~\eqref{L:EQ:OP} boil down to the linear operators~$
{\mathcal{L}}_{\mu_{1}},\dots,{\mathcal{L}}_{\mu_J}$).

\begin{corollary}\label{CRT:AZZ}
Let~$\eta$ and~$\overline\eta$
be as in~\eqref{118}.
Let~$\kappa\in\R$ and~$u\in C^\infty(\R^n)\cap W^{2,\infty}(\R^n)$.
For every~$x\in\R^n$, let
\begin{equation}\label{9203-139pq} \varphi(x) :=\overline\eta^2(x)\,\big(
\partial_e^2u(x)\big)^2_+
+ \tau\, \eta^2(x)\big(\partial_e u(x)\big)^2+
\sigma\, \big(u(x)-\kappa\big)^2.\end{equation}

Then, there exist positive constants~$\sigma_0$ and~$\tau_0$, depending only on~$n$,
$\|\eta\|_{C^2(\R^n)}$,
and~$\|\overline{{{\eta}}}\|_{C^2(\R^n)}$, such that,
if~$\tau\ge \tau_0$ and~$\sigma\ge \sigma_0 \tau$, then
in all of~$\R^n$ we have
\begin{equation}\label{9203-139pq-BISB} L\varphi \le
2\overline\eta^2\,(\partial_e^2 u)_+
\,
L\,\partial_e^2 u 
+ 2\tau\, {\eta}^2\,\partial_e u\,
L\,\partial_e u+2\sigma\, (u-\kappa)\,L{(u-\kappa)}.\end{equation}
\end{corollary}

\begin{proof} 
We write~\eqref{65GHA:98a124gH}
for every~$s\in[0,1]$,
and we integrate with respect to the measure~$\mu_j$,
for every given~$j\in\{1,\dots,J\}$.
We find that
\begin{equation*}
{\mathcal{L}}_{\mu_j} \varphi\le
2\overline\eta^2\,(\partial_e^2 u)_+
\,{\mathcal{L}}_{\mu_j}(\partial_e^2 u)
+ 2\tau\, \eta^2\,\partial_e u\,{\mathcal{L}}_{\mu_j}
\partial_e u
+2\sigma\, (u-\kappa)\,{\mathcal{L}}_{\mu_j} {(u-\kappa)}
.\end{equation*}
Now, we
multiply the above expression by~$\alpha_j$, as defined
in~\eqref{defalphak}, using~\eqref{CONV:POSITIVA},
and then we sum the inequality over~$j\in\{1,\dots,J\}$. In this way,
recalling also the definition of~$L$ in~\eqref{L:EQ:OP}, we conclude the proof.
\end{proof}

We remark that Proposition~\ref{Berns-frac}
is the simplified version of Proposition~\ref{PRO:AZZ} 
which already leads to first derivative estimates. On the other hand,
Proposition~\ref{Berns-frac} is stated for a more general cutoff function than the one in
Proposition~\ref{PRO:AZZ}. The proof of Proposition~\ref{Berns-frac} follows the same lines
as that of Proposition~\ref{PRO:AZZ}, and only requires Lemma~\ref{:2BIS}.

Though not explicitly used in this article, we next state the simplest inequality for an auxiliary function
which leads to one-sided derivative estimates (at least for global solutions). For more details see
the comments before and after~\eqref{eqn-v-pos}, in Open problem~\ref{OP2BIS},
and before Theorem~\ref{sec:XT}. Its proof also follows the same lines as that
of Proposition~\ref{PRO:AZZ}.

\begin{proposition}\label{PROP-66glo}
Let~$s\in(0,1)$,
$v\in C^\infty(\R^n)\cap W^{1,\infty}(\R^n)$, $\eta\in C^\infty(\R^n)\cap W^{2,\infty}(\R^n)$,
$e\in \R^n$ with~$|e|=1$, and~$\sigma>0$.
Let
$$ \psi:=\eta^2 \big(
\partial_e v\big)^2_{+} +\sigma v^2.$$

Then, in all of~$\R^n$ we have
\begin{equation*}
(-\Delta)^s \psi\le 2\eta^2\,(\partial_e v)_{+}\,
(-\Delta)^s\partial_e v
+2\sigma v\,(-\Delta)^s v\quad{\mbox{ if }}\sigma\ge\sigma_0,
\end{equation*}
for some constant~$\sigma_0$ depending only on~$n$ and~$\|\eta\|_{C^2(\R^n)}$ ---and,
in particular, independent of~$s$.
\end{proposition}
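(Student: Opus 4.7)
The strategy is to mirror the proof of Proposition~\ref{PRO:AZZ}, exploiting the Caffarelli-Silvestre extension. Let $V$ be the $s$-harmonic extension of $v$ in $\R^{n+1}_+$ solving \eqref{EQ:EXT:U} with $v$ in place of $u$, and let $a := 1-2s$. Set
\[
\Psi(x,y) := \eta^2(x)\,\big(\partial_e V(x,y)\big)_+^2 + \sigma\,V(x,y)^2.
\]
The core of the argument is to prove that $L_a \Psi \le 0$ in $\R^{n+1}_+$ whenever $\sigma$ is taken sufficiently large in terms of $n$ and $\|\eta\|_{C^2(\R^n)}$.

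The bound on $L_a \Psi$ comes from combining two pieces, exactly parallel to Lemmata~\ref{:2BIS} and~\ref{LE:AUS:1:ZZ}, but applied to $V$ instead of $U$. First, by \eqref{LE:AUS:2}, $L_a(V^2) = -2 y^a |\nabla V|^2$. Second, for the positive part term, I would argue pointwise as in Lemma~\ref{LE:AUS:1:ZZ}: at any point where $(\partial_e V)_+ > 0$, one has $(\partial_e V)_+ = \partial_e V$ in a neighborhood, so the computation reduces to that of $L_a\big(\eta^2 (\partial_e V)^2\big)$ carried out in \eqref{87t58-0dst} (with the unscaled cutoff $\eta$ in place of $\eta_R$); at any point where $(\partial_e V)_+ = 0$, the function $(\partial_e V)_+^2 \in C^{1,1}_{\mathrm{loc}}$ attains its minimum, so its gradient vanishes there and a second incremental quotient argument gives $L_a\big(\eta^2(\partial_e V)_+^2\big) \le 0$ at that point. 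Putting the two together yields
\[
L_a \Psi \;\le\; -y^a \eta^2 |\nabla \partial_e V|^2 + C y^a (\partial_e V)^2 - 2\sigma y^a |\nabla V|^2
\]
for some $C$ depending only on $n$ and $\|\eta\|_{C^2(\R^n)}$. Since $(\partial_e V)^2 \le |\nabla V|^2$, dropping the negative gradient term shows $L_a \Psi \le 0$ as soon as $\sigma \ge \sigma_0 := C/2$.

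The conclusion then follows by the boundary comparison step used at the end of Proposition~\ref{PRO:AZZ}. Let $\Psi^*$ be the $s$-harmonic extension of $\psi$, so that $L_a \Psi^* = 0$ in $\R^{n+1}_+$ and $\Psi^* = \psi = \Psi$ on $\R^n \times \{0\}$. Then $L_a(\Psi - \Psi^*) \le 0$ in $\R^{n+1}_+$ and the trace of $\Psi - \Psi^*$ on $\{y=0\}$ vanishes, so Lemma~\ref{0okmokm6} gives $\Psi \le \Psi^*$ in $\R^{n+1}_+$. Equality on the boundary then forces
\[
\lim_{y\searrow 0} y^a\, \partial_y \Psi \;\le\; \lim_{y\searrow 0} y^a\, \partial_y \Psi^*,
\]
i.e.\ $-\lim_{y\searrow 0} y^a \partial_y \Psi^* \le -\lim_{y\searrow 0} y^a \partial_y \Psi$. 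Using the extension characterization \eqref{A1.13p:ZZ} for $v$ and $\partial_e v$ together with \eqref{h67io:ZZ} applied to $\psi$, and expanding
\[
\partial_y \Psi = 2\eta^2\, (\partial_e V)_+\, \partial_y \partial_e V + 2\sigma\, V\, \partial_y V,
\]
I obtain exactly $(-\Delta)^s \psi \le 2\eta^2 (\partial_e v)_+ (-\Delta)^s \partial_e v + 2\sigma v (-\Delta)^s v$, as claimed.

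The main technical obstacle is the reduced regularity of the auxiliary function due to the positive part, which prevents a direct application of the identity \eqref{AUS:fg} at points where $\partial_e V$ changes sign. This is overcome by the two-case pointwise argument already employed in Lemma~\ref{LE:AUS:1:ZZ}, which works here because $(\partial_e V)_+^2 \in C^{1,1}_{\mathrm{loc}}(\R^{n+1}_+)$ and vanishes together with its gradient on its zero set. Once $L_a \Psi \le 0$ is established, the passage to the fractional Laplacian is mechanical.
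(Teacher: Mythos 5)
Your proposal is correct and takes essentially the same approach as the paper, which notes only that the proof of Proposition~\ref{PROP-66glo} ``follows the same lines as that of Proposition~\ref{PRO:AZZ}.'' You have faithfully reconstructed those lines: computing $L_a$ of the extended auxiliary function via the two-case pointwise argument of Lemma~\ref{LE:AUS:1:ZZ} (with the minor caveat that the minimum-point argument should be applied to the full product $\eta^2(\partial_e V)_+^2$ rather than to $(\partial_e V)_+^2$ alone, though this is immediate), and then passing to the boundary via Lemma~\ref{0okmokm6} and the characterizations of $(-\Delta)^s$ through the extension.
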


\subsection{Proofs of Theorem~\ref{MUCCI}, Theorem~\ref{THM:1}, and
Corollary~\ref{92OBS}}\label{MUCCI:S}

We can now prove these results using Theorem~\ref{sec:XT}, via an appropriate choice
of the linear operators~$\{L^{(x)}\}_{x\in B_1}$.

However, let us first make a remark that connects Theorems~\ref{MUCCI} and \ref{THM:1}. 

\begin{remark}\label{affine-indefinite}{\rm 
Theorem~\ref{THM:1} can be easily extended by changing
the definition~\eqref{OP:L} of ${\mathcal{L}}_\mu$
to involve more general operators of the form
\begin{equation}\label{STxWi} \int_0^1
\left( -\sum_{i,j=1}^n M_{ij}(s)\, \partial_{x_i x_j}^2 u\right)^s\,d\mu(s),
\end{equation}
containing fractions of second order elliptic operators
with constant coefficients instead of fractions of the Laplacian. Here, for every~$s\in[0,1]$ we are given a matrix~$M(s)\in {\mathcal{A}}_{\Lambda^{-2},\lambda^{-2}}$, where the classes $ {\mathcal{A}}$ were defined in \eqref{ELLMA}.

In this way one includes here those equations of Subsection~\ref{sub:prima} built from a finite number $J$ of linear operators. In fact, since our estimates will be independent of $J$, it is possible to
deduce Theorem~\ref{MUCCI} in that subsection from this extension of Theorem~\ref{THM:1},
by an approximation and limiting argument.

The proofs would remain almost unchanged by performing, for every $s\in [0,1]$,
the change of variables $\overline x=Ax$ (and then consider the function $\overline u (\overline x)= u(A^{-1}\overline x)$), where $A=A(s)\in
{\mathcal{A}}_{\lambda,\Lambda}$ is such that $A^{-2}=M:=M(s )$ ---as done later in~\eqref{TRANSFO} within the proof of Theorem~\ref{MUCCI}. In this way, one can see that
$$
\left( -\sum_{i,j=1}^n M_{ij}(s)\, \partial_{x_i x_j}^2 u\right)^s (x) = 
c_{n,s}\, \det A(s) \int_{\R^n}\frac{u(x)-u(y)}{|A(s)\,(x-y)|^{n+2s}} dy,
$$
which coincides with the operator ${\mathcal{L}}_{A}$ previously considered in \eqref{TUTTIS}, up to a multiplicative constant. These fractional operators thus possess the same type of
extension properties as the fractional Laplacian.

The previous equality can be alternatively checked through Fourier symbols since
\begin{eqnarray*}&&
\left(\sum_{i,j=1}^n M_{ij}\xi_i\xi_j \right)^s=
\left(\sum_{i,j,k=1}^n (A^{-1})_{ik}(A^{-1})_{kj}\, \xi_i\xi_j \right)^s=|\overline{\xi}|^{2s}
=\frac{c_{n,s}}{2}\int_{\R^n} \frac{2-e^{i\overline{z}\cdot\overline{\xi}}-e^{-i\overline{z}\cdot\overline{\xi}}}{|\overline{z}|^{n+2s}}\,d\overline{z}\\
&&\qquad 
=\frac{c_{n,s}}{2}\int_{\R^n} \frac{2-e^{i(A^{-1}\overline{z})\cdot\xi}-e^{-i(A^{-1}\overline{z})\cdot\xi}}{|\overline{z}|^{n+2s}}\,d\overline{z}=
\frac{c_{n,s}\,\det A}{2}\,
\int_{\R^n} \frac{2-e^{iz\cdot\xi}-e^{-iz\cdot\xi}}{|Az|^{n+2s}}\,dz,
\end{eqnarray*}
where $\overline\xi:=A^{-1}\xi$.
}
\end{remark}

\begin{proof}[Proof of Theorem~\ref{MUCCI}]
Note that,
since~${\mathcal{A}}$
is compact and we assume the continuity hypothesis~\eqref{Gcon}, observing also that~${\mathcal{L}}_Au(x)$
is continuous with respect to~$A$ by the dominated convergence theorem,
we have that, given~$x\in B_1$,
${\mathcal{M}}_{{\mathcal{A}}} u(x)={\mathcal{L}}_{A_x}u(x)-g_{A_x}(x)$
for some~$A_x\in{\mathcal{A}}$.

We start verifying the hypotheses of Theorem~\ref{sec:XT}
for the family of operators~$L^{(x)}={\mathcal{L}}_{A_x}$.
We first check that
the maximum principle
with estimate, as considered in Definition~\ref{BENE},
is satisfied in this case. Indeed,
assume that~\eqref{MAXPART1} holds true. By~\eqref{CASO:K}
applied to the kernel~$K(z)=c_{n,s}|Az|^{-n-2s}$, we know that
there exists
a function~$\beta$ with~$|\beta|\le100$ in~$\R^n$
and~${\mathcal{L}}_A\beta\le-c$
everywhere
in~$B_1$
for every~$A\in{\mathcal{A}}$.
Here~$c>0$ depends only on~$n$, $\lambda$, and~$\Lambda$.

Define~$\widetilde\varphi:=\varphi+\gamma_0\beta/c$.
By~\eqref{MAXPART1}, we have that~$
\inf_{y\in B_1}\{ {\mathcal{L}}_{A_y}\widetilde\varphi(x)\}\le 0$
for every~$x\in B_1$.
{F}rom this (and again the fact that the previous infimum will be
equal to~${\mathcal{L}}_{A^x}\widetilde\varphi(x)$ for some~$A^x\in{\mathcal{A}}$ ---$A^x$
perhaps different from~$A_x$), one sees that~$\sup_{\R^n}\widetilde\varphi$
cannot be achieved in~$B_1$ (except when~$\widetilde\varphi$ is constant),
and therefore in any case
\begin{equation*}
\max_{\overline{B}_1}\widetilde\varphi\le \sup_{\R^n\setminus B_1}\varphi+\frac{100\,
\gamma_0}c.
\end{equation*}
Consequently, we find that
$$ \sup_{B_1}\varphi\le \sup_{\R^n\setminus B_1}\varphi+\frac{200\,
\gamma_0}c,$$
and hence the maximum principle with estimate, as considered in Definition~\ref{BENE},
is satisfied.

Now we check that~\eqref{L-EQAU-B1} holds true in the current situation
if we take~$\tau$ and~$\sigma$ appropriately.
Indeed, 
by the definition~\eqref{TUTTIS} of~$\mathcal{L}_A$, we know that
\begin{equation}\label{TRANSFO} {\mathcal{L}}_A u(x)=\frac1{\det A}(-\Delta)^s
u_A(Ax),\qquad{\mbox{
where~$u_A( \overline{x}):=u(A^{-1}\overline{x})$.}}\end{equation}
Also~$\partial_{e_A} u_A(\overline{x})=\partial_{e} u(A^{-1}\overline{x})$
and~$\partial_{e_A}^2 u_A(\overline{x})=\partial_{e}^2 u(A^{-1}\overline{x})$, with~$e_A:=Ae$.
That is, setting~$e_A':=e_A/|e_A|$, we have that~$ \partial_{e} u(A^{-1}\overline{x})=
|e_A|\,\partial_{e_A'} u_A(\overline{x})$ and~$
\partial_{e}^2 u(A^{-1}\overline{x})=|e_A|^2\,
\partial_{e_A'}^2 u_A(\overline{x})$.
Thus, we have that
\begin{equation}\label{TRANS-90sccjer}
(-\Delta)^s \partial_{e'_A} u_A(\overline{x})=
\frac{\det A}{|e_A|}\,{\mathcal{L}}_A\partial_{e} u(x)\quad{\mbox{
and }}\quad
(-\Delta)^s \partial_{e'_A}^2u_A(\overline{x})=\frac{\det A}{|e_A|^2}\,{\mathcal{L}}_A\partial_{e}^2 u(x).\end{equation}
At the same time, if we let~$\varphi$ be as in~\eqref{78AJJQoeAAiak0IIA}, $\sigma_A:=|e_A|^{-4}\sigma$,
$\tau_A:=|e_A|^{-2}\tau$, $\eta_A(\overline{x}):=\eta(A^{-1}\overline{x})$,
$\overline\eta_A(\overline{x}):=
\overline\eta(A^{-1}\overline{x})$,
and
$$ \overline\varphi_A(\overline{x}):=
\overline\eta^2_A(\overline{x})\,\big(\partial_{e'_A}^2u_A(\overline{x})\big)^2_+
+\tau_A \,\eta^2_A(\overline{x})
\big(\partial_{e'_A} u_A(\overline{x})\big)^2+\sigma_A\,
\left( u_A(\overline{x})-\sup_{A(B_1)} u_A\right)^2,$$
we find that, if~$x\in B_1$, and thus~$\overline{x}=Ax\in A(B_1)$,
\begin{align*}
{\mathcal{L}}_A\varphi(x)
=\frac{|e_A|^4}{\det A}\,(-\Delta)^s\overline\varphi_A(\overline{x}).
\end{align*}
As a result, 
in view of\footnote{Strictly speaking, we are applying here a small variation
of Proposition~\ref{PRO:AZZ},
since the condition~\eqref{118}
on the cutoff functions reads here~$
\eta_A$, $\overline{\eta}_A\in C^\infty_c (A(B_{1/2}))$ and~$\eta_A=1$
in~$A( B_{1/4})$. It is easy to verify that
this fact does not alter the conclusion~\eqref{0o9o08ddf9}.
Notice also that~$\tau_A$ and~$\sigma_A$ are comparable
to~$\tau$ and~$\sigma$, respectively, with constants depending
only on~$n$, $\lambda$, and~$\Lambda$.}
Proposition~\ref{PRO:AZZ},
if~$\tau$ and~$\sigma$ are sufficiently large, we conclude that,
for every~$x\in B_1$
(and thus~$\overline{x}=Ax\in A(B_1)$),
\begin{align}\label{0o9o08ddf9}
{\mathcal{L}}_A\varphi(x)\,&\le
\frac{|e_A|^4}{\det A}\,\Bigg\{
2\overline\eta^2_A(\overline{x})\,
\big(\partial_{e'_A}^2u_A(\overline{x})\big)_+\,
(-\Delta)^s \partial_{e'_A}^2u_A(\overline{x})
\\&\qquad+ 2\tau_A\,\eta^2_A(\overline{x})\,\partial_{e'_A} u_A
(\overline{x})\,(-\Delta)^s\partial_{e'_A} u_A(\overline{x})\\&\qquad
+2\sigma_A \,
\left( u_A(\overline{x})-\sup_{A(B_1)} u_A\right)
\,(-\Delta)^s \left( u_A-\sup_{A(B_1)} u_A\right)(\overline{x})
\Bigg\}.
\end{align}
{F}rom
this and~\eqref{TRANS-90sccjer}, we get
\begin{align*}
{\mathcal{L}}_A\varphi(x)\,&\le
\Bigg\{
2\overline\eta^2_A(\overline{x})\,
\big(\partial_{e}^2u(x)\big)_+\,
{\mathcal{L}}_A \partial_{e}^2u(x)
+  2\tau\,\eta^2_A(\overline{x})\,
\partial_{e} u(x)\,{\mathcal{L}}_A\,\partial_{e} u(x)\\&\qquad
+ 2\sigma \,
\left( u(x)-\sup_{B_1} u\right)
\,{\mathcal{L}}_A \left( u-\sup_{B_1} u\right)(x)
\Bigg\}
\end{align*}
for all~$x\in B_1$.
Hence, inequality~\eqref{L-EQAU-B1} holds true with~$
L^{(x)}={\mathcal{L}}_{A_x}$, after choosing, given~$x$, $A=A_x$.

With this, we are in the position of applying Theorem~\ref{sec:XT}. To this end, we estimate the quantities~$a_0$,
$a_1$, and~$a_2$ given in its statement. The arguments here are similar to those of Subsection~\ref{sub41s}.
First, by equation~\eqref{MUCCI:EQ},
\begin{equation}\label{3.37BIS}
{\mathcal{L}}_{A_x} u(x)=g_{A_x}(x) 
\end{equation}
for every~$x\in B_1$.
Also, using again equation~\eqref{MUCCI:EQ},
for every~$x$, $y\in B_1$ we have that
\begin{equation}\label{X560199339999:1}\begin{split}&
{\mathcal{L}}_{A_x}u(y)-g_{A_x}(y)\le
{\mathcal{M}}_{{\mathcal{A}}} u(y)=0={\mathcal{M}}_{{\mathcal{A}}} u(x) 
={\mathcal{L}}_{A_x}u(x)-g_{A_x}(x)
.\end{split}
\end{equation}
Therefore, if~$e\in\R^n$ and~$|e|=1$, given~$h\in(0, 1-|x|)$,
we see that
\begin{eqnarray*}
{\mathcal{L}}_{A_x} \left(\frac{u(x\pm he)-u(x)}{h}\right)=
\frac{{\mathcal{L}}_{A_x}u(x\pm he)-{\mathcal{L}}_{A_x}u(x)}{h}\le\frac{
g_{A_x}(x\pm he)-g_{A_x}(x)
}h,
\end{eqnarray*}
and, as a consequence,~$
\pm{\mathcal{L}}_{A_x} \partial_e u(x)\le\pm
\partial_e g_{A_x}(x)$
for almost every~$x\in B_1$.
Thanks to the possible sign choice in this inequality, 
and to the fact that~$g_A$ is a Lipschitz function, and thus pointwise differentiable almost everywhere,
we thereby deduce that
\begin{equation}\label{X560199339999:2}
{\mathcal{L}}_{A_x} \partial_e u(x)= 
\partial_e  g_{A_x} (x)
\end{equation}
for almost every~$x\in B_1$.
In addition, exploiting~\eqref{X560199339999:1} once again,
\begin{equation*} \begin{split}&
{\mathcal{L}}_{A_x} \left(\frac{u(x+ he)+u(x- he)-2u(x)}{h^2}\right)\\&\qquad=
\frac{{\mathcal{L}}_{A_x}u(x+ he)+{\mathcal{L}}_{A_x}u(x- he)-
2{\mathcal{L}}_{A_x}u(x)}{h^2}\\
&\qquad\le\frac{
g_{A_x}(x+he)+g_{A_x}(x-he)-2g_{A_x}(x)
}{h^2} 
.\end{split}\end{equation*}
Thus, if~$ g_{A_x}$ is semiconcave, and hence pointwise twice
differentiable almost everywhere, we have that,
for almost every~$x\in B_1$,
\begin{equation}\label{DESDF23} {\mathcal{L}}_{A_x}\partial_e^2 u (x)\le\partial_e^2 g_{A_x} (x) 
.\end{equation}

In the notation of Theorem~\ref{sec:XT}
(with~$L^{(x)}$
there corresponding to~${\mathcal{L}}_{A_x}$ here),
\eqref{3.37BIS}, \eqref{X560199339999:2}, and~\eqref{DESDF23}
lead to
\begin{equation}\label{IN939ik3me8834yg}
a_0=\sup_{x\in B_1}\big(g_{A_x}(x)\big)_-
\le \sup_{{x\in B_1}\atop{A\in{\mathcal{A}}}} \big(
 g_{A}(x)
\big)_-
=\| (g_A )_-\|_{L^\infty({\mathcal{A}}\times B_1)}
,\end{equation}
\begin{equation}\label{IN939ik3me8834yg2}
a_1=
\sup_{x\in B_1}\big|\partial_e g_{A_x}(x)\big|
\le\sup_{{x\in B_1}\atop{A\in{\mathcal{A}}}}\big|
\partial_e g_{A}(x)
\big|=
\|\partial_e g_{A}  \|_{L^\infty({\mathcal{A}}\times B_1)},
\end{equation}
and 
\begin{equation}\label{IN939ik3me8834yg3}
a_2\le\sup_{x\in B_1}\big(\partial_e^2 g_{A_x}(x)
\big)_+
\le\sup_{{x\in B_1}\atop{A\in{\mathcal{A}}}}\big(
\partial_e^2 g_{A}(x)\big)_+=
\| (\partial_e^2 g_{A} )_+ \|_{L^\infty({\mathcal{A}}\times B_1)}
\,.
\end{equation}

These considerations and Theorem~\ref{sec:XT} lead to
\begin{equation}\label{NBCshnr:1}
\sup_{B_{1/2}} |\partial_e u|
\le C\,\Big( \|\partial_e g_{A} \|_{L^\infty({\mathcal{A}}\times B_1)}
+\big(
\| ( g_{A} )_-\|_{L^\infty({\mathcal{A}}\times B_1)}
\,\| u\|_{L^\infty(B_1)}
\big)^{1/2}+\| u\|_{L^\infty(\R^n)}
\Big)\end{equation}
and
\begin{equation}\label{NBCshnr:2}\begin{split}
\sup_{B_{1/4}} \partial^2_e u\le\,& C\,
\Big(
\| (\partial_e^2 g_{A} )_+ \|_{L^\infty({\mathcal{A}}\times B_1)}
+\|\partial_e g_{A}\|_{L^\infty({\mathcal{A}}\times B_1)}\\&\qquad+\big(
\| ( g_{A})_-\|_{L^\infty({\mathcal{A}}\times B_1)}
\,\| u\|_{L^\infty(B_1)}
\big)^{1/2}+\| u\|_{L^\infty(\R^n)}\Big)
,\end{split}\end{equation}
for some constant~$C$ depending only on~$n$, $\lambda$, and $\Lambda$.
This gives~\eqref{9ixsjXVaBAM-1},
and also~\eqref{9ixsjXVaBAM-2}
with~$B_{1/2}$ replaced by~$B_{1/4}$. {F}rom this result,
it is easy to deduce the second derivative bound
as stated in Theorem~\ref{MUCCI} (that is, in~$B_{1/2}$)
by a covering argument. \end{proof}

\begin{proof}[Proof of Theorem~\ref{THM:1}]
We use again
Theorem~\ref{sec:XT}, but now
taking, for all~$x\in B_1$, $L^{(x)}:=L$ to be
the linearized operator
introduced in~\eqref{L:EQ:OP}. Let us check that
the hypotheses of Theorem~\ref{sec:XT} are fulfilled.
First of all, by
Proposition~\ref{ujMAX},
if~$\varphi\in C_{\rm b}(\R^n)\cap {W^{2,\infty}(B_1)}$ is nonnegative
and satisfies~$L\varphi\le\gamma_0$ in~$B_1$, for some~$\gamma_0\ge0$,
then~$\sup_{{B_1}}\varphi \le
\sup_{\R^n\setminus{B_1}}\varphi +C\,\gamma_0$,
with~$C$ 
depending only on $n$, $\vartheta_0$, and~$\Theta_0$.
This says that~$L$ satisfies the maximum principle in~$B_1$ with constant~$C$, according to Definition~\ref{BENE}.
Moreover, choosing~$
\kappa:=\sup_{B_1}u$
in Corollary~\ref{CRT:AZZ}, we deduce that~\eqref{L-EQAU-B1} holds for~$\sigma$ and~$\tau$
large enough depending only\footnote{Here we use that the proof
of Theorem~\ref{sec:XT} requires only two choices of pairs~$(\overline\eta,\eta)$
for cutoff functions.}
on~$n$.

Therefore, from
Theorem~\ref{sec:XT} we obtain
that
\begin{equation}\label{7hb2qujjtui501-1}
\sup_{B_{1/2}} |\partial_e u|\le 
C\,\Big(a_1+ \big(a_0\|u\|_{L^\infty(B_1)}\big)^{1/2}+
\|u\|_{L^\infty(\R^n)}
\Big)\end{equation}
and
\begin{equation}\label{7hb2qujjtui501-2}
\sup_{B_{1/4}} \partial^2_e u\le C\,\Big(
a_2+ a_1
+\big( a_0 \| u\|_{L^\infty(B_1)}\big)^{1/2}+\| u\|_{L^\infty(\R^n)}
\Big),\end{equation}
where~$C$ is a constant depending only on~$n$, $\theta_0$, and~$\Theta_0$.

The quantities~$a_0$, $a_1$, and~$a_2$ in the statement of Theorem~\ref{sec:XT} are given by
\begin{equation*}
a_0:=\sup_{x\in B_1} \Big(L \big(u-\sup_{B_1}u\big)( x)\Big)_- \, ,\quad
a_1:=\sup_{x\in B_1} \big|L\,\partial_e u( x)\big|,\quad
{\mbox{and}}\quad
a_2:=\sup_{x\in B_1}\big(
L\,\partial_e^2 u( x)\big)_+\,.
\end{equation*}

We now use
Lemma~\ref{SUBSOLU:EXTE}. First, 
taking into account that~$L$ could contain
the operator~$(-\Delta)^0={\rm Id}$,
we have that
\begin{eqnarray*}&&
\Big(L\big(u-\sup_{B_1}u\big)\Big)_-=\max\Big\{-Lu+L\sup_{B_1}u,0\Big\}\\&&\qquad\qquad
\le
|F(0)|+\Theta_0\sup_{j\in\{1,\dots,J\}}\|g_j\|_{L^\infty(B_1)}}+\| u\|_{L^\infty(B_1)
\end{eqnarray*}
everywhere
in~$B_1$.
Hence, we conclude that
$$ a_0\le\,|F(0)|+\Theta_0\sup_{j\in\{1,\dots,J\}}\|g_j\|_{L^\infty(B_1)}+ \| u\|_{L^\infty(B_1)}.$$
This and the first derivative bound~\eqref{7hb2qujjtui501-1} lead to (using again Lemma~\ref{SUBSOLU:EXTE})
\begin{equation}\label{7hb2qujjtui501-1BOS}
\sup_{B_{1/2}} |\partial_e u|\,\le\, 
C\,\Big(
\| u\|_{L^\infty(\R^n)}
+|F(0)|+\sup_{j\in\{1,\dots,J\}}\|g_j\|_{W^{1,\infty}(B_1)} 
\Big). \end{equation}
Similarly, in light of the second derivative bound~\eqref{7hb2qujjtui501-2},
we see that
\begin{equation}\label{7hb2qujjtui501-2BOS}\begin{split}&
\sup_{B_{1/4}} \partial^2_e u\,\le\, C\,\Big( \| u\|_{L^\infty(\R^n)}+|F(0)|
+\sup_{j\in\{1,\dots,J\}}\|g_j\|_{W^{2,\infty}(B_1)}
\Big).\end{split}\end{equation}

These observations complete
the proof of
Theorem~\ref{THM:1}, after
using a covering argument as at the end of the proof of
Theorem~\ref{MUCCI}.
\end{proof}

Corollary~\ref{92OBS} follows immediately from Theorem~\ref{THM:1}, as we show
next. Alternatively, the corollary could also be established
following the lines of the proof of Theorem~\ref{MUCCI} ---hence, using the~$\max$
structure of the operator instead of introducing the linearized operator~$L$
as in Theorem~\ref{THM:1}.

\begin{proof}[Proof of Corollary~\ref{92OBS}]
We are in the setting of Theorem~\ref{THM:1}, with~$J=2$,
$\mathcal{L}_{\mu_1}=(-\Delta)^s$, $g_1=f$,
$\mathcal{L}_{\mu_2}=(-\Delta)^0$ (the identity), $g_2=f+\phi$, and~$F$
being the~$\max$ operator ---which satisfies~\eqref{CONV:POSITIVA}-\eqref{CONV:F}, with $\theta_0=\Theta_0=1$. Thus, 
the corollary follows immediately from Theorem~\ref{THM:1}.
\end{proof}

\begin{remark}\label{Rk_on_postive_parts}
{\rm
In view of the previous proofs, the estimates of Theorem~\ref{MUCCI}, Theorem~\ref{THM:1}, and
Corollary~\ref{92OBS} can be stated in a more refined manner.
Such statements can be found in version~2 of the arXiv preprint of this work
\url{https://arxiv.org/pdf/2010.00376v2.pdf} 
where the estimates are more precise in several respects: some quantities on the functions $g_A$ (or $g_j$)
involve only their positive or negative parts; the supremums of $g_A$ (or $g_j$) and of its derivatives are considered separately; finally, the equations and estimates are considered in a ball of arbitrary radius $R$, which is particularly interesting in the case of operators of indefinite order (since, for them, the way in which the equation changes after scaling is not as simple as in the definite order case).
}
\end{remark}

\section{General integro-differential operators: proof of Theorem~\ref{PUCCI TYPE BIS}}\label{934itgfoeojtthewi34848488484BOS}

Here, we exploit Theorem~\ref{CONTO TRACCIA}
to establish Theorem~\ref{PUCCI TYPE BIS}. Because of the presence of an error term in the inequality of  Theorem~\ref{CONTO TRACCIA}, the gradient estimates initially obtained will have a remainder or error term. But the fact that the estimates hold at any scale will allow us to reabsorb the error term and get rid of it.

\begin{proof}[Proof of Theorem~\ref{PUCCI TYPE BIS}]
The proof follows closely the lines of that of Theorem~\ref{MUCCI}. The main differences
are that here we only perform first derivative estimates, whence the choice
of the test function must be modified accordingly, and 
that the gradient estimate initially presents a small error that needs to be reabsorbed via scaled estimates.

The sketch of the proof of Theorem~\ref{PUCCI TYPE BIS}, emphasizing the technical differences with respect to the proof of Theorem~\ref{MUCCI}, goes as follows.

We assume the equation to hold in the ball~$B_2$ instead of~$B_1$, which we can do up to scaling.
The notation~$A_x\in{\mathcal{A}}$ in the proof of Theorem~\ref{MUCCI}
must be replaced by~$B_x\in{\mathcal{B}}$, as well as
the notation~${\mathcal{L}}_A$
must be changed here into~${\mathcal{L}}_{K_B}$. 
Also, the continuity of~$g_A$ with respect to~$A$ given by~\eqref{Gcon}
is replaced here by the continuity of~$g_B$ with respect to~$B$, as given by~\eqref{Gcon-BIS}.
In addition, now~$u$ is a solution of equation~\eqref{1.23BIS} in $B_2$ instead of equation~\eqref{MUCCI:EQ}.

In this way, we find that, given~$x\in B_2$, there exists~$B_x\in{\mathcal{B}}$ such that
$$ 0=\sup_{B\in{\mathcal{B}}} \Big( {\mathcal{L}}_{K_B}u(x)-g_{B}(x)\Big)=
{\mathcal{L}}_{K_{B_x}}u(x)-g_{B_x}(x).$$

Next, prooceding exactly as in the proof of Theorem~\ref{MUCCI}, we have that
the maximum principle
with estimate for the family of operators~$L^{(x)}={\mathcal{L}}_{K_{B_x}}$, as considered in Definition~\ref{BENE},
is satisfied in this case.

We now proceed as in the proof of Theorem~\ref{sec:XT} (namely, Theorem~\ref{sec:XT}
in its generality leads to second derivative estimates, while here we focus on the same setting but restricted to
first derivatives).
To check the hypotheses of Theorem~\ref{sec:XT}
for the family of operators~$L^{(x)}={\mathcal{L}}_{K_{B_x}}$, one needs to establish the validity of
a suitable analogue of~\eqref{L-EQAU-B1}, but here with~$\overline\eta:=0$ and~$\tau:=1$, since
we deal only with first derivatives.
To this end,
instead of using Proposition~\ref{PRO:AZZ} as in the proof of Theorem~\ref{MUCCI},
we utilize here Theorem~\ref{CONTO TRACCIA}. But then, in doing so,
the right-hand side of the analogue inequality to~\eqref{L-EQAU-B1}
(which holds now in $B_2$ by Theorem~\ref{CONTO TRACCIA}) presents an additional reminder
$\e^2\,\|\partial_e u\|^2_{L^\infty(B_3)}$.
Consequently,
the first derivative estimate in~\eqref{NBCshnr:1} also
presents an additional reminder $\e\,\|\partial_e u\|_{L^\infty(B_3)}$.

More precisely, the estimate corresponding to~\eqref{NBCshnr:1} here reads
\begin{equation}\label{THERES}\begin{split}&
\sup_{B_{1}} |\partial_e u|
\le C_\e\,\Big( \|\partial_e g_{B} \|_{L^\infty({\mathcal{B}}\times B_2)}
+\big(
\| ( g_{B} )_-\|_{L^\infty({\mathcal{B}}\times B_2)}
\,\| u\|_{L^\infty(B_2)}
\big)^{1/2}+\| u\|_{L^\infty(\R^n)}
\Big)\\&\qquad\qquad\qquad\qquad+\e\,\|\partial_e u\|_{L^\infty(B_3)}.\end{split}\end{equation}
The form of the right-hand side of this inequality originates from estimating the quantities~$a_0$ and~$a_1$ in the statement of Theorem~\ref{sec:XT}. For this,
in the present framework the analogue of~\eqref{3.37BIS} allows us to bound~$a_0$ by~$\| ( g_{B} )_-\|_{L^\infty({\mathcal{B}}\times B_2)}$ and the analogue of~\eqref{X560199339999:2} provides an estimate on~$a_1$ of the form~$\|\partial_e g_{B} \|_{L^\infty({\mathcal{B}}\times B_2)}$, leading to the right-hand side of~\eqref{THERES}.

Now we assume the function $u$ to be a solution in $B_5$ and we scale~\eqref{THERES} in a ball~$B_\rho(x)$, for each~$x\in B_2$ and~$\rho\in(0,1)$.
This can be performed by applying the previous estimate to the function~$ u_\rho(y):=u(x+\rho y)$. Using that $\rho\in(0,1)$, one finds that
\begin{equation*}\begin{split}
\sup_{B_{\rho}(x)} \rho |\partial_e u|&
\le C_\e\,\Big( \rho^{1+2s}\|\partial_e g_{B} \|_{L^\infty({\mathcal{B}}\times B_4)}
+\big(\rho^{2s}
\| ( g_{B} )_-\|_{L^\infty({\mathcal{B}}\times B_4)}
\,\| u\|_{L^\infty(B_4)}
\big)^{1/2}+\| u\|_{L^\infty(\R^n)}
\Big)\\&\qquad\qquad\qquad\qquad+\e\rho\,\|\partial_e u\|_{L^\infty(B_{3\rho}(x))}\\
&\le C_\e\sigma+\e\rho\,\|\partial_e u\|_{L^\infty(B_{3\rho}(x))},\end{split}\end{equation*}
where
$$ \sigma:=\|\partial_e g_{B} \|_{L^\infty({\mathcal{B}}\times B_4)}
+\big(
\| ( g_{B} )_-\|_{L^\infty({\mathcal{B}}\times B_4)}
\,\| u\|_{L^\infty(B_4)}
\big)^{1/2}+\| u\|_{L^\infty(\R^n)}.$$

Finally, we combine this
scaled estimates with Lemma~\ref{SCALED} (used with~$m:=1$) 
to reabsorb the remainder in the left-hand side. In this way we obtain a gradient estimate in $B_{1/2}$ for solutions in $B_5$. Now, a standard covering and scaling argument yields the estimate as stated in Theorem~\ref{PUCCI TYPE BIS}.
\end{proof}

\begin{appendix}

\section{A maximum principle in~$\R^{n+1}_+$}\label{AP-maxple}

We give here an elementary proof of a maximum principle for the extension operator
in the whole halfspace.
The result may have appeared somewhere else, but we could not find a reference.

\begin{lemma}\label{0okmokm6}
Let~$V$ be bounded from above in~$\R^{n+1}_+$ and
satisfy~$L_a V\le0$ weakly\footnote{We could instead assume that~$V$ is locally~$W^{2,\infty}$
in~$\R^{n+1}_+$ and that~$L_aV\le0$ is satisfied at every point in~$\R^{n+1}_+$ in the nondivergence
sense of Remark~\ref{R35} and Lemma~\ref{LE:AUS:1:ZZ} (see also the comments after the lemma). The proof of
Lemma~\ref{0okmokm6} in this setting is the same as the one that we give below. Simply notice that the
maximum principle in the half ball ${\mathcal{B}}_{4R}^+$ also holds for this notion of
subsolutions.}
in~$\R^{n+1}_+$. Assume that~$V(x,0)\le0$
for~$x\in\R^n$ (in the trace sense).

Then, $V\le0$ in~$\R^{n+1}_+$.
\end{lemma}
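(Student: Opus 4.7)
The plan is a Phragm\'en--Lindel\"of type argument that compares $V$ with the $L_a$-harmonic measure of the spherical portion of expanding upper half-balls. Set $M := \sup_{\R^{n+1}_+} V$, which is finite by hypothesis; since $V \le M$ trivially, it suffices to prove $M \le 0$. For each $R > 0$, denote the upper half-ball ${\mathcal{B}}_R^+ := \{(x,y) \in \R^{n+1} : |x|^2 + y^2 < R^2,\ y > 0\}$, its spherical portion $\Sigma_R := \partial B_R \cap \{y > 0\}$, and its flat portion $F_R := \overline{B_R} \cap \{y = 0\}$; let $\phi_R$ be the $L_a$-harmonic function on ${\mathcal{B}}_R^+$ with $\phi_R = 1$ on $\Sigma_R$ and $\phi_R = 0$ on $F_R$. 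Existence, uniqueness, and the bounds $0 \le \phi_R \le 1$ in ${\mathcal{B}}_R^+$ are standard consequences of the variational theory for degenerate elliptic operators with the Muckenhoupt $A_2$-weight $y^a$, valid since $a \in (-1, 1)$.

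First, a comparison in the bounded half-ball yields $V \le M\phi_R$ in ${\mathcal{B}}_R^+$: indeed $V - M\phi_R$ is a weak $L_a$-subsolution in ${\mathcal{B}}_R^+$ whose trace on $\partial {\mathcal{B}}_R^+$ is nonpositive (on $\Sigma_R$ one has $V \le M = M\phi_R$; on $F_R$ one has $V(\cdot, 0) \le 0 = M\phi_R$), so the weak maximum principle in this bounded domain applies. Second, one shows $\phi_R(x_0, y_0) \to 0$ as $R \to +\infty$ for each fixed $(x_0, y_0) \in \R^{n+1}_+$. A direct calculation gives the scaling identity $L_a \tilde u(x, y) = \lambda^{2-a} L_a u(\lambda x, \lambda y)$ whenever $\tilde u(x, y) := u(\lambda x, \lambda y)$, so isotropic dilations preserve $L_a$-harmonicity; uniqueness of the Dirichlet problem then yields $\phi_R(x, y) = \phi_1(x/R, y/R)$. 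As $R \to +\infty$, the argument $(x_0/R, y_0/R)$ converges to the origin, which lies in the interior of the flat face $F_1$ on which $\phi_1$ vanishes. Boundary continuity of $\phi_1$ at $(0, 0)$ then gives $\phi_R(x_0, y_0) \to 0$, and passing to the limit in $V(x_0, y_0) \le M\phi_R(x_0, y_0)$ yields $V(x_0, y_0) \le 0$; since $(x_0, y_0) \in \R^{n+1}_+$ was arbitrary, this proves the lemma.

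The main technical point is the boundary continuity of $\phi_1$ at the flat-face point $(0, 0)$: this is a classical boundary regularity statement for $L_a$-harmonic functions, provided, e.g., by the theory of Fabes--Kenig--Serapioni for degenerate elliptic equations with Muckenhoupt $A_2$-weights. An elementary direct proof would construct, for each small $\epsilon > 0$, an explicit supersolution $\Psi$ of $L_a$ in some half-ball ${\mathcal{B}}_\rho^+$ (with $\rho$ depending on $\epsilon$) that dominates $\phi_1$ on $\partial {\mathcal{B}}_\rho^+$ and satisfies $\Psi(0, 0) \le \epsilon$; the comparison principle then gives $\phi_1 \le \Psi$ in ${\mathcal{B}}_\rho^+$, hence $\phi_1(0, 0) \le \epsilon$. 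A natural ansatz combines the $L_a$-harmonic function $(x, y) \mapsto y^{1-a}$ (whose harmonicity is a direct check, since $y^a \cdot (1-a) y^{-a} = 1 - a$ is independent of $y$) with a suitable radial correction; producing an explicit $\Psi$ with value tending to $0$ at $(0, 0)$ is the delicate part, as radial profiles $r^\beta$ give supersolutions only for $\beta \in [-(n+a-1), 0]$, which do not decay at the origin, so a more elaborate, non-radial construction is needed.
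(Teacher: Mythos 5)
Your proof is correct and follows essentially the same route as the paper's: dominate $V$ in ${\mathcal{B}}_R^+$ by a dilated $L_a$-harmonic barrier that vanishes on the flat face near the origin, let $R\to\infty$ using the scaling identity $\phi_R(\cdot)=\phi_1(\cdot/R)$, and conclude via continuity of the barrier at $(0,0)$, which is obtained from Fabes--Kenig--Serapioni interior regularity after odd reflection across $\{y=0\}$ (where the barrier vanishes). The one point worth flagging is that the paper prescribes a \emph{smooth} boundary datum $\overline W$ (vanishing on ${\mathcal{B}}_2$, equal to $1$ outside ${\mathcal{B}}_3$), which makes the variational existence of the barrier in the weighted trace space immediate, whereas your datum $\mathbf{1}_{\Sigma_1}$ jumps along the equator $\partial B_1\times\{0\}$ and is not obviously an admissible trace for the weighted Dirichlet energy; this is a minor technicality, repairable either by smoothing the cutoff exactly as the paper does or by a Perron-type construction, and it does not affect the substance of the argument.
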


\begin{proof} Replacing~$V$ by~$V/\big(1+\|V^+\|_{L^\infty(\R^{n+1}_+)}\big)$, we can suppose that~$
V\le1$ in~$\R^{n+1}_+$.
For~$R>0$, we define~$
{\mathcal{B}}_R:=\{(x,y)\in\R^{n+1}
{\mbox{ s.t. }}|(x,y)|<R\}$ and~$
{\mathcal{B}}_R^+:={\mathcal{B}}_R\cap\R^{n+1}_+$.

Let~$\overline{W}\in C^\infty(\R^{n+1})$ be nonnegative and
such that~$\overline{W}=0$ in~${\mathcal{B}}_{2}$ and~$\overline{W}=1$
in~$\R^{n+1}\setminus {\mathcal{B}}_{3}$.
We define~$W:\R^{n+1}_+\to\R$ to be the minimizer of the extended
Dirichlet energy$$
\int_{{\mathcal{B}}_4^+} y^a|\nabla U(x,y)|^2\,dx\,dy$$
among the functions~$U$ having finite energy and such
that~$U=\overline{W}$ along~$\partial{\mathcal{B}}_4^+$.
We point out that this is a well posed problem in the sense
of Sobolev spaces with Muckenhoupt  weights
(which indeed possess a suitable notion of trace);
see~\cite{MR643158} and Theorem~3.2 of~\cite{MR3165278}.

Now, for every~$R>0$, we let~$W_R(x,y):=W\left(\frac{x}R,\frac{y}R\right)$. We note that~$W_R(x,y)=\overline{W}\left(\frac{x}R,\frac{y}R\right)=1\ge V(x,y)$
for~$(x,y)\in\partial {\mathcal{B}}_{4R}\cap \R^{n+1}_+$.
Furthermore, $W_R(x,0)=W\left(\frac{x}R,0\right)
=\overline{W}\left(\frac{x}R,0\right)\ge0\ge V(x,0)$ for~$x\in B_{4R}$.
As a consequence, by the maximum
principle (see~\cite{MR643158}) we infer
that
\begin{equation}\label{7Co7}
V(x,y)\le W_R(x,y)=W\left(\frac{x}R,\frac{y}R\right)\quad{\mbox{
for }} (x,y)\in{\mathcal{B}}_{4R}^+.\end{equation}

We also define~$W_{\!o}:{\mathcal{B}}_2\to\R$
to be the odd reflection of~$W$ in the variable~$y$. Thus,
we have that~$-{\rm div}(|y|^a\nabla W_{\!o})=0$ weakly
in~${\mathcal{B}}_2$.
Hence, we can apply the results in~\cite{MR643158}
(or Theorem~3.3 of~\cite{MR3165278})
and deduce that~$W_{\!o}$ is 
H\"older continuous, and thus so is~$W$ up to the boundary~$B_{2R}\times\{0\}$.
{F}rom this and~\eqref{7Co7}
the desired result
follows by sending~$R\to+\infty$.
\end{proof}

\section{Scaled inequalities}\label{APPA}

We recall a classical result about scaled inequalities which we have
used to prove Theorem~\ref{PUCCI TYPE BIS}.

\begin{lemma}\label{SCALED}
Let $m\ge0$ be an integer and~$u\in C^m(B_5)$, with~$B_5\subset\R^n$. 
Let~$\sigma_0\ge0$, and assume that for every~$\e>0$ there exists a constant~$C_\e\ge0$
such that
\begin{equation}\label{0380392410}
\sum_{k=0}^m \rho^k \| D^k u\|_{L^\infty(B_\rho(x))}
\le C_\e \,\sigma_0+\e\sum_{k=0}^m
\rho^k\| D^k u\|_{L^\infty(B_{3\rho}(x))}
\end{equation}
for every~$x\in B_2$ and every~$\rho\in(0,1)$.

Then, there exists a constant~$C$, depending only\footnote{It is important to
notice that, in the statement of Lemma~\ref{SCALED},
the constant~$C$ does not depend only
on~$m$ ---but on $m$ and on the value of the
function~$C_\varepsilon=:C(\varepsilon)$
at~$\epsilon:=\big(2(m+1)4^{m+1}\big)^{-1}$.
When we apply Lemma~\ref{SCALED}
to prove Theorem~\ref{PUCCI TYPE BIS}, this observation is crucial
in order to obtain the correct dependencies of the structural constants
in Theorem~\ref{PUCCI TYPE BIS} (instead of obtaining a universal constant).
}on~$m$
and on the value of the constant~$C_\epsilon$ 
with~$\epsilon:=\big(2(m+1)4^{m+1}
\big)^{-1}$, such that
\begin{equation}\label{9eudf3331144}
\sum_{k=0}^m \| D^k u\|_{L^\infty(B_{1/2})}\le C \sigma_0.
\end{equation}
\end{lemma}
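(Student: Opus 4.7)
The plan is a standard iteration on nested balls, engineered so that the contraction constant matches the prescribed $\epsilon_0$. First I will fix $\epsilon_0 := 1/(2(m+1)4^{m+1})$ once and for all, together with the corresponding constant $C_{\epsilon_0}$ provided by hypothesis~\eqref{0380392410}. For $j=0,1,2,\dots$, I set
\[
r_j := 1 - 2^{-j-1}, \qquad \rho_j := \frac{r_{j+1} - r_j}{3} = \frac{2^{-j-2}}{3},
\]
so that $r_0 = 1/2$, $r_j \nearrow 1$, $\rho_j \le 1/12 < 1$, and $\rho_{j+1} = \rho_j / 2$. The quantity to iterate is the combined weighted norm
\[
E_j := \sum_{k=0}^m \rho_j^k \,\|D^k u\|_{L^\infty(B_{r_j})}.
\]

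The heart of the argument will be to relate $E_j$ to $E_{j+1}$. For each $j \ge 0$ and each $k \in \{0, \ldots, m\}$, by continuity of $D^k u$ on the compact set $\overline{B}_{r_j} \subset B_5$ one can pick $y_{k,j} \in \overline{B}_{r_j}$ attaining $|D^k u(y_{k,j})| = \|D^k u\|_{L^\infty(B_{r_j})}$. Since $y_{k,j} \in B_1 \subset B_2$ and $\rho_j \in (0, 1)$, hypothesis~\eqref{0380392410} applied at $(y_{k,j}, \rho_j)$ with $\epsilon = \epsilon_0$ yields, after bounding its left-hand side from below by the single $k$-th term and using the inclusion $B_{3\rho_j}(y_{k,j}) \subset B_{r_{j+1}}$ (which holds because $|y_{k,j}| + 3\rho_j \le r_j + 3\rho_j = r_{j+1}$),
\[
\rho_j^k \,\|D^k u\|_{L^\infty(B_{r_j})} \le C_{\epsilon_0}\, \sigma_0 + \epsilon_0 \sum_{k'=0}^m \rho_j^{k'} \,\|D^{k'} u\|_{L^\infty(B_{r_{j+1}})}.
\]
Summing over $k = 0, \ldots, m$ and then rewriting $\rho_j^{k'} = 2^{k'} \rho_{j+1}^{k'} \le 2^m \rho_{j+1}^{k'}$ produces the recursion
\[
E_j \le (m+1)\, C_{\epsilon_0}\, \sigma_0 + q\, E_{j+1}, \qquad q := (m+1)\, \epsilon_0\, 2^m = \frac{1}{8 \cdot 2^m} < 1,
\]
where the specific value of $\epsilon_0$ is calibrated precisely so that $q < 1$.

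To finish, iterating the recursion $N$ times gives $E_0 \le (m+1)\, C_{\epsilon_0}\, \sigma_0 \sum_{i=0}^{N} q^i + q^{N+1} E_{N+1}$. The only use of the $C^m$ assumption is to observe that $E_j$ stays uniformly bounded in $j$: since $\rho_j \le 1$ and $u \in C^m(B_5)$, we have $E_j \le \sum_k \|D^k u\|_{L^\infty(\overline{B}_1)} < \infty$, so $q^{N+1} E_{N+1} \to 0$ as $N \to \infty$. Hence $E_0 \le 2(m+1)\, C_{\epsilon_0}\, \sigma_0$, and since $\rho_0 = 1/12$ gives $E_0 \ge 12^{-m} \sum_k \|D^k u\|_{L^\infty(B_{1/2})}$, inequality~\eqref{9eudf3331144} follows with $C := 2(m+1)\, 12^m\, C_{\epsilon_0}$. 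No step is a serious obstacle; the only planning choice worth emphasizing is matching the geometric scale factor $2^m$ produced by the ratio $\rho_j/\rho_{j+1}$ against the explicit $\epsilon_0$ prescribed in the statement, which is exactly the reason for taking $\epsilon_0 = 1/(2(m+1)4^{m+1})$.
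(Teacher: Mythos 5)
Your proof is correct, but it takes a genuinely different route from the paper's. The paper uses a single maximum-point argument: it introduces the weighted quantity $S(x):=\sum_{k=0}^m(1-|x|)^{k+1}|D^ku(x)|$, which vanishes on $\partial B_1$, and hence attains its maximum $M$ at an interior point $x_0$; setting $\rho_0:=(1-|x_0|)/4$, one bounds $\sum_k\rho_0^k\|D^ku\|_{L^\infty(B_{3\rho_0}(x_0))}\le (m+1)M/\rho_0$ (because $1-|x|\ge\rho_0$ on $B_{3\rho_0}(x_0)$) and $\rho_0\sum_k\rho_0^k\|D^ku\|_{L^\infty(B_{\rho_0}(x_0))}\ge M/4^{m+1}$, and then applies the hypothesis once at $(x_0,\rho_0)$ to absorb and conclude $M\le C\sigma_0$ in a single stroke. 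Your approach instead iterates over a dyadic sequence of nested balls $B_{r_j}\nearrow B_1$ with radii $\rho_j=\rho_0/2^j$, extracts the single term $\rho_j^k\|D^ku\|_{L^\infty(B_{r_j})}$ from the left side of the hypothesis applied at the point where the sup is attained, converts the scale on the right via $\rho_j^{k'}\le 2^m\rho_{j+1}^{k'}$, and runs a geometric-series absorption using the uniform bound $E_j\le\sum_k\|D^ku\|_{L^\infty(\overline{B}_1)}$ (from $u\in C^m(B_5)$) to kill the tail. Both are standard absorption techniques: the paper's single maximum-point argument is more compact and automatically selects the correct scale $\rho_0$; your iteration is more mechanical but requires an explicit a priori bound on $E_j$ and keeps track of the dyadic geometry. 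As you correctly observe, your recursion only needs $(m+1)\epsilon_0 2^m<1$, so the prescribed $\epsilon_0=\bigl(2(m+1)4^{m+1}\bigr)^{-1}$ works with ample margin; that precise value is the one forced by the paper's choice of the weight $(1-|x|)^{k+1}$ and the factor $1/4$ in $\rho_0$, and your argument shows the result holds for any sufficiently small $\epsilon_0$. The final constant in both cases depends on $m$ and on $C_{\epsilon_0}$, as claimed.
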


\begin{proof}
Let
\begin{eqnarray*}&&
S(x):=\sum_{k=0}^m (1-|x|)^{k+1}\,|D^ku(x)|\qquad\quad
{\mbox{ and }}\qquad \quad M:=\max_{x\in\overline{B}_1} S(x).
\end{eqnarray*}
Since~$S$ vanishes along~$\partial B_1$, the maximum of~$S$
in~$\overline{B}_1$ is attained at an interior point: namely, there exists~$x_0$
with~$|x_0|<1$ such that~$S(x_0)=M$.

We set
$$ \rho_0:=\frac{1-|x_0|}{4}\in(0,1).$$
Then, we have that~$B_{3\rho_0}(x_0)\subset B_1$,
and, for all~$x\in B_{3\rho_0}(x_0)$, it holds that~$1-|x|\ge1-|x_0|-3\rho_0=\rho_0$.
As a consequence,
\begin{eqnarray*}&& \sum_{k=0}^m \rho_0^k\| D^k u\|_{L^\infty(B_{3\rho_0}(x_0))}\leq
\sum_{k=0}^m \rho_0^k \sup_{x\in B_{3\rho_0}(x_0)}
\frac{S(x)}{(1-|x|)^{k+1}}\\&&\qquad\qquad
\le \rho_0^{-1}\sum_{k=0}^m \sup_{x\in B_{3\rho_0}(x_0)} S(x)
\le  \rho_0^{-1} (m+1)M.\end{eqnarray*}
{F}rom this and~\eqref{0380392410}, we infer that
\begin{eqnarray*}
&&
C_\e \,\sigma_0+\e (m+1)M\ge\rho_0
C_\e \,\sigma_0+\rho_0\e\sum_{k=0}^m
\rho_0^k\| D^k u\|_{L^\infty(B_{3\rho_0}(x_0))}
\ge\rho_0
\sum_{k=0}^m \rho_0^k \| D^k u\|_{L^\infty(B_{\rho_0}(x_0))}
\\ &&\quad\ge
\sum_{k=0}^m 
\frac{(1-|x_0|)^{k+1}}{4^{k+1}}\, | D^k u(x_0)|
\ge
\sum_{k=0}^m 
\frac{(1-|x_0|)^{k+1}}{4^{m+1}}\, | D^k u(x_0)|
=\frac{S(x_0)}{4^{m+1}}=\frac{M}{4^{m+1}} .\end{eqnarray*}

Therefore, taking~$\e:=\frac{1}{2(m+1)\,4^{m+1}}$, we 
conclude that~$S\le M\le C\sigma_0$ in~$B_1$ for some constant~$C$ depending only on~$m$
and on the constant~$C_\epsilon$ 
with~$\epsilon:=\big(2(m+1)4^{m+1}
\big)^{-1}$.
{F}rom this, the bound in~\eqref{9eudf3331144}, in half the ball, follows immediately.
\end{proof}

\section{A variant of Proposition~\ref{SUPERT-Intro}}\label{RGiwq04}

We state and prove here a convenient modification of Proposition~\ref{SUPERT-Intro} (in which~$\partial_eu$
is replaced by~$\nabla u$)
that is used in the proof of Lemma~\ref{849027674843hf38}.
\begin{proposition}\label{NUOVA:P}
Let~$K$ satisfy~\eqref{EVEN}
and~\eqref{KLIM}, and let ${\mathcal{L}}_K$ be defined by~\eqref{OP:L:BIS}. Given a function $u\in C^\infty(\R^n)\cap W^{1,\infty}(\R^n)$,
$\eta\in C^\infty(\R^n)\cap L^{\infty}(\R^n)$, and~$\CALERRE\in\R$, consider
\begin{equation*} \varphi:=\eta^2 |\nabla u|^2 +\sigma u^2.\end{equation*}

Then, the inequality
\begin{equation*}
{\mathcal{L}}_K \varphi\le 2\eta^2\,\nabla u\cdot{\mathcal{L}}_K \nabla u+2\sigma u\,{\mathcal{L}}_K u+\CALERRE
\end{equation*}
holds at a point~$x\in\R^n$ if and only if
\begin{equation*}
\begin{split}&
2\int_{\R^n}\eta(x)\,\big( \eta(x)-\eta(y)\big)\,\nabla u(x)\cdot\nabla u(y)\,K(x-y)\,dy
\\ &\qquad\le\,\int_{\R^n}\big|\eta(x)\,\nabla u(x)-\eta(y)\,\nabla u(y)\big|^2\,K(x-y)\,dy
\\&\qquad\qquad+\sigma\int_{\R^n}\big|u(x)- u(y)\big|^2\,K(x-y)\,dy
+\CALERRE.\end{split}\end{equation*}
\end{proposition}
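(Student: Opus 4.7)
The statement is the vector-valued analogue of Proposition~\ref{SUPERT-Intro}, obtained by replacing the scalar $\partial_e u$ with the gradient $\nabla u$ and scalar products with Euclidean dot products. Accordingly, my plan is to retrace, essentially line by line, the computation carried out in the proof of Propositions~\ref{SUPERT-Intro} and~\ref{SUPERT}, replacing the algebraic identity
$-a^2-b^2+2ab=-(a-b)^2$ (for scalars) by its vector polarization
$-|a|^2-|b|^2+2a\cdot b=-|a-b|^2$, applied with $a=\eta(x)\nabla u(x)$ and $b=\eta(y)\nabla u(y)$.

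Concretely, I would compute the quantity
\begin{equation*}
\begin{split}
&\mathcal{L}_K\bigl(\eta^2|\nabla u|^2+\sigma u^2\bigr)(x)
-2\eta^2(x)\,\nabla u(x)\cdot\mathcal{L}_K\nabla u(x)
-2\sigma u(x)\,\mathcal{L}_K u(x)\\
&\qquad\qquad+\sigma\int_{\R^n}\bigl|u(x)-u(y)\bigr|^2K(x-y)\,dy
\end{split}
\end{equation*}
by writing out each term via the definition of $\mathcal{L}_K$. The single-difference contributions coming from $-2\eta^2(x)\nabla u(x)\cdot\mathcal{L}_K\nabla u(x)$ and from the $u^2$--piece will combine with the cross terms of $\mathcal{L}_K(\eta^2|\nabla u|^2)$ and $\sigma\mathcal{L}_K u^2$ exactly as in the scalar proof: the $\sigma$-pieces cancel with $\sigma\int|u(x)-u(y)|^2K$, and what remains is
\begin{equation*}
\int_{\R^n}\Bigl[-\eta^2(x)|\nabla u(x)|^2-\eta^2(y)|\nabla u(y)|^2+2\eta^2(x)\,\nabla u(x)\cdot\nabla u(y)\Bigr]K(x-y)\,dy.
\end{equation*}
Adding and subtracting $2\eta(x)\eta(y)\,\nabla u(x)\cdot\nabla u(y)$ under the integral and invoking the vector polarization identity yields
\begin{equation*}
2\int_{\R^n}\eta(x)\bigl(\eta(x)-\eta(y)\bigr)\nabla u(x)\cdot\nabla u(y)\,K(x-y)\,dy-\int_{\R^n}\bigl|\eta(x)\nabla u(x)-\eta(y)\nabla u(y)\bigr|^2K(x-y)\,dy,
\end{equation*}
which establishes the desired equivalence, since the pointwise inequality in the proposition is the assertion that this quantity is bounded above by $E$.

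There is no real obstacle: the argument is a purely algebraic rearrangement, identical in structure to the one already carried out for Propositions~\ref{SUPERT-Intro} and~\ref{SUPERT}. The only mild care is to verify that every integral makes sense (as a principal value for the first one on each side, as a convergent Lebesgue integral for the others), which is handled exactly as in the comments following Propositions~\ref{SUPERT-Intro} and~\ref{SUPERT}; here it relies on $\eta\in C^\infty(\R^n)\cap L^\infty(\R^n)$, $u\in C^\infty(\R^n)\cap W^{1,\infty}(\R^n)$, the symmetry~\eqref{EVEN}, and the bound~\eqref{KLIM}. As a cross-check of the calculation, one may alternatively apply Proposition~\ref{SUPERT-Intro} with $e=e_i$ and $\sigma/n$ in place of $\sigma$ for $i=1,\dots,n$ and add the resulting \emph{identities} (not merely the implications), since the proof of Proposition~\ref{SUPERT-Intro} proceeds by showing that two explicit quantities coincide; summation immediately reconstructs the vectorial identity above.
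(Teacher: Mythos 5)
Your proposal is correct and follows essentially the same route as the paper's proof of Proposition~\ref{NUOVA:P}: a direct computation expanding each term via the definition of~${\mathcal{L}}_K$, cancellation of the $\sigma$-pieces, and the vector polarization identity $-|a|^2-|b|^2+2a\cdot b=-|a-b|^2$ with $a=\eta(x)\nabla u(x)$, $b=\eta(y)\nabla u(y)$. Your closing remark---that the result can alternatively be recovered by summing the \emph{identity} underlying Proposition~\ref{SUPERT-Intro} over $e=e_1,\dots,e_n$ with $\sigma$ replaced by $\sigma/n$---is a valid cross-check not spelled out in the paper, and correctly exploits the fact that the scalar proof actually establishes an equality, not merely an implication.
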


\begin{proof} We observe that
\begin{eqnarray*}
&& {\mathcal{L}}_K \Big( \eta^2\,|\nabla u|^2+\sigma
\,u^2\Big)(x)
-2\eta^2(x)\, \nabla u(x)\cdot{\mathcal{L}}_K\,\nabla u (x)
\\&&\qquad\qquad-2\sigma\,u(x)\,{\mathcal{L}}_K u(x)+\sigma\int_{\R^n}\big|u(x)-u(y)\big|^2\,K(x-y)\,dy\\
&&\qquad= \int_{\R^n} 
\Big( \eta^2(x)\,|\nabla u(x)|^2
-\eta^2(y)\,|\nabla u(y)|^2\Big)\,K(x-y)\,dy\\&&\qquad\qquad
+\sigma\int_{\R^n} \big(u^2(x)-u^2(y)\big)\,K(x-y)\,dy\\
&&\qquad\qquad-2\eta^2(x)\, \nabla u(x)\cdot
\int_{\R^n}\Big( \nabla u(x)-\nabla u(y)\Big)
\,K(x-y)\,dy\\
&&\qquad\qquad-2\sigma\,u(x)\,\int_{\R^n} \big(u(x)-u(y)\big)\,K(x-y)\,dy
+\sigma\int_{\R^n}\big|u(x)-u(y)\big|^2\,K(x-y)\,dy
\\
&&\qquad= \int_{\R^n}\Big( 2\eta^2(x)\, \nabla u(x)\cdot\nabla u(y)
-\eta^2(x)\, |\nabla u(x)|^2-
\eta^2(y)\, |\nabla u(y)|^2
\Big)\,K(x-y)\,dy
\\ &&\qquad=2\int_{\R^n} \eta(x)\,\big(\eta(x)-\eta(y)\big)\, \nabla u(x)
\cdot\nabla u(y)
\,K(x-y)\,dy
\\&&\qquad\qquad-
\int_{\R^n}\Big|
\eta(x)\, \nabla u(x)-
\eta(y)\, \nabla u(y)
\Big|^2\,K(x-y)\,dy.
\end{eqnarray*}
{F}rom this, the proposition follows readily.
\end{proof}

\section{Existence and regularity of solutions}\label{sec:regularity-new}

In order to keep our arguments as simple as possible, in this paper
the main computations have been performed
assuming that the solution~$u$ is smooth. Note that our auxiliary functions depend on the first or second derivatives of the solution $u$, and that we need the operator to act on them. Thus, in this article we need the solution to be at least $C^{1+2s}$ or $C^{2+2s}$,
respectively. Here we discuss known existence and regularity results for the equations that we cover.

Concerning the existence results
for concave nonlocal fully nonlinear equations (possibly including
also the case of rough kernels), J. Serra proved in
Theorem~1.3
of~\cite{MR3426087}
that if~${\mathcal{B}}$ is a family of indexes
and for all~$B\in{\mathcal{B}}$ the kernel~$K_B$
satisfies the evenness and ellipticity conditions~\eqref{EVEN}
and~\eqref{KLIM},
and~$g_B$
and~$u_0$ are H\"older continuous, with~$u_0$ bounded in~$\R^n\setminus B_1$, then
the problem
$$ \begin{cases}\displaystyle
\sup_{B\in{\mathcal{B}}}\Big(
{\mathcal{L}}_{K_B}u-g_B\Big)=0 & {\mbox{ in }}B_1,\\
u=u_0 & {\mbox{ in }}\R^n\setminus B_1,
\end{cases}$$
admits a unique viscosity solution which is continuous everywhere
and~$C^{2s+\beta}_{\rm loc}(B_1)$ for some~$\beta\in(0,1)$.
This setting applies to equations~\eqref{MUCCI:EQ} and~\eqref{1.23BIS}
presented in Theorems~\ref{MUCCI}
and~\ref{PUCCI TYPE BIS} of this paper.

See also~\cite{MR1081182, MR1081183, MR2400256, MR2735074, MR2911421}
for other existence and regularity results in related (but quite different) fractional settings.

A general approach to regularity is provided by the notion of
elliptic operators in nonlinear integro-differential equations arising from L\'evy processes,
as given by Caffarelli and Silvestre in Definition~3.1 of~\cite{caff-silv}. When our source terms~$g_A$, $g_B$, $g_1,\dots,g_J$ are constants,\footnote{The constancy of the source terms is needed to make the operator translation invariant,
as requested on page~603 of~\cite{caff-silv}.}
this setting
includes 
our Pucci-type equations~\eqref{MUCCI:EQ} and~\eqref{1.23BIS},
as well as
the fully nonlinear framework presented here in~\eqref{EQ} (when all the operators have
the same order)
when~$F:\R^J\to\R$ is differentiable
and~$\partial_{p_j} F\in [C^{-1},C]$ for some~$C\ge1$
and all~$j\in\{1,\dots,J\}$.
Under the assumptions
that the source terms~$g_A$, $g_B$, $g_1,\dots,g_J$ are constants,
that the operators have all the same order and satisfy~\eqref{EVEN},
\eqref{KLIM}, and a bound for~$|\nabla K|$ as in~\eqref{KC1}, in light of Theorem~13.1 in~\cite{caff-silv}
we know that the solutions in Theorems~\ref{MUCCI},
and~\ref{PUCCI TYPE BIS} are locally~$C^{1+\beta}$
for some~$\beta\in(0,1)$.
This also applies to the case dealt with
in Theorem~\ref{THM:1} when all
the nonlocal operators~${\mathcal{L}}_{\mu_j}$
reduce to the fractional Laplacian of some order~$s$ (with the same order~$s$
for all the operators). See also
Theorem~27 of~\cite{MR2781586} for related results
with uniform estimates.

Similarly, in the
case of vanishing source terms~$f$, $g_A$, and~$g_B$,
Theorem~1.1 of~\cite{caff-silv-2} leads to the local~$C^{2s+\beta}$ regularity for equations~\eqref{MUCCI:EQ}
and~\eqref{1.23BIS} of definite order $2s$ ---for
the latter, assuming that all the kernels satisfy
assumptions \eqref{EVEN},
\eqref{KLIM}, and~\eqref{KC1}.
In particular, since~$2s+\beta>2s$,
the corresponding equation is satisfied also in the pointwise sense
(see e.g. 
the comment before Theorem~1.1 in~\cite{caff-silv-2},
and Proposition~2.1.4 of~\cite{MR2707618}
for a detailed proof of this fact).

In any case, the regularity of type~$C^{1+\beta}$
or~$C^{2s+\beta}$ is not sufficient for the techniques discussed
in this paper, and this is the reason for which we are taking additional
regularity assumptions in our main results.

General results dealing with higher regularity for nonlocal
fully nonlinear equations in 
boun-\-ded 
domains need to address 
two difficulties, namely the nonlinearity of the equation and the possible
singularity that external data may induce due to the nonlocal structure of the problem.
For linear equations satisfied in the whole of~$\R^n$
the regularity theory
is well understood, and in this case solutions are $C^\infty$.
Indeed, in this situation
one can conclude via a bootstrap argument that
solutions are smooth even in case of integro-differential operators with
rough kernels,
since one can differentiate the equation without introducing errors that come from rough exterior data
(e.g., one could proceed as in Section~4 of~\cite{MR3744813}
without having to introduce additional cutoff functions).

To understand the effect that being a global solution has
on regularity, one can consider the toy model given by
\begin{equation}\label{TOY1}
F\big( (-\Delta)^s u(x), u(x)\big)=0 \qquad{\mbox{ for all }}x\in\R^n,
\end{equation}
with~$F$ smooth and with first derivatives bounded and bounded away from zero. Assume first
that~$s\in(1/2,1)$.
In this case, once~$u$ belongs to~$C^{2s+\beta}(B_1(x_0))$ for some~$\beta\in(0,1)$ ---a regularity which is known from~\cite{MR3426087}---
and
for all~$x_0\in\R^n$, given a direction~$e$,
one can differentiate~\eqref{TOY1} and obtain
$$ \partial_{p_1} F\big( (-\Delta)^s u(x), u(x)\big)\,(-\Delta)^s u_e(x)
+\partial_{p_2} F\big( (-\Delta)^s u(x), u(x)\big)\, u_e(x)=0\qquad{\mbox{ for all }}x\in\R^n.$$
Therefore
$$ \,(-\Delta)^s u_e(x)=f(x)\qquad{\mbox{ for all }}x\in\R^n,$$
with
$$ G(p_1,p_2):= -
\frac{ \partial_{p_2} F(p_1,p_2) }{
\partial_{p_1} F(p_1,p_2) }$$
and
$$ f(x):=G\big( (-\Delta)^s u(x), u(x)\big)\, u_e(x).$$
Thus, one can apply the local regularity theory for the fractional Laplacian in~$B_{1/2}(x_0)$
(see e.g. Proposition 2.1.11 in~\cite{MR2707618}) and obtain that
\begin{equation}\label{yhgbgfhfgjuF} \| u_e \|_{C^{1+\alpha}(B_{1/2}(x_0))}\le
C\,\Big( \|u_e\|_{L^\infty(\R^n)}+\|f\|_{L^\infty(B_1(x_0))}\Big)\le
C\,\|u_e\|_{L^\infty(\R^n)},\end{equation}
for some~$\alpha\in(0,1)$,
up to renaming~$C$. 
But, since~$s>1/2$,
for every~$x_0\in\R^n$,
$$ |u_e(x_0)|\le \| u\|_{C^1(B_1(x_0))}\le \| u\|_{C^{2s+\beta}(B_1(x_0))}.$$
Hence, we have a uniform bound on~$\|u\|_{C^{2+\alpha}(B_{1/2}(x_0))}$,
for all~$x_0\in\R^n$, and in particular a bound on~$\|D^2 u\|_{L^\infty(\R^n)}$.
Since this is a global bound, one can iterate the procedure and obtain that~$u\in C^\infty(\R^n)$.
When~$s\le1/2$, this method needs
to be modified, by iterating a H\"older regularity result
on the incremental quotients; see e.g.
pages~634-635 in~\cite{caff-silv}.
In both cases, we stress that this procedure only works for global solutions,
since~\eqref{yhgbgfhfgjuF}
requires~$x_0$ to be an arbitrary point in~$\R^n$.

The bootstrap method 
can also be applied to fully nonlinear operators
when the equation is satisfied in a bounded domain with good exterior data;
see Theorem 1.5 in~\cite{MR3579887}
and Theorem~6 in~\cite{MR3331523} to implement the bootstrap
regularity of Schauder type. 

For equations satisfied on bounded domains,
when the operator is built, roughly speaking, by the sum of fractional Laplacians of different orders
that include the classical Laplacian, then the solutions
are typically~$C^\infty$, in view of the regularizing effect of the
higher order operator. In this setting
one can include also some nonlinear terms. A simple example consists of~$u\in C^{2s+\beta}(B_1)$
being a solution of the equation
$$ -\Delta u(x)+\widetilde F\big( (-\Delta)^s u(x)\big)=0\qquad{\mbox{for all }}x\in B_1,$$
with~$\widetilde F\in C^\infty(\R)$.
We then have that the map~$ x\mapsto f(x):=-\widetilde F\big( (-\Delta)^s u(x)\big)$
is locally H\"older continuous
and hence we can apply
the classical Schauder theory to 
obtain that second derivatives of~$u$ are
H\"older continuous in~$B_{1/2}$.
Then, by
bootstrapping, we
conclude that~$u$ has as many derivatives as we wish.
We remark that this setting is a particular case of that in~\eqref{FORM83}
by choosing~$F(p_1,p_2):=p_1+\widetilde F(p_2)$ and~$s_1:=1$.

There are other special situations in which $C^\infty$
solutions in a bounded domain can be constructed, as established in
Theorem~1.1 of~\cite{MR3744813}.
This paper provides cases of nonlocal fully nonlinear
equations (rather concrete ones) whose Dirichlet problems
possess a
unique and
smooth solution. More specifically,
as detailed in Definitions~2.3 and~2.10 in~\cite{MR3744813},
one can consider ``nice weights''
which make the bootstrap regularity compatible with the convex
structure of the equation. This setting provides
smooth solutions for
concave elliptic operators acting
on integral expressions of the form
$$ \int_{\R^n}\big(u(x+y)+u(x-y)-2u(x)\big)\,\frac{y_i\,y_j\,\rho(x,y)}{|y|^{n+2s+2}}\,dy,$$
where~$\rho$ is smooth, bounded, and
bounded away from zero, with derivatives of order~$j$
in the variable~$y$ bounded by~$C_j|y|^{-j}$ for some constant~$C_j$.
In particular, Theorem~1.1 in~\cite{MR3744813}
gives the existence of a $C^\infty$ solution of
$$ \begin{cases}\displaystyle
\int_{\R^n}\big(u(x + y) + u(x - y) - 2u(x)\big)\frac{
\rho(x,y)
}{|y|^{n+2s}}\,dy=f(x) & {\mbox{ for all }}x\in B_1,
\\u=g&{\mbox{ in }}\R^n\setminus B_1,
\end{cases}$$
provided that~$f$ is $C^\infty$,
$g$ is bounded and uniformly continuous, and~$\rho$ is a nice weight
as above.

In general, however, the nonlocal setting is not expected
to always provide $C^\infty$ solutions to general
fully nonlinear equations in bounded domains. This is due to the fact that
the linearized equation exhibits coefficients which
depend on the global data and are in general not better
than H\"older continuous (no matter how regular the solution
is in the interior of the domain). 
As a consequence, the Schauder theory cannot be applied to
bootstrap regularity. As a matter of fact, the higher regularity
theory for fully nonlinear elliptic equations of nonlocal type in bounded domains
is, at the moment,
a field of research still under investigation.
It would be desirable to understand natural assumptions guaranteeing bootstrap regularity of~$ C^\infty$ type.

\end{appendix}

\section*{References}

\begin{biblist}[\normalsize]

\bib{AC04}{article}{
   author={Athanasopoulos, Ioannis},
   author={Caffarelli, Luis},
   title={Optimal regularity of lower dimensional obstacle problems},
   language={English, with English and Russian summaries},
   journal={Zap. Nauchn. Sem. S.-Peterburg. Otdel. Mat. Inst. Steklov.
   (POMI)},
   volume={310},
   date={2004},
   number={Kraev. Zadachi Mat. Fiz. i Smezh. Vopr. Teor. Funkts. 35
   [34]},
   pages={49--66, 226},
   issn={0373-2703},
   translation={
      journal={J. Math. Sci. (N.Y.)},
      volume={132},
      date={2006},
      number={3},
      pages={274--284},
      issn={1072-3374},
   },
   review={\MR{2120184}},
   doi={10.1007/s10958-005-0496-1},
}

\bib{MR1081182}{article}{
   author={Barles, Guy},
   title={A weak Bernstein method for fully nonlinear elliptic equations},
   journal={Differential Integral Equations},
   volume={4},
   date={1991},
   number={2},
   pages={241--262},
   issn={0893-4983},
   review={\MR{1081182}},
}

\bib{MR1081183}{article}{
   author={Barles, Guy},
   title={Interior gradient bounds for the mean curvature equation by
   viscosity solutions methods},
   journal={Differential Integral Equations},
   volume={4},
   date={1991},
   number={2},
   pages={263--275},
   issn={0893-4983},
   review={\MR{1081183}},
}

\bib{MR2911421}{article}{
   author={Barles, Guy},
   author={Chasseigne, Emmanuel},
   author={Ciomaga, Adina},
   author={Imbert, Cyril},
   title={Lipschitz regularity of solutions for mixed integro-differential
   equations},
   journal={J. Differential Equations},
   volume={252},
   date={2012},
   number={11},
   pages={6012--6060},
   issn={0022-0396},
   review={\MR{2911421}},
   doi={10.1016/j.jde.2012.02.013},
}	

\bib{MR2400256}{article}{
   author={Barles, Guy},
   author={Chasseigne, Emmanuel},
   author={Imbert, Cyril},
   title={On the Dirichlet problem for second-order elliptic
   integro-differential equations},
   journal={Indiana Univ. Math. J.},
   volume={57},
   date={2008},
   number={1},
   pages={213--246},
   issn={0022-2518},
   review={\MR{2400256}},
   doi={10.1512/iumj.2008.57.3315},
}

\bib{MR2735074}{article}{
   author={Barles, Guy},
   author={Chasseigne, Emmanuel},
   author={Imbert, Cyril},
   title={H\"{o}lder continuity of solutions of second-order non-linear elliptic
   integro-differential equations},
   journal={J. Eur. Math. Soc. (JEMS)},
   volume={13},
   date={2011},
   number={1},
   pages={1--26},
   issn={1435-9855},
   review={\MR{2735074}},
   doi={10.4171/JEMS/242},
}

\bib{MR3331523}{article}{
   author={Barrios, Bego\~{n}a},
   author={Figalli, Alessio},
   author={Valdinoci, Enrico},
   title={Bootstrap regularity for integro-differential operators and its
   application to nonlocal minimal surfaces},
   journal={Ann. Sc. Norm. Super. Pisa Cl. Sci. (5)},
   volume={13},
   date={2014},
   number={3},
   pages={609--639},
   issn={0391-173X},
   review={\MR{3331523}},
}

\bib{MR1511375}{article}{
   author={Bernstein, Serge},
   title={Sur la g\'en\'eralisation du probl\`eme de Dirichlet},
   language={French},
   journal={Math. Ann.},
   volume={62},
   date={1906},
   number={2},
   pages={253--271},
   issn={0025-5831},
   review={\MR{1511375}},
   doi={10.1007/BF01449980},
}

\bib{MR1511579}{article}{
   author={Bernstein, Serge},
   title={Sur la g\'en\'eralisation du probl\`eme de Dirichlet},
   language={French},
   journal={Math. Ann.},
   volume={69},
   date={1910},
   number={1},
   pages={82--136},
   issn={0025-5831},
   review={\MR{1511579}},
   doi={10.1007/BF01455154},
}

\bib{MR2405856}{article}{
   author={Biswas, Imran H.},
   author={Jakobsen, Espen R.},
   author={Karlsen, Kenneth H.},
   title={Error estimates for a class of finite difference-quadrature
   schemes for fully nonlinear degenerate parabolic integro-PDEs},
   journal={J. Hyperbolic Differ. Equ.},
   volume={5},
   date={2008},
   number={1},
   pages={187--219},
   issn={0219-8916},
   review={\MR{2405856}},
   doi={10.1142/S0219891608001416},
}

\bib{MR3469920}{book}{
   author={Bucur, Claudia},
   author={Valdinoci, Enrico},
   title={Nonlocal diffusion and applications},
   series={Lecture Notes of the Unione Matematica Italiana},
   volume={20},
   publisher={Springer, [Cham]; Unione Matematica Italiana, Bologna},
   date={2016},
   pages={xii+155},
   isbn={978-3-319-28738-6},
   isbn={978-3-319-28739-3},
   review={\MR{3469920}},
   doi={10.1007/978-3-319-28739-3},
}

\bib{CC-paper}{article}{
   author={Cabr\'{e}, Xavier},
   author={Caffarelli, Luis},
   title={Interior $C^{2,\alpha}$ regularity theory for a class of nonconvex
   fully nonlinear elliptic equations},
   language={English, with English and French summaries},
   journal={J. Math. Pures Appl. (9)},
   volume={82},
   date={2003},
   number={5},
   pages={573--612},
   issn={0021-7824},
   review={\MR{1995493}},
   doi={10.1016/S0021-7824(03)00029-1},
}

\bib{MR3485125}{article}{
   author={Cabr\'e, Xavier},
   author={Serra, Joaquim},
   title={An extension problem for sums of fractional Laplacians and 1-D
   symmetry of phase transitions},
   journal={Nonlinear Anal.},
   volume={137},
   date={2016},
   pages={246--265},
   issn={0362-546X},
   review={\MR{3485125}},
   doi={10.1016/j.na.2015.12.014},
}

\bib{MR3165278}{article}{
   author={Cabr\'{e}, Xavier},
   author={Sire, Yannick},
   title={Nonlinear equations for fractional Laplacians, I: Regularity,
   maximum principles, and Hamiltonian estimates},
   journal={Ann. Inst. H. Poincar\'{e} Anal. Non Lin\'{e}aire},
   volume={31},
   date={2014},
   number={1},
   pages={23--53},
   issn={0294-1449},
   review={\MR{3165278}},
   doi={10.1016/j.anihpc.2013.02.001},
}

\bib{CC}{book}{
   author={Caffarelli, Luis},
   author={Cabr{\'e}, Xavier},
   title={Fully nonlinear elliptic equations},
   series={American Mathematical Society Colloquium Publications},
   volume={43},
   publisher={American Mathematical Society, Providence, RI},
   date={1995},
   pages={vi+104},
   isbn={0-8218-0437-5},
   review={\MR{1351007 (96h:35046)}},
}

\bib{caffarelli-silvestre}{article}{
   author={Caffarelli, Luis},
   author={Silvestre, Luis},
   title={An extension problem related to the fractional Laplacian},
   journal={Comm. Partial Differential Equations},
   volume={32},
   date={2007},
   number={7-9},
   pages={1245--1260},
   issn={0360-5302},
   review={\MR{2354493 (2009k:35096)}},
   doi={10.1080/03605300600987306},
}

\bib{caff-silv}{article}{
   author={Caffarelli, Luis},
   author={Silvestre, Luis},
   title={Regularity theory for fully nonlinear integro-differential
   equations},
   journal={Comm. Pure Appl. Math.},
   volume={62},
   date={2009},
   number={5},
   pages={597--638},
   issn={0010-3640},
   review={\MR{2494809 (2010d:35376)}},
   doi={10.1002/cpa.20274},
}

\bib{MR2781586}{article}{
   author={Caffarelli, Luis},
   author={Silvestre, Luis},
   title={Regularity results for nonlocal equations by approximation},
   journal={Arch. Ration. Mech. Anal.},
   volume={200},
   date={2011},
   number={1},
   pages={59--88},
   issn={0003-9527},
   review={\MR{2781586}},
   doi={10.1007/s00205-010-0336-4},
}

\bib{caff-silv-2}{article}{
   author={Caffarelli, Luis},
   author={Silvestre, Luis},
   title={The Evans-Krylov theorem for nonlocal fully nonlinear equations},
   journal={Ann. of Math. (2)},
   volume={174},
   date={2011},
   number={2},
   pages={1163--1187},
   issn={0003-486X},
   review={\MR{2831115}},
   doi={10.4007/annals.2011.174.2.9},
}

\bib{MR2944369}{article}{
   author={Di Nezza, Eleonora},
   author={Palatucci, Giampiero},
   author={Valdinoci, Enrico},
   title={Hitchhiker's guide to the fractional Sobolev spaces},
   journal={Bull. Sci. Math.},
   volume={136},
   date={2012},
   number={5},
   pages={521--573},
   issn={0007-4497},
   review={\MR{2944369}},
   doi={10.1016/j.bulsci.2011.12.004},
}

\bib{MR643158}{article}{
   author={Fabes, Eugene B.},
   author={Kenig, Carlos E.},
   author={Serapioni, Raul P.},
   title={The local regularity of solutions of degenerate elliptic
   equations},
   journal={Comm. Partial Differential Equations},
   volume={7},
   date={1982},
   number={1},
   pages={77--116},
   issn={0360-5302},
   review={\MR{643158}},
   doi={10.1080/03605308208820218},
}

\bib{FR}{article}{
   author={Fern\'{a}ndez-Real, Xavier},
   title={$C^{1,\alpha}$ estimates for the fully nonlinear Signorini
   problem},
   journal={Calc. Var. Partial Differential Equations},
   volume={55},
   date={2016},
   number={4},
   pages={Art. 94, 20},
   issn={0944-2669},
   review={\MR{3523661}},
   doi={10.1007/s00526-016-1034-3},
}

\bib{FR-J}{article}{
   author={Fern\'andez-Real, Xavier},
   author={Jhaveri, Yash},
title={On the singular set in the thin obstacle problem: higher order blow-ups and the very thin obstacle problem},
   journal={Anal. PDE (to appear)},
}

\bib{MR1814364}{book}{
   author={Gilbarg, David},
   author={Trudinger, Neil S.},
   title={Elliptic partial differential equations of second order},
   series={Classics in Mathematics},
   note={Reprint of the 1998 edition},
   publisher={Springer-Verlag, Berlin},
   date={2001},
   pages={xiv+517},
   isbn={3-540-41160-7},
   review={\MR{1814364}},
}

\bib{JakKar}{article}{
   author={Jakobsen, Espen R.},
   author={Karlsen, Kenneth H.},
   title={Continuous dependence estimates for viscosity solutions of
   integro-PDEs},
   journal={J. Differential Equations},
   volume={212},
   date={2005},
   number={2},
   pages={278--318},
   issn={0022-0396},
   review={\MR{2129093}},
   doi={10.1016/j.jde.2004.06.021},
}

\bib{Mou}{article}{
   author={Mou, Chenchen},
   title={Semiconcavity of viscosity solutions for a class of degenerate
   elliptic integro-differential equations in $\Bbb R^n$},
   journal={Indiana Univ. Math. J.},
   volume={65},
   date={2016},
   number={6},
   pages={1891--1920},
   issn={0022-2518},
   review={\MR{3595484}},
   doi={10.1512/iumj.2016.65.5921},
}

\bib{NV}{article}{
   author={Nadirashvili, Nikolai},
   author={Vl\u adu\c t, Serge},
   title={Nonclassical solutions of fully nonlinear elliptic equations},
   journal={Geom. Funct. Anal.},
   volume={17},
   date={2007},
   number={4},
   pages={1283--1296},
   issn={1016-443X},
   review={\MR{2373018}},
   doi={10.1007/s00039-007-0626-7},
}

\bib{MR3482695}{article}{
   author={Ros-Oton, Xavier},
   author={Serra, Joaquim},
   title={Regularity theory for general stable operators},
   journal={J. Differential Equations},
   volume={260},
   date={2016},
   number={12},
   pages={8675--8715},
   issn={0022-0396},
   review={\MR{3482695}},
   doi={10.1016/j.jde.2016.02.033},
}

\bib{MR3426087}{article}{
   author={Serra, Joaquim},
   title={$C^{\sigma+\alpha}$ regularity for concave nonlocal fully
   nonlinear elliptic equations with rough kernels},
   journal={Calc. Var. Partial Differential Equations},
   volume={54},
   date={2015},
   number={4},
   pages={3571--3601},
   issn={0944-2669},
   review={\MR{3426087}},
   doi={10.1007/s00526-015-0914-2},
}

\bib{MR2707618}{book}{
   author={Silvestre, Luis},
   title={Regularity of the obstacle problem for a fractional power of the
   Laplace operator},
   note={Thesis (Ph.D.)--The University of Texas at Austin},
   publisher={ProQuest LLC, Ann Arbor, MI},
   date={2005},
   pages={95},
   isbn={978-0542-25310-2},
   review={\MR{2707618}},
}

\bib{MR1617971}{article}{
   author={Wang, Xu-Jia},
   title={Interior gradient estimates for mean curvature equations},
   journal={Math. Z.},
   volume={228},
   date={1998},
   number={1},
   pages={73--81},
   issn={0025-5874},
   review={\MR{1617971}},
   doi={10.1007/PL00004604},
}

\bib{MR3579887}{article}{
   author={Yu, Hui},
   title={A Dirichlet problem for nonlocal degenerate elliptic operators
   with internal nonlinearity},
   journal={J. Math. Anal. Appl.},
   volume={448},
   date={2017},
   number={1},
   pages={326--346},
   issn={0022-247X},
   review={\MR{3579887}},
   doi={10.1016/j.jmaa.2016.11.005},
}

\bib{MR3744813}{article}{
   author={Yu, Hui},
   title={Smooth solutions to a class of nonlocal fully nonlinear elliptic
   equations},
   journal={Indiana Univ. Math. J.},
   volume={66},
   date={2017},
   number={6},
   pages={1895--1919},
   issn={0022-2518},
   review={\MR{3744813}},
   doi={10.1512/iumj.2017.66.6218},
}

\bib{MR854867}{book}{
   author={Zolotarev, Vladimir M.},
   title={One-dimensional stable distributions},
   series={Translations of Mathematical Monographs},
   volume={65},
   note={Translated from the Russian by H. H. McFaden;
   Translation edited by Ben Silver},
   publisher={American Mathematical Society, Providence, RI},
   date={1986},
   pages={x+284},
   isbn={0-8218-4519-5},
   review={\MR{854867}},
}

\end{biblist}

\bigskip\bigskip

\end{document}